\theoremstyle{plain}
\newtheorem{Thm}{Theorem}[section]
\newtheorem{Cor}[Thm]{Corollary}
\newtheorem{lemma}[Thm]{Lemma}
\newtheorem{Prop}[Thm]{Proposition}
\newtheorem{Def}[Thm]{Definition}
\newtheorem{remark}[Thm]{Remark}
\newtheorem{example}[Thm]{Example}
\newcommand{\abone}{[\alpha_1|\beta_1]}
\newcommand{\abtwo}{[\alpha_2|\beta_2]}
\newcommand{\abhat}{[\hat{\alpha}|\hat{\beta}]}
\newcommand{\ama}{\alpha_1 \wedge \alpha_2 }
\newcommand{\bmb}{\beta_1 \wedge \beta_2 }
\newcommand{\aja}{\alpha_1 \vee \alpha_2 }
\newcommand{\bjb}{\beta_1 \vee \beta_2 }
\newcommand{\talpha}{ \tilde{\alpha}}
\newcommand{\tbeta}{ \tilde{\beta}}
\newcommand{\tg}{ \tilde{G}}
\newcommand{\tk}{ \tilde{K}}
\newcommand{\ts}{ \tilde{S}}
\newcommand{\tpi}{ \tilde{\pi}}
\newcommand{\tkn}{\tau_k(n)}
\newcommand{\tknpr}{\tau'_k(n)}
\newcommand{\tkntknpr}{[\tkn|\tknpr]}
\newcommand{\tjn}{\tau_j(n)}
\newcommand{\tjnpr}{\tau'_j(n)}
\newcommand{\gmg}{\gamma_1 \wedge \gamma_2 }
\newcommand{\gjg}{\gamma_1 \vee \gamma_2 }
\newcommand{\ab}{[\alpha|\beta]}
\newcommand{\abpr}{[\alpha'|\beta']}
\date{\today  }
\title{Split graphs and Block Representations}
\author{Karen L. Collins\\
\small Dept. of Mathematics and Computer Science\\
\small Wesleyan University\\
\small Middletown CT 06459-0128\\
\small\tt kcollins@wesleyan.edu\
\and
Ann N. Trenk\thanks{
This work was supported by a grant from the Simons Foundation (\#426725, Ann Trenk). } \\
\small Department of Mathematics\\
\small Wellesley College\\
\small Wellesley MA 02481\\
\small\tt atrenk@wellesley.edu\
\and
Rebecca Whitman\\
\small Department of Mathematics\\
\small University of California, Berkeley\\
\small Berkeley, CA 94720\\
\small\tt  rebecca\_whitman@berkeley.edu\
}
\begin{document}
\maketitle

\begin{abstract}

In this paper, we study split graphs and related classes of graphs from the perspective of their sequence of vertex degrees and an associated lattice under majorization. Following the work of Merris \cite{Me03}, we define blocks\ $[\alpha(\pi)|\beta(\pi)]$, where $\pi$ is the degree sequence of a graph, and $\alpha(\pi)$ and $\beta(\pi)$ are sequences arising from $\pi$. We use the block representation $[\alpha(\pi)|\beta(\pi)]$ to characterize  membership in each of the following classes:  unbalanced split graphs, balanced split graphs, pseudo-split graphs, and three kinds of  Nordhaus-Gaddum graphs (defined in \cite{CoTr13,ChCoTr16}).  As in \cite{Me03}, we form a poset under the relation majorization
 in which the elements are the blocks $[\alpha(\pi)|\beta(\pi)]$ representing split graphs  with a fixed number of edges.  We partition this poset in several interesting ways  using what we call amphoras, and prove upward and downward closure results for blocks arising from different families of graphs.  Finally, we show that the poset becomes a lattice when a maximum and minimum element are added, and we prove properties of the meet and join of two blocks.

  \end{abstract}
  
  AMS subject classifications: Primary  05C07, 05C17, Secondary:  05A17, 06B99
  \smallskip
  
  Keywords:  split graphs, majorization, degree sequences, integer partitions, amphoras, lattices

\bibliographystyle{plain}

\section{Introduction} 
\label{sec1}

A graph $G$ is a \emph{split graph} if its vertex set can be partitioned as $V(G) = K \cup S$ where the vertices in $K$ form a clique and the vertices in $S$ form a stable set, also known as an independent set.  F\"{o}ldes and Hammer  introduced split graphs in \cite{FoHa77}, and since then they have received considerable attention. Several authors have written book chapters on split graphs, including \cite{Go80,MaPe95, Me01} and, more  recently, \cite{CoTr21}. Hammer and Simeone \cite{HaSi81} proved that the class of split graphs is characterized by degree sequence. For any graph $G$, Merris \cite{Me03} proved that additional features of $G$ can be discerned from its degree sequence $\pi$ by dividing the Ferrers diagram $F(\pi)$  into two regions:  $A(\pi)$, consisting of the \emph{rows} of boxes starting at and including the main diagonal, and $B(\pi)$, consisting of the \emph{columns} of boxes below the main diagonal, and identifying   
the partition $\alpha(\pi)$ whose parts are the number of boxes in the rows of $A(\pi)$, and the partition $\beta(\pi)$ whose parts of the number of boxes in the columns of $B(\pi)$.  Both $\alpha(\pi)$ and $\beta(\pi)$ are partitions into distinct parts. Other authors  use  $\alpha(\pi)$ and $\beta(\pi)$ to determine whether a sequence $\pi$ is graphic, and whether it is the degree sequence of a split graph or a threshold graph.  These results are described in \cite{CoTr21}. 

A \emph{$KS$-partition} of a split graph  is a partition $V(G) = K \cup S$ where $K$ is a clique and $S$ is a stable set.  Let $\omega(G)$ denote the size of a largest clique in $G$ and $\alpha(G)$ denote the size of a largest stable set in $G$.  A $KS$-partition of $G$ is \emph{$K$-max} if $|K| = \omega(G)$ and \emph{$S$-max} if $|S| = \alpha(G)$.  A split graph is \emph{balanced} if it has a $KS$-partition that is simultaneously $K$-max and $S$-max, and \emph{unbalanced} otherwise. A degree sequence of a split graph is said to be balanced if its split graph is balanced and unbalanced if its split graph is unbalanced. We determine whether a split graph with degree sequence $\pi$ is balanced or unbalanced by examining  $\alpha(\pi)$ and $\beta(\pi)$ in Theorem~\ref{unbal-block-thm}, and we describe membership in several classes of graphs related to split graphs in terms of the ordered pairs, $[\alpha(\pi)|\beta(\pi)]$: threshold graphs, pseudo-split graphs and  Nordhaus-Gaddum graphs (NG-graphs are defined in \cite{CoTr13,ChCoTr16}).

%{\color{blue}
The partitions of an integer  form a lattice under the majorization order; see Brylawski \cite{Br73}, and recent work by Ganter \cite{Ga22}, and we describe this in Section~\ref{sec-3}.  Ordered pairs of integer partitions form a poset and we consider the subposet $S$-$Block(n)$ corresponding to 
degree sequences of split graphs with $n$ edges.   We prove upward and downward closure results in $S$-$Block(n)$ for unbalanced and balanced pairs $[\alpha|\beta]$, 

determine  the meet and join of any two  elements of $S$-$Block(n)$, and characterize when these are balanced and unbalanced. We prove that $S$-$Block(n)$ is naturally partitioned into separate unbalanced and balanced subposets.  We call these  amphoras since each  is shaped like an ancient Greek amphora, a ceramic pot designed as an efficient shipping container with a pointed bottom and wider top.  These are  described in Theorem~\ref{partition-thm} and Corollary~\ref{cor-good}, and are shown for $n=10$ in Figures~\ref{fig-s-10} and \ref{fig-W}. Further, we show natural partitions into amphoras for threshold graphs, pseudo-split graphs and NG-graphs. %}

The remainder of this paper is organized as follows.  In Section~2, we provide additional information about balanced and unbalanced split graphs and the Ferrers  diagrams of their degree sequences, as well as  background and preliminary results about threshold graphs, pseudo-split graphs, and three kinds of NG-graphs. Membership of a graph in each of these classes is determined by its degree sequence $\pi$, and in later sections we prove additional results about these classes using $\alpha(\pi)$ and $\beta(\pi)$.    Section~3 provides background on the  relation \emph{majorization} between partitions and  a resulting poset  $Dis_k(n)$, whose elements are the partitions of $n$ into  $k$ distinct parts. In addition, following the work of \cite{Me03},  we combine pairs of partitions into blocks,  and discuss majorization between blocks, resulting in the poset $S$-$Block(n)$.  
 In Section~4, we define an amphora in a poset,  and then 
  show how the poset $S$-$Block(n)$ is naturally partitioned  into amphoras using the categories of balanced, unbalanced and NG-graphs, and also
  show upward and downward closure results.  
    In Section~5, we extend this poset to a lattice by adding a minimum element $\hat{0}$ and a maximum element $\hat{1}$ and investigate properties of the meet and join of two blocks.  Finally, in Section~6, we show how pseudo-split graphs and NG-3 graphs with degree sequence $\pi$ can be characterized by $\alpha(\pi)$ and $\beta(\pi)$.

\section{Background and preliminaries}

\subsection{Ferrers diagrams, balanced and unbalanced split graphs}

Recall that a  split graph is \emph{balanced} if it has a $KS$-partition that is simultaneously $K$-max and $S$-max, and \emph{unbalanced} otherwise.  
 For example, the graph $P_4$ is a balanced split graph while $K_{1,3}$ is unbalanced.   For any  $K$-max partition of an unbalanced split graph, there is a vertex $k \in K$ so that $S \cup \{k\}$ is a stable set, and for any $S$-max partition of an unbalanced split graph, there is a vertex $s \in S$ so that $K \cup \{s\}$ is a clique.  We call such vertices, $k$ or $s$,  \emph{swing vertices}.  For  more details, see \cite{CoTr21}.

Throughout this paper we will assume that any  partition of integers   is written in 
     non-increasing order, so when we write $\pi = (d_1,d_2, \ldots, d_r)$,  it follows that
      $d_1  \ge d_2 \ge \cdots \ge d_r$.   We say the \emph{length} of $\pi$ is $r$ and write  len$(\pi) =r$.  The \emph{Ferrers diagram} $F(\pi)$ is the diagram with $r$ left-justified rows and $d_i$ boxes in row $i$ for $1 \le i \le r$.         We are primarily interested in Ferrers diagrams $F(\pi)$ when $\pi$ is the degree sequence of a graph.  
   We will  ignore isolated vertices in graphs, so our degree sequences will consist of positive integers. 
     Figure~\ref{fig-central-rect}
     shows the Ferrers diagrams  for $\pi_1$ and $\pi_2$, where  $\pi_1 = (6,5,2,2,2,1,1,1)$ and $\pi_2 = (6,4,3,2,2,1,1,1)$.

 A fundamental parameter in recognizing split graphs from degree sequences is the mark, also known as the \emph{modified Durfee number}.  
   If $\pi = (d_1,d_2, \ldots, d_r)$,  the \emph{mark} of $\pi$ is defined as  $m(\pi) = \max \{i:d_i \ge i-1\}$ and we write $m$ for $m(\pi)$ when there is no ambiguity.  Since $d_m \ge m-1$, there is a box in $F(\pi)$ in  row $m$, column $m-1$ and we define the \emph{central rectangle} to be the $m \times (m-1)$ rectangle in the upper left of $F(\pi)$.   By the definition of $m$, we know $d_{m+1} < m$, so the central rectangle  cannot be extended to an $(m+1) \times m$ rectangle
   and thus it provides a visual way to identify $m(\pi)$ from $F(\pi)$.  For $\pi_1 $  and $\pi_2 $ given above,  we have $m(\pi_1) = m(\pi_2)= 3$ and the $3 \times 2$ central rectangles are shown in Figure~\ref{fig-central-rect}. 
  
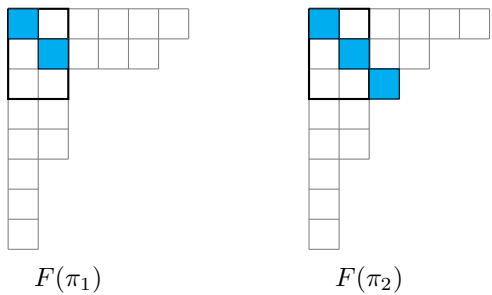
\begin{figure}\begin{center} 
\begin{tikzpicture}[scale=.8]

%\node(0) at (1.5,-.5) {$G_1$};

\draw[gray] (0,0)--(0,4);
\draw[gray]  (.5,0)--(.5,4);
\draw[gray]  (1,1.5)--(1,4);
\draw[gray]  (1.5,3)--(1.5,4);
\draw[gray]  (2,3)--(2,4);
\draw[gray]  (2.5,3)--(2.5,4);
\draw[gray]  (3,3.5)--(3,4);

\draw[gray]  (0,4)--(3,4);
\draw[gray]  (0,3.5)--(3,3.5);
\draw[gray]  (0,3)--(2.5,3);
\draw[gray]  (0,2.5)--(1,2.5);
\draw[gray]  (0,2)--(1,2);
\draw[gray]  (0,1.5)--(1,1.5);
\draw[gray]  (0,1)--(.5,1);
\draw[gray]  (0,0)--(.5,0);
\draw[gray]  (0,.5)--(.5,.5);

\draw[thick, black] (0,4)--(1,4)--(1,2.5)--(0,2.5)--(0,4);
\draw [fill=cyan] (0,3.5) rectangle (.5,4);
\draw [fill=cyan] (.5,3) rectangle (1,3.5);

%\node(0) at (1.5,-.5) {$\pi_1=(6,5,2,2,2,1,1,1)$};
\node(0) at (1,-.5) {$F(\pi_1)$};

\draw[gray] (5,0)--(5,4);
\draw[gray]  (5.5,0)--(5.5,4);
\draw[gray]  (6,1.5)--(6,4);
\draw[gray]  (6.5,2.5)--(6.5,4);
\draw[gray]  (7,3)--(7,4);
\draw[gray]  (7.5,3.5)--(7.5,4);
\draw[gray]  (8,3.5)--(8,4);

\draw[gray]  (5,4)--(8,4);
\draw[gray]  (5,3.5)--(8,3.5);
\draw[gray]  (5,3)--(7,3);
\draw[gray]  (5,2.5)--(6.5,2.5);
\draw[gray]  (5,2)--(6,2);
\draw[gray]  (5,1.5)--(6,1.5);
\draw[gray]  (5,1)--(5.5,1);
\draw[gray]  (5,0)--(5.5,0);
\draw[gray]  (5,.5)--(5.5,.5);

\draw[thick, black] (5,4)--(6,4)--(6,2.5)--(5,2.5)--(5,4);
\draw [fill=cyan] (5,3.5) rectangle (5.5,4);
\draw [fill=cyan] (5.5,3) rectangle (6,3.5);
\draw [fill=cyan] (6,2.5) rectangle (6.5,3);

%\node(0) at (6.5,-.5) {$\pi_2=(6,4,3,2,2,1,1,1)$};
\node(0) at (6, -.5) {$F(\pi_2)$};

\end{tikzpicture}\end{center}

\caption{The Ferrers diagrams and illustration of the central rectangles for $\pi_1 =(6,5,2,2,2,1,1,1)$ and $\pi_2= (6,4,3,2,2,1,1,1)$ }
 \label{fig-central-rect}
\end{figure}

Additional features of a  graph  with degree sequence $\pi$ can be discerned by dividing $F(\pi)$ into two regions:  $A(\pi)$, consisting of the \emph{rows} of boxes starting at and including the main diagonal, and $B(\pi)$, consisting of the \emph{columns} of boxes below the main diagonal.     These regions  in turn lead to two partitions derived from $\pi$, the partition $\alpha(\pi)$ whose parts are the number of boxes in the rows of $A(\pi)$, and  the partition $\beta(\pi)$ whose parts of the number of boxes in the columns of $B(\pi)$.    In Figure~\ref{fig-central-rect}, the boxes in on the main diagonal are shaded, and  $\alpha(\pi_1) = (6,4)$,  $\beta(\pi_1) = (7,3) $,  $\alpha(\pi_2) = (6,3,1)$,  and  $\beta(\pi_2) = (7,3) $.

By construction,   $\alpha(\pi)$ and  $\beta(\pi)$ are partitions into distinct parts.    As the following theorem shows, the length of $\beta(\pi)$  always equals $m(\pi) - 1$ while the 
   length of $\alpha(\pi)$ depends on whether or not there is a box on the main  diagonal of row $m$ of $F(\pi)$.  Both possibilities are illustrated in Figure~\ref{fig-central-rect};   len$(\alpha(\pi_1)) = m(\pi_1) -1$ and len$(\alpha(\pi_2)) = m(\pi_2) $. 
 Theorem~\ref{length-alpha} appears in \cite{CoTr21}.
 
   \begin{Thm} 
 If  $\pi$ is a partition  and $m = m(\pi)$ then    ${\rm len}(\beta(\pi)) = m-1$.  Furthermore, either ${\rm len}(\alpha(\pi)) = { \rm len}(\beta(\pi)) = m - 1$, or ${\rm len}(\alpha(\pi)) = {\rm len}(\beta(\pi)) + 1 = m.$
 \label{length-alpha}
 \end{Thm}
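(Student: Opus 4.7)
The plan is to count directly, using the definition of $m$ together with the non-increasing property of $\pi$, which rows contribute a positive part to $\alpha(\pi)$ and which columns contribute a positive part to $\beta(\pi)$.

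First I would handle $\beta$. Column $j$ of $F(\pi)$ contains at least one box strictly below the main diagonal iff some $i > j$ satisfies $d_i \geq j$, which (since $\pi$ is non-increasing) is equivalent to $d_{j+1} \geq j$. Substituting $i = j+1$ into the defining inequalities $d_i \geq i-1$ for $i \leq m$ and $d_{m+1} < m$, this holds precisely when $j \leq m-1$. Hence columns $1, 2, \ldots, m-1$ each contribute a positive part and no other column does, giving ${\rm len}(\beta(\pi)) = m-1$.

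Next I would handle $\alpha$. Row $i$ contributes a positive part to $\alpha(\pi)$ iff row $i$ of $F(\pi)$ contains its diagonal box $(i,i)$, i.e., $d_i \geq i$. Because $d_m \geq m-1$ by definition of $m$, exactly two cases arise. If $d_m \geq m$, then for all $i \leq m$ we have $d_i \geq d_m \geq m \geq i$, while $d_{m+1} < m < m+1$, so rows $1, \ldots, m$ contribute and ${\rm len}(\alpha(\pi)) = m$. If $d_m = m-1$, then row $m$ fails to contribute, but for $i \leq m-1$ we still have $d_i \geq d_m = m-1 \geq i$, giving ${\rm len}(\alpha(\pi)) = m-1$. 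These two subcases yield exactly the two alternatives claimed.

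No single step is a real obstacle; the only care needed is in translating cleanly between the geometric ``box count'' picture of $F(\pi)$ and the arithmetic condition $d_i \geq i-1$ that defines $m$. One should also note in passing that every contributing row (resp.\ column) produces a strictly decreasing sequence of part-sizes, so that $\alpha(\pi)$ and $\beta(\pi)$ really are partitions into distinct parts, but this is immediate from the staircase shape of $F(\pi)$: successive rows of $A(\pi)$ begin one column further right and are no longer than the previous row, and similarly for $B(\pi)$ down the columns.
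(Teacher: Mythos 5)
Your proof is correct and follows essentially the same route as the paper's: both arguments read off from $d_m \ge m-1$ and $d_{m+1} < m$ that columns $1,\ldots,m-1$ are exactly those contributing to $\beta(\pi)$, and then split on whether the diagonal box in row $m$ is present (your condition $d_m \ge m$ versus $d_m = m-1$). Your version simply spells out the arithmetic of which rows and columns contribute where the paper argues more tersely from the two boundary boxes of the Ferrers diagram.
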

 
 \begin{proof}
  By the definition of the mark $m$, we know $d_m \ge m-1$ and $d_{m+1} < m$, so there is a box in row $m$,  column $m-1$ of $F(\pi)$ and no box in row $m+1$, column $m$ of $F(\pi)$.  Thus ${\rm len}(\beta(\pi)) = m-1$ and  ${\rm len}(\alpha(\pi)) \le m$.    If there is a box in row $m$, column $m$  of $F(\pi)$, then ${\rm len}(\alpha(\pi)) = {\rm len}(\beta(\pi)) + 1 = m$, and otherwise, ${\rm len}(\alpha(\pi)) = { \rm len}(\beta(\pi)) = m - 1$.
 \end{proof}

If $\pi$ is the degree sequence of a split graph $G$, then the two possibilities described in Theorem~\ref{length-alpha}
also determine whether $G$ is balanced or unbalanced.

 \begin{Thm}  If $G$ is a split graph   with degree sequence $\pi$ then $G$ is unbalanced when $\alpha(\pi)$ and $\beta(\pi)$ have the same length and balanced otherwise.
  \label{unbal-block-thm}
 \end{Thm}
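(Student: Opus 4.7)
The plan is to reduce the length-comparison between $\alpha(\pi)$ and $\beta(\pi)$ to a degree condition on the pivot vertex $v_m$, and then read off balanced vs.\ unbalanced from the standard clique/stable-set structure of a split graph.

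First, I would recall (or re-derive from the proof of Theorem~\ref{length-alpha}) that the two cases ${\rm len}(\alpha(\pi)) = {\rm len}(\beta(\pi)) = m-1$ and ${\rm len}(\alpha(\pi)) = m = {\rm len}(\beta(\pi)) + 1$ correspond precisely to the absence (resp.\ presence) of a box in row $m$, column $m$ of $F(\pi)$; equivalently, to $d_m = m-1$ versus $d_m \ge m$. Next, I would invoke the Hammer--Simeone structure (cf.\ \cite{CoTr21,HaSi81}): if $V(G) = \{v_1,\dots,v_n\}$ is ordered so that $\deg(v_i) = d_i$, then $K = \{v_1,\dots,v_m\}$ is a clique and $S = \{v_{m+1},\dots,v_n\}$ is a stable set. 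Moreover $|K| = \omega(G)$, since the definition of the mark forces $d_{m+1} \le m - 1$, ruling out any clique of size $m+1$.

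The two cases then go as follows. If $d_m = m-1$, then all $m-1$ neighbors of $v_m$ lie in $K$, so $v_m$ has no neighbor in $S$. Hence $S \cup \{v_m\}$ is a stable set of size $n-m+1 > |S|$, showing that $(K,S)$ is $K$-max but not $S$-max, with $v_m$ a swing vertex, so $G$ is unbalanced. If $d_m \ge m$, then for every $v_i \in K$ we have $d_i \ge d_m \ge m$, while exactly $m-1$ of those neighbors lie in $K$, so each $v_i \in K$ has at least one neighbor in $S$. Suppose for contradiction that some stable set $S'$ has $|S'| > n-m$. Since $K$ is a clique, $S'$ contains at most one vertex of $K$; combined with $|S'| \ge n-m+1$, this forces $S \subseteq S'$ and $S' = S \cup \{v_i\}$ for some $v_i \in K$. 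But $v_i$ has a neighbor in $S$, contradicting stability. Thus $|S| = \alpha(G)$, so $(K,S)$ is simultaneously $K$-max and $S$-max, and $G$ is balanced.

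The only non-trivial step is verifying $S$-maxness in the case $d_m \ge m$; this is the main obstacle, but the pigeonhole argument above (using that $K$ is a clique and that every $v_i \in K$ has a neighbor in $S$) handles it cleanly. The unbalanced case is essentially immediate from exhibiting $v_m$ as an explicit swing vertex.
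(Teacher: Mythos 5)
Your proof is correct, but it takes a genuinely different route from the paper's. Both arguments begin identically, translating the length dichotomy of Theorem~\ref{length-alpha} into the degree condition $d_m = m-1$ versus $d_m \ge m$. At that point the paper simply cites the known criterion from \cite{ChCoTr16} (with a direct proof in \cite{CoTr21}) that a split graph is unbalanced if $d_m = m-1$ and balanced if $d_m > m-1$, and is done in two lines. You instead re-prove that criterion from scratch: you invoke the Hammer--Simeone structure \cite{HaSi81} to get the partition $K = \{v_1,\dots,v_m\}$, $S = \{v_{m+1},\dots,v_{|V(G)|}\}$ with $|K| = \omega(G)$, exhibit $v_m$ as a swing vertex when $d_m = m-1$, and run a pigeonhole argument to certify $S$-maxness when $d_m \ge m$. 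What your version buys is self-containment --- the reader needs no external balanced/unbalanced characterization --- at the cost of reproducing a result the paper deliberately outsources. The supporting counts ($\omega(G) = m$ because $d_{m+1} \le m-1$; each $v_i \in K$ has a neighbor in $S$ since only $m-1$ of its at least $m$ neighbors can lie in $K$; and the argument that a stable set of size exceeding $|S|$ would have to be $S \cup \{v_i\}$ for some $v_i \in K$) are all sound.

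One step in the unbalanced case should be made explicit. Showing that the particular partition $(K,S)$ is $K$-max but not $S$-max does not by itself establish that \emph{no} $KS$-partition is simultaneously $K$-max and $S$-max, which is what ``unbalanced'' requires by definition. The fix is one line: you have shown $\omega(G) = m$ and $\alpha(G) \ge |V(G)| - m + 1$, so $\omega(G) + \alpha(G) \ge |V(G)| + 1$, whereas every $KS$-partition satisfies $|K| + |S| = |V(G)|$; hence no partition can attain both maxima.
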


 \begin{proof}   
 Let $G$ be a split graph with degree sequence $\pi = (d_1,d_2, \ldots, d_r)$ and let $m = m(\pi)$.  In \cite{ChCoTr16},  it is shown that a  split graph $G$ is unbalanced if $d_m = m-1$ and balanced if $d_m > m-1$, and  a  short and direct proof of this appears in \cite{CoTr21}.   Hence if  $G$ is unbalanced, then $d_m = m-1$, so len$(\alpha(\pi) ) = m-1 = $ len$(\beta(\pi))$ using Theorem~\ref{length-alpha}.  If $G$ is balanced, then $d_m > m-1$, so len$(\alpha(\pi) ) > m-1 = $ len$(\beta(\pi))$.  
 \end{proof}

 As a result of Theorem~\ref{unbal-block-thm}, we can determine whether a split graph  with degree sequence $\pi$ is balanced or unbalanced by examining  $\alpha(\pi)$ and $\beta(\pi)$.   
Analogous results appear in Theorems~\ref{NG-block-thm} and  \ref{NG3-thm} and Corollary~\ref{pseudo-charac}.

\subsection{Threshold graphs, NG-graphs, and pseudo-split graphs}
 \label{sec-2}

Threshold graphs play a special role in the  poset $S$-$Block(n)$  which we will define in Section~\ref{sec-4}.
The class of threshold graphs was introduced by Chv\'{a}tal and Hammer %\cite{ChHa73} 
 and studied extensively in \cite{MaPe95}.  
A graph $G=(V,E)$ is a \emph{threshold graph} if there exist a threshold $t > 0$ and a positive weight $a_i$ assigned to each vertex $v_i\in V$, so that $S\subseteq V$ is a stable set of $G$ if and only if $\sum_{i \in S}a_i \le t$.

Split graphs and threshold graphs have similar forbidden subgraph characterizations.  A graph $G$ is a split graph if and only if it   contains none   of $2K_2$, $C_4$, or $C_5$ as an  induced subgraph \cite{FoHa77}.  A graph $G$ is a threshold graph if and only if it contains none of 
$2K_2$, $C_4$,  or $P_4$ as an induced subgraph \cite{ChHa77}.  Since $P_4$ is an induced subgraph of $C_5$, it immediately follows that  all threshold graphs are split graphs.   It is natural to ask whether they are balanced or unbalanced.  Theorem~\ref{P4-thm} shows that all balanced split graphs contain an induced $P_4$, and in Corollary~\ref{thresh-unbal-cor},
we conclude that threshold graphs are unbalanced split graphs.

\begin{Thm}
Balanced split graphs contain an induced $P_4$.
\label{P4-thm}
\end{Thm}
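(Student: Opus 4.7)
The plan is to argue by contradiction: suppose $G$ is a balanced split graph admitting a $KS$-partition $V(G) = K \cup S$ that is simultaneously $K$-max and $S$-max but contains no induced $P_4$, and then derive a contradiction. Two preliminary observations will do all the work. First, every $k \in K$ must have at least one neighbor in $S$, otherwise $S \cup \{k\}$ would be a larger stable set, violating $S$-max. Second, every $s \in S$ must have at least one non-neighbor in $K$, otherwise $K \cup \{s\}$ would be a larger clique, violating $K$-max.

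Next I would pin down the only shape an induced $P_4$ can take in a split graph. A brief case analysis on how the four vertices split between $K$ and $S$ shows that three or more in $K$ is impossible (the clique forces edges where the $P_4$ forbids them), three or more in $S$ is impossible (the $P_4$ has independence number $2$), and the two-in-$K$ configurations other than having the $K$-vertices be the middle pair fail as well (either two of the $K$-vertices would end up non-adjacent, or two of the $S$-vertices would end up adjacent). So any induced $P_4$ in a split graph must have the form $s' - k_0 - k_1 - s_0$ with $k_0, k_1 \in K$ and $s', s_0 \in S$, and it suffices to produce such a configuration.

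The heart of the argument is to use the no-$P_4$ hypothesis to force a nesting of $S$-neighborhoods. For each $k \in K$ let $N_S(k)$ denote its set of $S$-neighbors. If two such sets $N_S(k_0)$ and $N_S(k_1)$ were incomparable under inclusion, then choosing $s' \in N_S(k_0) \setminus N_S(k_1)$ and $s_0 \in N_S(k_1) \setminus N_S(k_0)$ would yield exactly the forbidden $P_4$ $s' - k_0 - k_1 - s_0$, since $k_0 k_1$ is automatically an edge of $K$ and $s' s_0$ automatically a non-edge of $S$. Hence the family $\{N_S(k) : k \in K\}$ is a chain under $\subseteq$. Let $k^* \in K$ be such that $N_S(k^*)$ is the minimum of this chain; by the first preliminary observation $N_S(k^*) \neq \emptyset$, so fix any $s^* \in N_S(k^*)$. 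By the chain property $s^* \in N_S(k)$ for every $k \in K$, so $s^*$ is adjacent to every vertex of $K$, directly contradicting the second preliminary observation.

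The main obstacle I anticipate is the form reduction for $P_4$ inside a split graph: once one sees that every induced $P_4$ must look like $s' - k_0 - k_1 - s_0$, the remainder is a short implication combining the $K$-max and $S$-max conditions with a minimal element of a chain.
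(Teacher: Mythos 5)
Your proof is correct and rests on the same key observation as the paper's: two clique vertices whose $S$-neighborhoods are incomparable span an induced $P_4$ together with witnesses of the incomparability, while $S$-maxness forbids empty $S$-neighborhoods and $K$-maxness forbids an $S$-vertex adjacent to all of $K$. The paper runs this directly (a minimum-degree vertex of $K$ plus a degree comparison produces the incomparable pair explicitly), whereas you run it contrapositively via the chain of $S$-neighborhoods and its minimum element; the difference is organizational rather than substantive, and your paragraph classifying the possible shapes of a $P_4$ in a split graph is not actually needed, since you only use that your exhibited configuration is an induced $P_4$.
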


\begin{proof}   Let $G$ be a balanced split graph and $V = K \cup S$ be its unique $KS$-partition.    For vertices $v \in K$, we denote the set of vertices in $S$ that are adjacent to $v$ by  $N_S(v)$.    Let $x$ be a vertex of $K$ with minimum degree.  If $N_S(x) = \emptyset$ then $x$ is a swing vertex, a contradiction because $G$ is balanced.  Thus $N_S(x) \neq \emptyset$ and there exists $z \in N_S(x)$.  Since $z$ is not a swing vertex, there exists a vertex $y \in K$ that is not adjacent to $z$, and thus $z \in N_S(x) - N_S(y)$.  However,     $deg(x) \le deg(y)$, so  there exists $w \in N_S(y) - N_S(x)$.  Now vertices  $x,y,z,w$ induce a path $P_4$ in $G$ as desired. 
\end{proof}

\begin{Cor}
Threshold graphs are unbalanced split graphs.
\label{thresh-unbal-cor}
\end{Cor}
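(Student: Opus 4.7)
The plan is to deduce this as an immediate contrapositive consequence of Theorem~\ref{P4-thm}, using the forbidden subgraph characterizations already stated in the excerpt. The key observation is that threshold graphs are, by definition of the forbidden subgraph list $\{2K_2, C_4, P_4\}$, $P_4$-free, whereas Theorem~\ref{P4-thm} guarantees that every balanced split graph must contain an induced $P_4$. These two facts are incompatible, so no threshold graph can be balanced.

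More concretely, I would proceed as follows. First, I would recall from the previous discussion that every threshold graph is a split graph; this was already observed in the excerpt from the fact that $P_4$ is an induced subgraph of $C_5$, so forbidding $\{2K_2, C_4, P_4\}$ automatically forbids $\{2K_2, C_4, C_5\}$. Thus any threshold graph $G$ is in particular a split graph and so is either balanced or unbalanced. Second, I would invoke Theorem~\ref{P4-thm}: if $G$ were a balanced split graph, then $G$ would contain an induced $P_4$. But since $G$ is threshold, $G$ contains no induced $P_4$, a contradiction. Therefore $G$ must be unbalanced, which is what the corollary asserts.

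There is essentially no obstacle here since both ingredients are in place; the proof amounts to a one-line contrapositive argument. The only minor thing to be careful about is to explicitly note the containment \emph{threshold $\subseteq$ split}, so that the dichotomy balanced/unbalanced applies to $G$ at all — but this is already spelled out in the paragraph preceding Theorem~\ref{P4-thm}, so it can be cited rather than reproved.
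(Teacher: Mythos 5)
Your proof is correct and follows exactly the same route as the paper: cite the forbidden-subgraph characterizations to get threshold $\subseteq$ split, then use Theorem~\ref{P4-thm} together with $P_4$-freeness of threshold graphs to rule out the balanced case. No gaps.
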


\begin{proof}
As noted above, it follows from the forbidden graph characterizations that all threshold graphs are split graphs.  By 
  Theorem~\ref{P4-thm}, all balanced split graphs contain an induced $P_4$, and therefore threshold graphs cannot be balanced split graphs, so they must be unbalanced split graphs.
\end{proof}

 An alternate proof of Corollary~\ref{thresh-unbal-cor} can be obtained by combining 
  Theorems~\ref{unbal-block-thm}, \ref{split-S-block-thm} and  \ref{thresh-S-block-thm}.
  As we will see in Section~\ref{sec-4}, threshold graphs play a fundamental role as the maximal elements  in the split block poset.  

% \subsection{NG-graphs and pseudo-split graphs}
% \label{NG-sec}
A well-known  theorem by Nordhaus and Gaddum \cite{NoGa56}  relates the chromatic number of any  graph $G$  and its complement $\overline{G}$  to the  number of vertices:     
$$ 2 \sqrt{|V(G)|}   \leq \chi(G) + \chi(\bar{G}) \leq  |V(G)| + 1.$$   We call $G$ a {\it Nordhaus-Gaddum graph} or {\it NG-graph}  if $ \chi(G) + \chi(\bar{G})  =   |V(G)| + 1.$
%$G$ satisfies the maximum value of this inequality; i.e.,   $ \chi(G) + \chi(\bar{G})  =   |V(G)| + 1.$  
Finck \cite{Fi66}  and Starr and Turner \cite{StTu08} provide  two different  characterizations of NG-graphs.   Collins and Trenk  \cite{CoTr13}  define  the ABC-partition of a graph and characterize  NG-graphs in terms of this partition.

\begin{Def}\rm 
For a graph $G$, the  \emph{ABC-partition} of $V(G)$  (or of $G$) is  

 $A_G = \{v \in V(G): deg(v) = \chi(G) -1\}$
 
 $B_G = \{v \in V(G): deg(v) > \chi(G) -1\}$
 
 $C_G = \{v \in V(G): deg(v) < \chi(G) -1\}$.   
 
 When it is unambiguous, we write $A=A_G$, $B=B_G$, $C=C_G$.
  
\label{ABC-def}
\end{Def}

 For a graph $G$ and any subset of vertices $V' \subseteq V(G)$, let $G[V']$ denote the subgraph induced  in $G$ by $V'$.  The following theorem  from \cite{CoTr13} characterizes NG-graphs using ABC-partitions.

\begin{Thm}  {\rm (Collins and Trenk \cite{CoTr13})}
 A graph $G$  is an NG-graph if and only if its ABC-partition satisfies
  
 (i) $A \neq \emptyset$ and  $G[A]$ is a clique, a stable set, or a 5-cycle
 
 (ii) $G[B]$ is a clique
 
 (iii)  $G[C]$ is a stable set
 
 (iv)  $uv \in E(G)$ for all $u \in A$, $v \in B$ 
 
 (v)  $uw \not\in E(G)$ for all $u \in A$, $w \in C$.
 
\label{NG-charac}
\end{Thm}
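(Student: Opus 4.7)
The statement is an equivalence, so my plan is to prove both directions separately, with the structural analysis driven by the relationship between vertex degrees and $\chi(G)$ that defines the partition $V(G) = A \cup B \cup C$.

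For the reverse direction ($\Leftarrow$), I would assume conditions (i)--(v) and compute $\chi(G)$ and $\chi(\overline{G})$ directly. Conditions (ii) and (iv) force $B$ together with any single vertex of $A$ to form a clique of size $|B|+1$, so $\chi(G) \ge |B|+1$, while condition (i) fixes the extra contribution coming from $G[A]$: a clique on $A$ adds $|A|-1$ further colors, a stable set adds none, and a $5$-cycle adds $2$. By (v), the vertices of $C$ have no neighbors in $A$, and by (iii) they form a stable set among themselves, so each $c \in C$ competes only with its neighbors in $B$; since $\deg(c) < \chi(G)-1$, such $c$ fits into an existing color class without forcing a new color. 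Hence $\chi(G)$ is read off from the structure of $A \cup B$ alone. The symmetric analysis on $\overline{G}$, using that $\overline{C_5} \cong C_5$ and that cliques and stable sets swap roles under complementation, yields an analogous formula for $\chi(\overline{G})$. A short arithmetic check in each of the three subcases for $G[A]$ then produces $\chi(G)+\chi(\overline{G}) = |V(G)|+1$.

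For the forward direction ($\Rightarrow$), I would assume the NG equality and exploit its tightness. Starting from an optimal $\chi(G)$-coloring of $G$ together with an optimal coloring of $\overline{G}$, I would argue that any two vertices of $B$ must be adjacent in $G$: otherwise a local recoloring merging their color classes in either $G$ or $\overline{G}$ would contradict the extremality of the sum, giving (ii) and (iv). The analogous argument on $\overline{G}$, where low-degree vertices of $G$ become high-degree vertices of $\overline{G}$, produces (iii) and (v). Non-emptiness of $A$ then follows from a counting argument: if $A = \emptyset$ then $V(G)$ splits as $B \cup C$ into a pure clique and a pure stable set with a restricted interface, and a direct computation gives $\chi(G)+\chi(\overline{G}) \le |V(G)|$, contradicting the hypothesis.

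The main obstacle is completing (i): once $B$, $C$, and the $A$-$B$, $A$-$C$ interfaces are fixed, showing that $G[A]$ must be exactly a clique, a stable set, or a $5$-cycle, and nothing else like $P_4$, $C_4$, or a longer induced path or cycle, requires simultaneously optimizing colorings of $G$ and $\overline{G}$. I would expect the proof to proceed by a swap or exchange argument on the connected components of $G[A]$, showing that any induced subgraph of $G[A]$ outside the three listed types admits a recoloring that strictly improves either $\chi(G)$ or $\chi(\overline{G})$ without worsening the other, contradicting the NG equality. This rigidity of $G[A]$ — and in particular the isolated appearance of $C_5$ in an otherwise clique/stable-set dichotomy — is the delicate heart of the characterization.
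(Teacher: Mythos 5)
The paper does not prove this theorem; it is imported verbatim from \cite{CoTr13}, so there is no internal proof to compare against. Judged on its own terms, your reverse direction is essentially sound: conditions (ii)--(v) pin down $\chi(G)=\chi(G[A\cup B])$ (a $\chi(G)$-critical subgraph has minimum degree at least $\chi(G)-1$, so it avoids $C$), the complementary computation gives $\chi(\overline{G})=\chi(\overline{G}[A\cup C])$, and the three subcases for $G[A]$ each sum to $|V(G)|+1$. That half can be completed along the lines you describe.

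The forward direction, however, has a genuine gap. First, the mechanism you propose for (ii) and (iv) --- that two nonadjacent vertices of $B$ would permit ``a local recoloring merging their color classes'' --- is not a valid argument: merging two color classes requires the absence of \emph{all} edges between the classes, not just between two representatives, and in general one cannot certify $\chi(G)+\chi(\overline{G})<|V(G)|+1$ by a local exchange. Second, and more seriously, the classification of $G[A]$ is not an afterthought to be handled by an unspecified swap argument; it is where the real content lies. The route that actually works is: once (ii)--(v) are in hand, every $v\in A$ has degree $|B|+\deg_{G[A]}(v)=\chi(G)-1$, so $G[A]$ is regular; the equality $\chi(G)+\chi(\overline{G})=|V(G)|+1$ then descends (via $\chi(G)=|B|+\chi(G[A])$ and $\chi(\overline{G})=|C|+\chi(\overline{G[A]})$) to $\chi(G[A])+\chi(\overline{G[A]})=|A|+1$, so $G[A]$ is itself a \emph{regular} NG-graph; and one must then prove separately that the only regular NG-graphs are complete graphs, edgeless graphs, and $C_5$ --- for instance by noting that $\chi(H)\le\Delta(H)+1$ applied to both $H=G[A]$ and $\overline{H}$ forces equality in both bounds, after which Brooks' theorem does the work. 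None of this machinery appears in your sketch, and without it the isolated appearance of $C_5$ --- which you correctly flag as the delicate point --- remains unexplained. The standard treatments (Finck's recursive characterization, or the vertex-deletion induction behind the Nordhaus--Gaddum upper bound, in which equality forces $G-v$ to be an NG-graph for every $v$) supply the leverage that your forward direction is missing.
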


By (i) of Theorem~\ref{NG-charac}, there are  three possible forms of an NG-graph, and they are defined as follows.  % (See Figure~\ref{NG-fig}.) 

\begin{Def} \rm A graph $G$  is an NG-1 graph if $G[A]$ is a clique, an  NG-2 graph if  $G[A]$ is a stable set, and an NG-3 graph if  $G[A]$ is a 5-cycle.  \label{NG-123-def}
\end{Def}

It is shown in \cite{ChCoTr16} that if $G$ is  any NG-graph then $\chi(G) = m(G)$.  This leads to the following remark.

\begin{remark}
If $G$ is an NG-graph, then we may replace $\chi(G)$ by $m(G)$ in its ABC-partition.
\label{m-ABC-rem}
\end{remark}

Note that  NG-3 graphs are not split graphs because they contain a 5-cycle.
However, NG-1 graphs (resp. NG-2 graphs) are split graphs with $KS$-partition $K = A \cup B$, $S = C$ (resp. $K = B$, \ $S = A \cup C$).    Indeed, they are \emph{unbalanced} split graphs  because the vertices in the non-empty $A_G$ are swing vertices by conditions    (iv) and (v) of Theorem~\ref{NG-charac}.  Conversely, it is shown in \cite{ChCoTr16} and, later, in Corollary~\ref{NG-cor}
 that every unbalanced split graph is an NG-1 graph or an NG-2 graph (or both).  We record this equivalence below in Proposition~\ref{unbal-NG-prop}.
\begin{Prop}
The class of unbalanced split graphs is the union of the classes of NG-1 graphs and NG-2 graphs.
\label{unbal-NG-prop}
\end{Prop}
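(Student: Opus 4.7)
My plan is to prove the set equality by two containments, noting that the forward inclusion (that NG-1 and NG-2 graphs are unbalanced split graphs) is the substantive content for the present proof, while the reverse inclusion can be deferred to Corollary~\ref{NG-cor} later in the paper.

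First, I would extract a $KS$-partition of $G$ from the ABC-partition using Theorem~\ref{NG-charac}. If $G$ is an NG-1 graph, then $K := A \cup B$ is a clique (by parts (i), (ii), (iv)) and $S := C$ is stable (by (iii)). If $G$ is an NG-2 graph, then $K := B$ is a clique (by (ii)) and $S := A \cup C$ is stable (by (i), (iii), (v)). In either case, $G$ is a split graph.

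Second, I would verify that $G$ is unbalanced by appealing to Theorem~\ref{unbal-block-thm}: it suffices to show $d_m = m-1$, where $m = m(\pi)$. Since $G$ is a split graph, $\chi(G) = \omega(G)$ by a standard greedy-coloring argument on a $K$-max partition, and by Remark~\ref{m-ABC-rem} we also have $m(G) = \chi(G)$. Computing the clique number in each case, I would obtain $\omega(G) = |A| + |B|$ in the NG-1 case, using (v) to block a larger clique that would have to enter $C$, and $\omega(G) = |B| + 1$ in the NG-2 case, using that $A$ is a stable set. Writing the degree sequence in non-increasing order, the first $|B|$ entries come from $B$ (each greater than $m-1$) and the next $|A|$ entries come from $A$ (each equal to $m-1$ by the definition of $A$), so the $m$-th entry is an $A$-vertex and $d_m = m-1$.

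The main obstacle is the case analysis in computing $\omega(G)$: in NG-1 the $A$-vertices sit inside $K$, while in NG-2 they sit inside $S$, and one must use condition (v) of Theorem~\ref{NG-charac} carefully to rule out clique extensions through $C$. Once these clique-number counts are in place, the degree-sequence bookkeeping is routine and Theorem~\ref{unbal-block-thm} yields that $G$ is unbalanced. For the reverse containment, which requires producing an ABC-partition satisfying (i)--(v) from an unbalanced split graph and showing $G[A]$ is a clique or a stable set, I would defer to Corollary~\ref{NG-cor}.
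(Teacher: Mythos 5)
Your proposal is correct, and the first half (extracting the $KS$-partition $K=A\cup B$, $S=C$ for NG-1 and $K=B$, $S=A\cup C$ for NG-2, then deferring the reverse containment to Corollary~\ref{NG-cor}) matches the paper exactly; that deferral is not circular, since Corollary~\ref{NG-cor} only uses the forward inclusion of this proposition. Where you genuinely diverge is in certifying that the graphs are \emph{unbalanced}. The paper does this in one line with the swing-vertex definition: by conditions (iv) and (v) of Theorem~\ref{NG-charac}, every vertex of the non-empty set $A_G$ can be moved across the $KS$-partition (in NG-1 each $a\in A$ has no neighbor in $S=C$, in NG-2 each $a\in A$ is complete to $K=B$), so the partition cannot be simultaneously $K$-max and $S$-max. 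You instead route through the degree sequence: compute $\omega(G)=|A|+|B|$ (NG-1) or $\omega(G)=|B|+1$ (NG-2), identify $\omega(G)=\chi(G)=m(\pi)$ via perfection of split graphs and Remark~\ref{m-ABC-rem}, locate the $m$-th entry of the degree sequence inside the block of $A$-vertices, conclude $d_m=m-1$, and invoke Theorem~\ref{unbal-block-thm}. Your clique-number computations check out (condition (v) and $A\neq\emptyset$ correctly block a clique extension into $C$ in the NG-1 case, and (iv) with $A\neq\emptyset$ forces $\omega\ge|B|+1$ in the NG-2 case), and the bookkeeping $|B|+1\le m\le |B|+|A|$ is right in both cases. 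The trade-off is that the paper's argument is purely structural and shorter, while yours is heavier but makes explicit the quantitative facts ($m=\omega$, $d_m=m-1$, and where the $A$-block sits in the degree sequence) that the paper reuses later in the proof of Theorem~\ref{NG-block-thm}.
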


A graph is a \emph{pseudo-split graph} if it contains neither $C_4$ nor $2K_2$ as an induced subgraph.  This class was first studied by  Bl\'{a}zsik et. al.  in \cite{BlHuPlTu93}  and its connection to NG-graphs was proven in the following theorem in \cite{ChCoTr16}.
%, where the symbol $\dot\cup$ represents a disjoint union.

  \begin{Thm}   The class of pseudo-split graphs equals the following disjoint unions.

\vspace{.2cm}
{\rm (i)} $ \{$NG-graphs$\}  \ \dot\cup  \ \{$balanced split graphs$\}$

{\rm (ii) }$ \{$split graphs$\}  \ \dot\cup  \ \{$NG-3 graphs$\}$
  
  %$\{$pseudo-split graphs$\}  = \{$NG-graphs$\}  \ \dot\cup  \ \{$balanced split graphs$\}$. 
  
%$\{$pseudo-split graphs$\}  = \{$split graphs$\}  \ \dot\cup  \ \{$NG-3 graphs$\}$.
  
  \label{pseudo-split-union}
  \end{Thm}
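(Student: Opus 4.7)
The plan is to first establish (ii) and then deduce (i) from (ii) using Proposition~\ref{unbal-NG-prop}. Disjointness in (ii) is immediate: every NG-3 graph contains an induced $C_5$ (its set $A$), hence fails to be split. Both inclusions into the pseudo-split class are also short: split graphs avoid $2K_2$, $C_4$, and $C_5$ by their forbidden-subgraph characterization, while for an NG-3 graph $G$ a short case analysis on how a hypothetical induced $C_4$ or $2K_2$ meets its ABC partition rules out such a subgraph (the forbidden $A$-$C$ edges, the clique on $B$, the stable set on $C$, and complete $A$-$B$ adjacency each block the specific edges or non-edges that a $C_4$ or $2K_2$ would require).

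The main work of (ii) is the reverse containment: any pseudo-split $G$ that is not split must be NG-3. Since $G$ avoids $C_4$ and $2K_2$ but not $C_5$, it contains an induced $C_5$ on $Q = \{q_1,\dots,q_5\}$ with edges $q_i q_{i+1}$ (indices mod $5$). The central step, which I expect to be the main obstacle, is to show that every $v \notin Q$ satisfies $|N_Q(v)| \in \{0,5\}$. I would rule out each intermediate value by exhibiting a forbidden induced subgraph on four well-chosen vertices: for $|N_Q(v)|=1$ with $N_Q(v)=\{q_1\}$, the set $\{v,q_1,q_3,q_4\}$ induces $2K_2$; for $|N_Q(v)|=2$, non-adjacent neighbors $\{q_1,q_3\}$ yield a $C_4$ on $\{v,q_1,q_2,q_3\}$ and adjacent neighbors yield a $2K_2$; for $|N_Q(v)|=3$ three consecutive neighbors yield a $2K_2$ on $\{v,q_2,q_4,q_5\}$ while the non-consecutive patterns give $C_4$'s; and for $|N_Q(v)|=4$ the three $Q$-neighbors surrounding the missing vertex together with $v$ form a $C_4$. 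This partitions $V(G)\setminus Q$ as $K \sqcup S$ with $K = \{v : N_Q(v) = Q\}$ and $S = \{v : N_Q(v) = \emptyset\}$. The same four-vertex approach then forces $K$ to be a clique (non-adjacent $k_1,k_2 \in K$ combined with non-adjacent $q_1,q_3$ induce a $C_4$) and $S$ to be stable (an $S$-edge together with $q_1 q_2$ induces $2K_2$).

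To conclude that $G$ is NG-3, I would compute $\chi(G) = |K|+3$ (lower bound from $K \cup Q$, since $\chi(C_5) = 3$ and $Q$ requires a color palette disjoint from $K$; upper bound by coloring $K$ with $|K|$ colors, $Q$ with $3$ new colors, and reusing the $Q$-colors on $S$). The degree calculations $\deg(q_i) = |K|+2$, $\deg(k) \ge |K|+4$, and $\deg(s) \le |K|$ then identify the ABC partition as $A = Q$, $B = K$, $C = S$. By Theorem~\ref{NG-charac}, $G$ is NG, and since $G[A] = C_5$, $G$ is NG-3. Finally, (i) follows from (ii) by chaining with Proposition~\ref{unbal-NG-prop}:
\[
\{\text{pseudo-split}\} = \{\text{split}\} \sqcup \{\text{NG-3}\} = \{\text{balanced split}\} \sqcup \{\text{unbalanced split}\} \sqcup \{\text{NG-3}\} = \{\text{balanced split}\} \sqcup \{\text{NG-graphs}\},
\]
with disjointness automatic because NG-1 and NG-2 graphs are unbalanced split graphs (disjoint from balanced split) and NG-3 graphs are not split.
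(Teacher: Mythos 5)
The paper does not prove this theorem at all: it is quoted verbatim from Cheng, Collins, and Trenk \cite{ChCoTr16} (and the underlying structure theorem for pseudo-split graphs goes back to Bl\'azsik et al.\ \cite{BlHuPlTu93}), so there is no in-paper argument to compare against. Your blind proof is a correct, self-contained reconstruction of the standard argument: the decomposition of a non-split pseudo-split graph around an induced $C_5$ into vertices complete to or anticomplete to the $5$-cycle, the forbidden-subgraph case analysis forcing $|N_Q(v)|\in\{0,5\}$, and the chromatic-number and degree computations identifying $(Q,K,S)$ with the $ABC$-partition so that Theorem~\ref{NG-charac} applies. I checked each of the four-vertex witnesses and they work, with one wording slip: in the $|N_Q(v)|=4$ case, the induced $C_4$ is on $v$ together with the \emph{missing} vertex $q_j$ and its two $Q$-neighbours $q_{j-1},q_{j+1}$ (the set $\{v,q_{j-1},q_j,q_{j+1}\}$), not on three neighbours of $v$; three $Q$-neighbours of $v$ plus $v$ cannot induce a $C_4$ since $v$ would have degree $3$ in it. The derivation of (i) from (ii) via Proposition~\ref{unbal-NG-prop} is clean, and the disjointness claims are all justified. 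Two points worth making explicit if this were written out in full: the last step needs the observation that since $Q\subseteq A_G$, $K\subseteq B_G$, $S\subseteq C_G$ and both triples partition $V(G)$, the containments are equalities; and the verification that NG-3 graphs contain no induced $C_4$ or $2K_2$ should note that any such subgraph avoiding $B$ lives in $A\cup C$, where the only edges are those of the $5$-cycle.
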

  
  In section \ref{sec-4}  we characterize  all three types of NG-graphs in terms of the $\alpha(\pi)$ and $\beta(\pi)$ of their degree sequence $\pi$, and do the same for pseudo-split graphs in Section~\ref{sec-6}.

 \section{Majorization in $Dis_k(n)$ and $S$-$Block(n)$}
 %the lattices $Dis(n)$ and $Dis_k(n)$ 
 \label{sec-3}
 
 In this section  we provide background  definitions and results about majorization  and the resulting posets $Dis_k(n)$ and $S$-$Block(n)$. Both of these are related to the lattice of partitions of an integer under majorization, introduced by Brylawski \cite{Br73}  and further developed by Ganter \cite{Ga22}, who provided an algorithm for determining meets and joins, enumerated meet- and join-irreducible elements, and computed a number of lattice statistics. Le and Phan \cite{LePh07} proved that the poset of partitions of an integer into distinct parts is a lattice. In Section~\ref{dis-sec} we primarily focus on  the lattice of partitions of $n$ into distinct parts and in Section~\ref{block-sec} we form blocks containing two of these partitions to construct  the poset $S$-$Block(n)$. 
  
  \subsection{The lattices $Dis(n)$ and $Dis_k(n)$}
  \label{dis-sec}
  
    We first define the well-known relations of  \emph{weak majorization} and \emph{majorization} (also known as \emph{dominance}).
  
   \begin{Def}
 {\rm
 Let $\alpha$ be a     partition of $n_1$ and $\beta$ be    a partition of $n_2$,  where $\alpha = (\alpha_1,\alpha_2, \ldots, \alpha_s)$ and $\beta = (\beta_1,\beta_2, \ldots, \beta_t)$.
  We say \emph{ $\beta$  weakly majorizes $\alpha$}, denoted $\beta \succeq_w \alpha$, if  $n_2 \ge n_1$ and 
 $$\sum_{i=1}^k \beta_i  \ge \sum_{i=1}^k \alpha_i  \ \ \hbox{   for all } k \le \min\{s,t\}.$$
 Furthermore, $\beta $ \emph{majorizes}  $\alpha$, denoted $\beta \succeq \alpha$ if in addition $n_1 = n_2$. 
  If $\beta \succeq \alpha$ and $\beta \neq \alpha$, we write $\beta \succ \alpha$, and similarly for weak majorization.
 }
 \end{Def}
 
 As an example, note that $(7,4,1) \succeq_w (5,3,2,1)$ and  $(7,4,1) \succeq (6,3,2,1)$.  
 
   It is not hard to check that the set of partitions of $n$ into  parts that are positive and distinct  forms a poset under majorization.  We denote this poset by $Dis(n)$ and  denote by 
  $Dis_k(n)$  the subposet of $Dis(n)$ of partitions into \emph{exactly} $k$ parts.   It is well-known that the set of partitions of an integer $n$ form a lattice under the majorization order \cite{Br73} and 
 $Dis(n)$ is shown to be a lattice in \cite{LePh07}.  We  record  the latter below.

  \begin{Thm} 
The poset $Dis(n)$ is a lattice. 
\end{Thm}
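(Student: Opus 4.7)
The plan is to exploit the fact that $L(n)$, the full poset of partitions of $n$ under majorization, is already known to be a lattice (Brylawski \cite{Br73}), and to show that the meet of two distinct-parts partitions computed in $L(n)$ in fact lies in $Dis(n)$. Since $Dis(n)$ is a finite subposet of $L(n)$ containing the top element $(n)$, once pairwise meets are known to exist in $Dis(n)$, joins exist automatically: the set of common upper bounds of any pair is finite, nonempty (it contains $(n)$), and closed under the (ambient) meet, hence has a minimum. So the task reduces to establishing meets in $Dis(n)$.

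First I would recall the description of meets in $L(n)$. Given $\alpha,\beta \in L(n)$ with partial sums $a_k = \sum_{i \le k}\alpha_i$ and $b_k = \sum_{i \le k}\beta_i$, set $c_k = \min(a_k, b_k)$. Then $\mu = \alpha \wedge_L \beta$ is the partition of $n$ whose own partial sums $m_k$ form the largest weakly-concave integer sequence bounded above by $c_k$ with $m_0 = 0$. Any common lower bound $\gamma \in Dis(n)$ of $\alpha,\beta$ also belongs to $L(n)$ and therefore satisfies $\gamma \preceq \mu$. Consequently, once I can show $\mu \in Dis(n)$, this same $\mu$ will serve as the meet of $\alpha$ and $\beta$ in $Dis(n)$.

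The heart of the argument is to show that $\mu$ has strictly decreasing parts. Distinctness of $\alpha$ and $\beta$ translates to strict concavity of their partial-sum sequences: $a_k - a_{k-1} > a_{k+1} - a_k$, and likewise for $b$. I would perform a case analysis based on which of $a_k$ or $b_k$ attains $c_k$ at three consecutive indices $k-1, k, k+1$, and show in each case that strict concavity propagates to $m_k$, so $\mu_k > \mu_{k+1}$. The pure ``$a$-attaining'' and ``$b$-attaining'' cases follow at once from the strict concavity of a single sequence; the mixed cases invoke the defining bounds $c_{k-1} \le a_{k-1}, b_{k-1}$ combined with the distinct-parts strict inequalities for $\alpha$ and $\beta$. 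A backup plan, should the case analysis fail at some configuration, is to show that a local swap of the form $(\ldots, \mu_i, \mu_i, \mu_{i+2}, \ldots) \mapsto (\ldots, \mu_i+1, \mu_i, \mu_{i+2}-1, \ldots)$ produces a strictly larger element of $L(n)$ still bounded by $\alpha$ and $\beta$, contradicting maximality of $\mu$.

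The main obstacle will be the mixed cases where the minimum switches between $a$ and $b$ across consecutive indices --- for instance $c_{k-1} = b_{k-1}$ but $c_k = a_k$. There the strictness of $m_k - m_{k-1} > m_{k+1} - m_k$ does not follow from the strict concavity of a single sequence, but must be teased out of the interplay between $a, b$ and the side conditions $b_{k-1} \le a_{k-1}$ and $a_k \le b_k$. Once this case analysis is complete and $\mu \in Dis(n)$ is established, the meet-semilattice-with-top observation outlined above furnishes joins, and $Dis(n)$ is a lattice.
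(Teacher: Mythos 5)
Your proposal is correct, but it is worth noting that the paper itself offers no proof of this statement: it simply records the result as a theorem of Le and Phan \cite{LePh07}, so any self-contained argument is ``different from the paper's.'' Your route --- realize $Dis(n)$ as a meet-subsemilattice of Brylawski's dominance lattice $L(n)$ and then get joins for free from finiteness plus the top element $(n)$ --- is sound, and the reduction of joins to meets via the set of common upper bounds is handled correctly (note that the join in $Dis(n)$ will in general differ from the join in $L(n)$, which your argument correctly never needs). Two remarks on the technical core. First, the meet in $L(n)$ is simpler than your description suggests: the pointwise minimum $c_k=\min(a_k,b_k)$ of two concave partial-sum sequences is already concave, so no ``largest weakly-concave sequence below $c$'' envelope is needed; $c$ itself is the partial-sum sequence of $\alpha\wedge_L\beta$. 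Second, the mixed cases you flag as the main obstacle in fact dissolve: suppose $\mu_{k+1}>0$, i.e.\ $c_{k+1}>c_k$, and let $x\in\{a,b\}$ attain the minimum at $k$, so $c_k=x_k$. Then $x_{k+1}\ge c_{k+1}>c_k=x_k$, so the corresponding partition has a positive $(k{+}1)$st part and is therefore strictly concave at $k$, whence $2c_k=2x_k>x_{k-1}+x_{k+1}\ge c_{k-1}+c_{k+1}$. This single computation covers every configuration (including the boundary where one partition has already ended, in which case $\mu_{k+1}=0$ and there is nothing to prove), so $\mu$ has distinct parts and your argument closes without any delicate interplay between $a$ and $b$.
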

 
In Proposition~\ref{dis-k-n-lattice-prop} we show  the poset $Dis_k(n)$ is also a lattice.  Our proof is a consequence of 
  identifying  minimum and maximum elements $\tau_k(n)$ and $\tau'_k(n)$  of $Dis_k(n)$.  The partitions $\tau_k(n)$ and $\tau'_k(n)$ will also play a fundamental role in later sections.

 Figure~\ref{fig-dis-10-12}   shows $Dis(10)$, $Dis(12)$, and a sublattice of $Dis(14)$. In Section~\ref{block-sec} we will use pairs of elements of $Dis(n)$ to form  a new poset, $S$-$Block(n)$.  The poset $S$-$Block(10)$ is shown in Figure~\ref{fig-s-10}.
 Observe that lower elements of $Dis(n)$ have longer lengths, as we make precise in the next lemma.\\

\small

\begin{figure}[h]
\begin{center}
\begin{tikzpicture}[scale=.65]
\draw[thick] (1,0)--(1,3);
\draw[thick] (1,5)--(1,8);
\draw[thick] (1,3)--(2,4)--(1,5)--(0,4)--(1,3);
\filldraw[blue]

(1,0) circle [radius=3pt]
(1,1) circle [radius=3pt]
(1,2) circle [radius=3pt]
(1,3) circle [radius=3pt]
(0,4) circle [radius=3pt]
(2,4) circle [radius=3pt]
(1,5) circle [radius=3pt]
(1,6) circle [radius=3pt]
(1,7) circle [radius=3pt]
(1,8) circle [radius=3pt];

\node at (2,0) {(4,3,2,1)};
\node at (1.8,.97) {(5,3,2)};
\node at (1.8,1.97) {(5,4,1)};
\node at (1.8,2.95) {(6,3,1)};
\node at (1.7,5.1) {(7,3)};
\node at (1.7,6) {(8,2)};
\node at (1.7,7) {(9,1)};
\node at (1.6,8) {(10)};
\node at (2.6,4) {(6,4)};
\node at (-.8,4) {(7,2,1)};

\node at (1,-1) {(a)};

\draw[thick] (6,0)--(5,1)--(6,2)--(5,3)--(6,4)--(5,4.8)--(5,6)--(6,7)--(6,10);
\draw[thick] (6,0)--(7,1)--(6,2)--(7,3)--(6,4)--(7,5)--(7,6.2)--(6,7);
\draw[thick] (5,4.8)--(7,6.2);

\filldraw[blue]

(6,0) circle [radius=3pt]
(5,1) circle [radius=3pt]
(7,1) circle [radius=3pt]
(6,2) circle [radius=3pt]
(5,3) circle [radius=3pt]
(7,3) circle [radius=3pt]
(6,4) circle [radius=3pt]
(5,4.8) circle [radius=3pt]
(7,5) circle [radius=3pt]
(5,6) circle [radius=3pt]
(7,6.2) circle [radius=3pt]
(6,7) circle [radius=3pt]
(6,8) circle [radius=3pt]
(6,9) circle [radius=3pt]
(6,10) circle [radius=3pt];

\node at (7.1,0) {(5,4,2,1)};
\node at (7.8,1) {(5,4,3)};
\node at (6.9,2) {(6,4,2)};
\node at (7.8,3) {(6,5,1)};
\node at (6.9,4) {(7,4,1)};
\node at (4,1) {(6,3,2,1)};
\node at (4.2,3) {(7,3,2)};
\node at (4.2,4.8) {(8,3,1)};
\node at (4.2,6) {(9,2,1)};
\node at (7.6,5.1) {(7,5)};
\node at (7.6,6.2) {(8,4)};
\node at (6.7,7.1) {(9,3)};
\node at (6.8,8) {(10,2)};
\node at (6.8,9) {(11,1)};
\node at (6.6,10) {(12)};

\node at (6,-1) {(b)};

\filldraw[blue]

(11,2.5) circle [radius=3pt]
(10.3,4) circle [radius=3pt]
(11.8,3.5) circle [radius=3pt]
(11.8,4.5) circle [radius=3pt]
(11,5.5) circle [radius=3pt];

\node at (12.1,2.4) {(7,4,2,1)};
\node at (12.7,3.5) {(7,4,3)};
\node at (12.7,4.5) {(7,5,2)};
\node at (9.2,4) {(8,3,2,1)};
\node at (11.9,5.6) {(8,4,2)};
\node at (11,-1) {(c)};

\draw[thick] (11,2.5)--(10.3,4)--(11,5.5)--(11.8,4.5)--(11.8,3.5)--(11,2.5);

\end{tikzpicture}

\caption{The Hasse Diagrams for  (a)  $Dis(10)$, (b)  $Dis(12)$, and  (c) a portion of $Dis(14)$.   }

%\caption{The posets (a)  $Dis(10)$, (b)  $Dis(12)$ in (b), which has no left-right symmetry. In (c) we show a non-ranked sublattice of $Dis(14)$. Each edge in the Hasse diagram is a covering edge in $Dis(14)$.}
\label{fig-dis-10-12}
\end{center}
\end{figure}
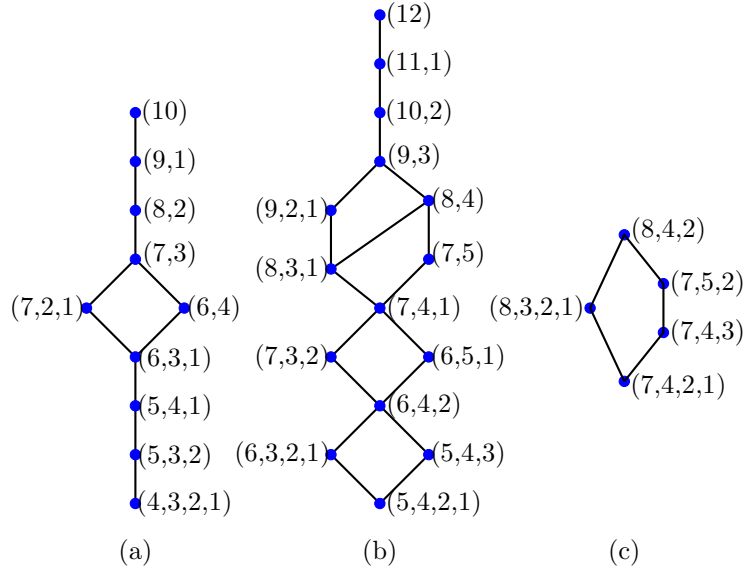

\normalsize

\begin{lemma}
If $\gamma_1, \gamma_2 \in Dis(n)$ and $\gamma_1 \succeq \gamma_2$ then len$(\gamma_1) \le $ len$(\gamma_2)$.
 \label{length-lemma}
\end{lemma}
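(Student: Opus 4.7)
The plan is to argue by contradiction: assume $s := {\rm len}(\gamma_1) > {\rm len}(\gamma_2) =: t$ and derive a contradiction from the fact that both partitions sum to $n$ with strictly positive parts. Write $\gamma_1 = (a_1, a_2, \ldots, a_s)$ and $\gamma_2 = (b_1, b_2, \ldots, b_t)$, where each $a_i, b_j \ge 1$ (since we are in $Dis(n)$, parts are positive and distinct).

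First, I would apply the majorization hypothesis at $k = t = \min\{s,t\}$, which yields
\[
\sum_{i=1}^{t} a_i \;\ge\; \sum_{i=1}^{t} b_i \;=\; n.
\]
Next, since $\gamma_1$ is itself a partition of $n$, we also have $\sum_{i=1}^{s} a_i = n$. Subtracting, the tail satisfies
\[
\sum_{i=t+1}^{s} a_i \;=\; n - \sum_{i=1}^{t} a_i \;\le\; 0.
\]

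Finally, I would observe the contradiction: the tail $\sum_{i=t+1}^{s} a_i$ is a sum of $s - t \ge 1$ strictly positive integers, so it is at least $s - t \ge 1 > 0$. This contradicts the preceding inequality, so our assumption $s > t$ is untenable, and ${\rm len}(\gamma_1) \le {\rm len}(\gamma_2)$. There is no real obstacle here; the only subtle point is that the majorization comparison is restricted to $k \le \min\{s,t\}$, so one must take $k = t$ rather than $k = s$, and then exploit the total-sum equality together with positivity of the remaining parts of $\gamma_1$.
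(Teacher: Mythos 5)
Your proof is correct and follows essentially the same route as the paper's: assume the length of $\gamma_1$ exceeds that of $\gamma_2$, apply the majorization inequality at $k=\min\{s,t\}$, and derive a contradiction from the positivity of the leftover parts of $\gamma_1$. The only difference is cosmetic — the paper phrases the contradiction as $\sum_{i=1}^{t} a_i < n$ violating majorization, while you phrase it as the tail being simultaneously nonpositive and positive.
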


\begin{proof}
Let $\gamma_1 = (b_1,b_2, \ldots, b_r)$ for integers  $b_1 > b_2 > \cdots > b_r > 0$, and let 
$\gamma_2 = (c_1,c_2, \ldots, c_s)$ for integers  $c_1 > c_2 > \cdots > c_s > 0$.  We wish to show $r \le s$.  For a contradiction, assume $r > s$.  Then $\sum_{i = s+1}^r b_i > 0$ so
 $$\sum_{i = 1}^s c_i  = n = \sum_{i = 1}^r b_i  > \sum_{i = 1}^s b_i $$
 contradicting the hypothesis that $\gamma_1 \succeq \gamma_2$ when we sum $s$ terms.
\end{proof}

 % \subsection {The posets $Dis(n)$ and $Dis_k(n)$}
  We next show there exist maximum and minimum elements of $Dis_k(n)$ when $Dis_k(n) \neq \emptyset$.
 If $\alpha = (a_1, a_2, \ldots, a_k) \in Dis_k(n)$, then $a_k \ge 1, a_{k-1} \ge 2, \ldots, a_2 \ge k-1$, and  $a_1 \ge k$; thus if $Dis_k(n)$ is not empty, we have $n \ge 1 + 2 + \cdots + k = \binom{k+1}{2}$.  We will see that in this case,  $Dis_k(n)$ has a maximum element $\tau'_k(n) $, given in Definition~\ref{tau-max-def}, in which the first part is as large as possible.  Likewise, $Dis_k(n)$ has a minimum element $\tau_k(n) $, defined in Definition~\ref{tau-min-def}, in which the parts are as equal in size as possible.  For $n=10$ and $k=3$, we have $\tau'_3(10) = (7,2,1)$ and $\tau_3(10) = (5,3,2)$.

      \begin{Def} {\rm For $n \ge \binom{k+1}{2}$, let  $\tau'_k(n) = (n-\binom{k}{2}, k-1, k-2, \ldots, 3,2,1)$.  }

    \label{tau-max-def}
  \end{Def}
  
     \begin{lemma}
  If $n \ge \binom{k+1}{2}$ then   $\tknpr $ is the  maximum element of $Dis_k(n)$.

    \label{tau-pr-max-lem}
  \end{lemma}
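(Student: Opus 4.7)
The plan is to establish the lemma in two parts: first verify that $\tknpr$ actually lies in $Dis_k(n)$, and then show it dominates every other element by a partial-sum argument driven by the "tail" of a partition with distinct parts.

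For membership, I would compute the sum of $\tknpr = (n-\binom{k}{2}, k-1, k-2, \ldots, 2, 1)$, getting $n$ immediately, and then verify that the parts are strictly decreasing and positive. The only nontrivial inequality is $n - \binom{k}{2} > k-1$, and this is exactly the hypothesis $n \ge \binom{k+1}{2}$ rewritten (using $\binom{k+1}{2} = \binom{k}{2} + k$). So $\tknpr \in Dis_k(n)$.

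For the maximality, let $\alpha = (a_1, \ldots, a_k) \in Dis_k(n)$ be arbitrary. The key observation is a tail lower bound: since $a_k > a_{k-1} > \cdots > a_{j+1}$ are distinct positive integers, we have $a_k \ge 1$, $a_{k-1} \ge 2$, and more generally $a_{j+i} \ge k - j - i + 1$, so
\[
\sum_{i=j+1}^{k} a_i \ge 1 + 2 + \cdots + (k-j) = \binom{k-j+1}{2}.
\]
Equivalently, $\sum_{i=1}^{j} a_i \le n - \binom{k-j+1}{2}$. A short direct calculation of the partial sums of $\tknpr$ (the first entry contributes $n - \binom{k}{2}$, and the next $j-1$ entries $k-1, k-2, \ldots, k-j+1$ contribute $\binom{k}{2} - \binom{k-j+1}{2}$) gives exactly $\sum_{i=1}^{j} \tknpr_i = n - \binom{k-j+1}{2}$ for every $j \in \{1,\ldots,k\}$. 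Combining these yields $\sum_{i=1}^{j} \tknpr_i \ge \sum_{i=1}^{j} a_i$ for all $j$, which is the defining condition for $\tknpr \succeq \alpha$. Hence $\tknpr$ is the maximum element.

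There is no serious obstacle here; the proof is essentially bookkeeping once one sees that $\tknpr$ achieves equality in the tail bound used to bound every other element from above. The only mild care needed is to confirm that the lemma's hypothesis $n \ge \binom{k+1}{2}$ is precisely what keeps the first part of $\tknpr$ strictly larger than its second part, ensuring the parts remain distinct.
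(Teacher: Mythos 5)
Your proof is correct and follows essentially the same route as the paper: verify membership using $n \ge \binom{k+1}{2}$, then bound each partial sum of $\alpha$ by bounding the complementary tail from below via distinctness, noting that $\tknpr$ achieves equality in that tail bound. The only cosmetic issue is that your chain ``$a_k > a_{k-1} > \cdots > a_{j+1}$'' is written in the wrong direction (it should read $a_{j+1} > \cdots > a_k$), but the bounds you derive from it are the right ones.
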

  
  \begin{proof}
 Let $\tau'_k(n) = (t'_1, t'_2, \ldots, t'_k)$ %as in Definition~\ref{tau-max-def}.  
      and note that $\tau'_k(n) \in Dis_k(n)$, since $t_1'=n-\binom{k}{2}\geq \binom{k+1}{2} -\binom{k}{2}=k$. 
     %$n \ge \binom{k+1}{2} = 1 + 2 +  \cdots + k = (t'_2 + t'_3 + \cdots t'_k) + k$, so $t'_1 =  n - (t'_2 + t'_3 + \cdots + t'_{k})\ge k > t'_2 > t'_3 \cdots > t'_k$.  
  We let $\alpha = (a_1, a_2, a_3, \ldots, a_k) \in Dis_{k}(n)$ and show  $\tknpr \succeq \alpha$.     Since the parts of $\alpha$ are decreasing in size, we have $a_k\ge 1=t'_k, \ a_{k-1} \ge 2=t'_{k-1},  \  \ldots, \  a_{2} \ge k-1=t'_2$, and $a_1 = n - (a_2 + a_3 + \cdots + a_k)$. %\leq n-((k-1)+(k-2)+\cdots+2+1)=n-\binom{k}{2}$.  
Then for all $r$ with $1 \le r \le k$, 
 \[\sum_{i=1}^r a_i = \sum_{i=1}^k a_i -\sum_{i=r+1}^k a_i  =n -\sum_{i=r+1}^k a_i  \   \leq  \ 
 \sum_{i=1}^k t'_i -\sum_{i=r+1}^k t'_i=\sum_{i=1}^r t'_i.\] 
 So $\alpha\preceq \tau'_k(n) .$   \end{proof}

 In the next lemma we prove that the elements $\tau'_k(n) $ form a chain in $Dis(n)$.
 
 \begin{lemma}
 Suppose $n$, $j$  and $k$ are positive integers with $n \ge \binom{k+1}{2}$. 
 If $k > j$ then $\tau'_k(n) \preceq \tau'_j(n)$.
 \label{tau-pr-jk-lem}
 \end{lemma}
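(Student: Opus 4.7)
The plan is to verify the majorization $\tau'_k(n) \preceq \tau'_j(n)$ directly by computing partial sums in closed form. Write $\tau'_k(n) = (t'_1, t'_2, \ldots, t'_k)$ and $\tau'_j(n) = (s'_1, s'_2, \ldots, s'_j)$. By Definition~\ref{tau-max-def}, $\tau'_k(n)$ has length $k$ and $\tau'_j(n)$ has length $j$, so with $k > j$ the majorization inequality need only be checked for $r = 1, 2, \ldots, j$. Moreover, since $n \ge \binom{k+1}{2} > \binom{j+1}{2}$, both partitions are well-defined elements of $Dis(n)$ and each sums to $n$, so it suffices to verify the partial-sum inequalities for $r \le j$.

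The main step is a telescoping identity: for $1 \le r \le j$,
$$\sum_{i=1}^{r} t'_i \;=\; \Bigl(n - \binom{k}{2}\Bigr) + \sum_{i=2}^{r}(k - i + 1) \;=\; n - \binom{k-r+1}{2},$$
where the last equality uses $\sum_{i=2}^{r}(k-i+1) = \binom{k}{2} - \binom{k-r+1}{2}$. The identical argument gives $\sum_{i=1}^{r} s'_i = n - \binom{j-r+1}{2}$. Since $k > j \ge r$, we have $k-r+1 > j-r+1 \ge 1$, hence $\binom{k-r+1}{2} \ge \binom{j-r+1}{2}$, and therefore
$$\sum_{i=1}^{r} t'_i \;\le\; \sum_{i=1}^{r} s'_i.$$
Together with the fact that $\tau'_k(n)$ and $\tau'_j(n)$ both partition $n$, this establishes $\tau'_k(n) \preceq \tau'_j(n)$.

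I do not expect any serious obstacle: the proof is essentially a short arithmetic exercise. The only minor bookkeeping is noting that $j - r + 1 \ge 1$ throughout so that the binomial coefficients are meaningful (with $\binom{1}{2} = 0$ when $r = j$, which is harmless). An alternative approach would be induction on $k - j$, reducing to adjacent pairs where one observes that $\tau'_{k-1}(n)$ is obtained from $\tau'_k(n)$ by merging the second part (of size $k-1$) into the first part, which is a prototypical upward move in the majorization order; but this reduces to essentially the same computation.
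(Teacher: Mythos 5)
Your proof is correct and takes essentially the same route as the paper: both are direct verifications of the partial-sum inequalities from the explicit form of $\tau'_k(n)$, with the paper phrasing it structurally (the last $j-1$ parts agree and the first part of $\tau'_j(n)$ is the merge of the first $k-j+1$ parts of $\tau'_k(n)$ --- exactly the observation you mention as your alternative) while you compute the closed form $\sum_{i=1}^{r} t'_i = n - \binom{k-r+1}{2}$ and compare. Your identity and the resulting comparison are both correct, so no issues.
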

  
  \begin{proof}
  For $k > j$, the last $j-1$ parts of $\tau'_k(n)$ and $\tau'_j(n)$ are equal, and the sum of the first $k-j+1$ parts of $\tau'_k(n)$ equals the first part of $\tau'_j(n)$. Thus $\tau'_j(n) \succeq \tau'_k(n)$.    
  \end{proof}
   
  We are now ready to define $\tau_k(n)$ and prove it is the  minimum element in  $Dis_k(n)$.    Intuitively, we can think of constructing $\tkn$ by first assigning 1 to the smallest part, 2 to the next smallest part, etc. and then dividing the remaining $n - \binom{k+1}{2}$ as evenly as possible among the $k$ parts, starting at the largest parts.
  
  \begin{Def}
  {\rm
  Suppose  $n$ and $k$ are positive integers with $n \ge \binom{k+1}{2}$.    Let $n_k = n- \binom{k+1}{2} $ and write $n_k = qk+r$ where $q$ and $r$ are non-negative integers and $0 \le r < k$.   Let $\hat{\tau}_k(n_k) = (\hat{s}_1,\hat{s}_2, \ldots, \hat{s}_k)$ where $\hat{s}_i =   q+1$  for $ 1 \le i  \le r$, and  $\hat{s}_i = q$ for $r < i \le k$.   Finally, let $\tkn = (\hat{s}_1,\hat{s}_2, \ldots, \hat{s}_k)+ (k, k-1, k-2, \ldots, 1)$.%where $t_i = \hat{s}_i + (k+1-i)$.
  
  }
    \label{tau-min-def}
  \end{Def}

 We give an example of the construction for $n = 20$ and $k = 3$.  In this case, $n_3 = 20 - \binom{4}{2} = 14$, we write $14 = 4 \cdot 3 + 2$ and get  $\hat{\tau}_3(14) = (5,5,4)$.  Thus $\tau_3(20) = (5,5,4) + (3,2,1) = (8,7,5)$.  Alternatively, $\tkn$ is the unique  element $(t_1, t_2, t_3, \ldots, t_k)$ of $Dis_k(n)$ for which $t_1 - t_k \le k$. It is not hard to show the following results about $\hat{\tau}_k(n_k)$, which we record in a remark.
 
\begin{remark}  
The unique minimum partition of $n_k$ into at most $k$ parts under majorization is $\hat{\tau}_k(n_k)$, and $\hat{\tau}_k(n_k)\preceq \hat{\tau}_{k-1}(n_k)$. 
\label{tau-rem}

\end{remark}
    
  \begin{lemma}
  If  $n \ge \binom{k+1}{2}$ then  $\tau_k(n) $ is the unique minimum element of $Dis_k(n)$.

  \label{tau-min-lem}
    
  \end{lemma}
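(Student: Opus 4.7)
The plan is to reduce the claim about $Dis_k(n)$ to Remark~\ref{tau-rem} by a simple ``subtract the staircase'' bijection. First, I would verify that $\tau_k(n) \in Dis_k(n)$: by construction it has $k$ parts; since $\hat{s}_1 \ge \hat{s}_2 \ge \cdots \ge \hat{s}_k$ and $(k, k-1, \ldots, 1)$ is strictly decreasing, the sum $\tau_k(n)$ has strictly decreasing positive integer parts; and the parts sum to $n_k + \binom{k+1}{2} = n$.

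Next, for any $\alpha = (a_1, \ldots, a_k) \in Dis_k(n)$, define the ``deflated'' sequence $\hat{\alpha} = (a_1 - k, a_2 - (k-1), \ldots, a_k - 1)$. Because $a_i > a_{i+1}$ forces $a_i - (k+1-i) \ge a_{i+1} - (k-i)$, and because $a_k \ge 1$, $\hat{\alpha}$ is a non-increasing sequence of non-negative integers, i.e., a partition of $n - \binom{k+1}{2} = n_k$ into at most $k$ parts. Conversely, adding $(k, k-1, \ldots, 1)$ to any such partition produces an element of $Dis_k(n)$, so the map $\alpha \mapsto \hat{\alpha}$ is a bijection between $Dis_k(n)$ and the set of partitions of $n_k$ into at most $k$ parts. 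In particular $\hat{\tau}_k(n_k)$ corresponds to $\tau_k(n)$ by Definition~\ref{tau-min-def}.

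The key observation is that this bijection preserves majorization: for each $r$ with $1 \le r \le k$,
\[
\sum_{i=1}^r a_i - \sum_{i=1}^r t_i = \sum_{i=1}^r \hat{\alpha}_i - \sum_{i=1}^r \hat{s}_i,
\]
since the staircase contribution $\sum_{i=1}^r (k+1-i)$ cancels. Therefore $\alpha \succeq \tau_k(n)$ in $Dis_k(n)$ if and only if $\hat{\alpha} \succeq \hat{\tau}_k(n_k)$ as partitions of $n_k$ into at most $k$ parts.

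The conclusion is then immediate from Remark~\ref{tau-rem}: $\hat{\tau}_k(n_k)$ is the unique minimum of partitions of $n_k$ into at most $k$ parts under majorization, so $\tau_k(n)$ is the unique minimum of $Dis_k(n)$. I do not anticipate a real obstacle; the only subtlety is being careful that $\hat{\alpha}$ is allowed to have trailing zeros (so it lives in the ``at most $k$ parts'' poset, not $Dis_k(n_k)$), which is exactly why the auxiliary Remark~\ref{tau-rem} is phrased that way.
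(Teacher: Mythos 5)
Your proposal is correct and follows essentially the same route as the paper: both subtract the staircase $(k,k-1,\ldots,1)$ to pass to a partition of $n_k$ into at most $k$ parts, invoke Remark~\ref{tau-rem} for the minimality of $\hat{\tau}_k(n_k)$, and transfer the conclusion back. You are somewhat more explicit than the paper about why the staircase shift preserves majorization (the partial-sum cancellation), but this is just filling in a step the paper leaves implicit.
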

  
\begin{proof}
We let  $\alpha \in Dis_k(n)$  and show $\tau_k(n) \preceq \alpha$.  %, where 
 %$\tau_k(n) $
 %= (t_1,t_2, \ldots, t_k)$  
 %is defined in Definition~\ref{tau-min-def}. 
  Write $\alpha = (a_1, a_2, a_3, \ldots, a_k) $, and let 
 $a'_1 = a_1-k$,  $a'_2 = a_2-(k-1)$, \ldots, $a'_k = a_k-1$.  By construction, $a'_1 \ge a'_2 \ge \cdots \ge a'_k\ge 0$ and the $a_i'$  sum to $n_k=n-\binom{k+1}{2}$.  
The non-zero $a'_i$ form a partition $\hat{\alpha}$ of $n_k$ into at most $k$ parts. Since $\hat{\tau}_k(n_k)$, defined in Definition~\ref{tau-min-def},  is the unique minimum partition of $n_k$ into at most $k$ parts, then  $\hat{\tau}_k(n_k) \preceq  \hat{\alpha}$.
  It follows that $\tau_k(n) \preceq \alpha$, and $\tau_k(n)$ is the unique minimum element of $ Dis_k(n)$. 
\end{proof}
  
  The next lemma is analogous to Lemma~\ref{tau-pr-jk-lem} and  proves that  the elements $\tau'_k(n) $  also form a chain in $Dis(n)$.

 \begin{lemma}
 Suppose $n$, $j$, and $k$ are positive integers with $n \ge \binom{k+1}{2}$.  If $k > j$,
  then $\tau_k(n) \preceq \tau_j(n)$.
 \label{tau-jk-lem}
 \end{lemma}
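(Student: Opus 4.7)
\smallskip
\noindent \textbf{Proof plan.} The plan is to reduce to the single-step case $k = j+1$ by transitivity of $\preceq$; chaining the inequalities $\tau_k(n) \preceq \tau_{k-1}(n) \preceq \cdots \preceq \tau_j(n)$ then yields the full statement. For this single step, I appeal to Lemma~\ref{tau-min-lem}: since $\tau_k(n)$ is the unique minimum of $Dis_k(n)$, it suffices to exhibit any $\alpha \in Dis_k(n)$ satisfying $\alpha \preceq \tau_{k-1}(n)$, because then $\tau_k(n) \preceq \alpha \preceq \tau_{k-1}(n)$.

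The construction of $\alpha$ is a single ``splitting move'' on $\tau_{k-1}(n) = (s_1, s_2, \ldots, s_{k-1})$: subtract $1$ from some part $s_p$ and append a new smallest part equal to $1$, producing $\alpha = (s_1, \ldots, s_p - 1, \ldots, s_{k-1}, 1)$. This $\alpha$ sums to $n$, and each partial sum of $\alpha$ is at most the corresponding partial sum of $\tau_{k-1}(n)$ (one box has been pushed strictly rightward in the Young diagram), so $\alpha \preceq \tau_{k-1}(n)$ once we know $\alpha \in Dis_k(n)$. The delicate point is choosing $p$ so that $\alpha$ has $k$ distinct positive parts.

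I select $p$ at a position where the gap $s_p - s_{p+1}$ is at least $2$ (with the convention $s_k := 0$). Writing $\tau_{k-1}(n) = \hat{\tau}_{k-1}(n_{k-1}) + (k-1, k-2, \ldots, 1)$ as in Definition~\ref{tau-min-def}, with $n_{k-1} = q_{k-1}(k-1) + r_{k-1}$ and parts $\hat{s}_i$ of $\hat\tau_{k-1}(n_{k-1})$: if $r_{k-1} \ge 1$, the transition $\hat{s}_{r_{k-1}} = q_{k-1}+1 \to \hat{s}_{r_{k-1}+1} = q_{k-1}$ forces $s_{r_{k-1}} - s_{r_{k-1}+1} = 2$, so $p = r_{k-1}$ works. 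If instead $r_{k-1} = 0$, all interior gaps equal $1$, but the hypothesis $n \ge \binom{k+1}{2}$ gives $n_{k-1} \ge k$, hence $q_{k-1} \ge 2$ and $s_{k-1} = q_{k-1}+1 \ge 3$; then $p = k-1$ works because $s_{k-1} - 1 \ge 2$ is distinct both from the appended $1$ and from $s_{k-2} > s_{k-1}$.

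The main obstacle is this distinctness verification in each subcase, together with confirming $s_{k-1} \ge 2$ from $n \ge \binom{k+1}{2}$. Once these short checks are in hand, the bound $\alpha \preceq \tau_{k-1}(n)$ follows immediately from comparing partial sums, and Lemma~\ref{tau-min-lem} closes the argument.
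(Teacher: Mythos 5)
Your proof is correct, but it takes a different route from the paper's. Both arguments reduce to the single step $\tau_k(n)\preceq\tau_{k-1}(n)$ and finish by transitivity, but the paper proceeds algebraically: it rewrites $\tau_{k-1}(n)=\hat{\tau}_{k-1}(n_{k-1})+(k-1,\ldots,1)$, shifts the staircase to get $\hat{\tau}_{k-1}(n_k+1)+(k,\ldots,2)$, and then applies Remark~\ref{tau-rem} (minimality and monotonicity of $\hat{\tau}$) twice to telescope down to $\hat{\tau}_k(n_k)+(k,\ldots,2,1)=\tau_k(n)$. You instead exhibit an explicit witness: a single box-splitting move on $\tau_{k-1}(n)$ producing some $\alpha\in Dis_k(n)$ with $\alpha\preceq\tau_{k-1}(n)$, after which Lemma~\ref{tau-min-lem} gives $\tau_k(n)\preceq\alpha$ for free. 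Your case analysis checks out: when $r_{k-1}\ge 1$ the gap at position $r_{k-1}$ is exactly $2$ (and $q_{k-1}\ge 1$ guarantees the untouched last part exceeds the appended $1$), and when $r_{k-1}=0$ the bound $n_{k-1}\ge k$ forces $q_{k-1}\ge 2$, so $s_{k-1}-1\ge 2$ stays distinct from the new part. One small caveat: your blanket criterion ``choose $p$ where the gap is at least $2$ (with $s_k:=0$)'' would not by itself suffice when $p=k-1$, since there the decremented part must avoid the appended $1$ rather than $0$; your explicit verification of $s_{k-1}\ge 3$ in that subcase is what actually carries the argument. The trade-off: the paper's computation is shorter and leans on the machinery of $\hat{\tau}$ already set up in Remark~\ref{tau-rem}, while your argument is more self-contained and combinatorially transparent, at the cost of a two-case distinctness check.
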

 
 \begin{proof} 
 We will show that $\tau_k(n) \preceq \tau_{k-1}(n)$; the result in the lemma then follows by transitivity.  First note that $n = n_{k} + \binom{k+1}{2}$ and $n = n_{k-1} + \binom{k}{2}$, so $n_{k-1} = n_k + k$.  Then using Definition~\ref{tau-min-def} and Remark~\ref{tau-rem} we have,
 \begin{eqnarray*}
  \tau_{k-1}(n)  &=& \hat{\tau}_{k-1}(n_{k-1}) + (k-1, k-2, \ldots ,2, 1)\\
  & =&  \hat{\tau}_{k-1}(n_{k-1} - (k-1)) + (k, k-1, \ldots, 3,2) \\
  &=&
\hat{\tau}_{k-1}(n_{k} + 1) + (k, k-1, \ldots, 3, 2)  \\
  &\succeq & \hat{\tau}_{k}(n_{k}+ 1) + (k, k-1, \ldots, 3,2,0) \\
  &\succeq & \hat{\tau}_{k}(n_{k}) + (k, k-1, \ldots,3, 2, 1) =  \tau_{k}(n).
 \end{eqnarray*}
 \end{proof}

 The next result specifies the lengths  of the meet ($\wedge$) and join ($\vee$) of two elements in  the lattice $Dis(n)$.

  \begin{Prop}
  If     $ \gamma_1 \in Dis_k(n)$ and $ \gamma_2 \in Dis_j(n)$ then len$(\gjg) = \min \{ j,k\}$ and len$(\gmg) = \max \{ j,k\}$. 
Moreover, if $ \gamma_1, \gamma_2 \in Dis_k(n)$ then len$(\gjg) =  $ len($\gmg) = k$.
  \label{mj-length-prop}
  \end{Prop}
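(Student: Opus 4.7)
The plan is to bracket the lengths of $\gmg$ and $\gjg$ from both sides using Lemma~\ref{length-lemma} (which says that majorization between elements of $Dis(n)$ reverses length) together with the chain properties of the extremal elements $\tau_k(n)$ and $\tau'_k(n)$ established in Lemmas~\ref{tau-pr-jk-lem} and \ref{tau-jk-lem}. By symmetry, I would assume throughout that $k \geq j$, so that the claim becomes $\text{len}(\gmg) = k$ and $\text{len}(\gjg) = j$.

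For the meet, one direction is immediate: $\gmg \preceq \gamma_1$ and $\gmg \preceq \gamma_2$, so Lemma~\ref{length-lemma} gives $\text{len}(\gmg) \geq k$ and $\text{len}(\gmg) \geq j$, hence $\text{len}(\gmg) \geq k = \max\{j,k\}$. For the other direction, I would produce a common lower bound of $\gamma_1$ and $\gamma_2$ having length exactly $k$. The natural choice is $\tau_k(n)$: it sits below $\gamma_1$ because it is the minimum element of $Dis_k(n)$ (Lemma~\ref{tau-min-lem}), and it sits below $\gamma_2$ because Lemma~\ref{tau-jk-lem} (with $k \geq j$) gives $\tau_k(n) \preceq \tau_j(n)$, while $\tau_j(n) \preceq \gamma_2$ by Lemma~\ref{tau-min-lem} applied to $Dis_j(n)$. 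Hence $\tau_k(n) \preceq \gmg$, and another application of Lemma~\ref{length-lemma} yields $\text{len}(\gmg) \leq \text{len}(\tau_k(n)) = k$, completing the equality.

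The argument for the join is dual. Since $\gjg \succeq \gamma_1, \gamma_2$, Lemma~\ref{length-lemma} gives $\text{len}(\gjg) \leq \min\{j,k\} = j$. For the reverse inequality, I would use $\tau'_j(n)$ as a common upper bound: by Lemma~\ref{tau-pr-max-lem} it dominates $\gamma_2$, and by Lemma~\ref{tau-pr-jk-lem} we have $\tau'_k(n) \preceq \tau'_j(n)$ while $\gamma_1 \preceq \tau'_k(n)$ by Lemma~\ref{tau-pr-max-lem}, so $\gamma_1 \preceq \tau'_j(n)$ as well. Thus $\gjg \preceq \tau'_j(n)$, and Lemma~\ref{length-lemma} gives $\text{len}(\gjg) \geq \text{len}(\tau'_j(n)) = j$.

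The "moreover" clause is then just the case $j = k$ of the first statement, where $\min\{j,k\} = \max\{j,k\} = k$. There is no real obstacle; the only thing to be careful about is invoking the correct chain inequality ($\tau_k(n) \preceq \tau_j(n)$ versus $\tau'_k(n) \preceq \tau'_j(n)$) for the meet and the join respectively, and remembering that Lemma~\ref{length-lemma} \emph{reverses} the order between majorization and length.
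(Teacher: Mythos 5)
Your proposal is correct and follows essentially the same route as the paper: the paper bounds ${\rm len}(\gjg)$ above by ${\rm len}(\gamma_2)$ via Lemma~\ref{length-lemma} and below by exhibiting $\tau'_j(n)$ as a common upper bound via Lemmas~\ref{tau-pr-max-lem} and \ref{tau-pr-jk-lem}, declaring the meet case analogous, which is exactly the dual argument you spell out with $\tau_k(n)$ and Lemmas~\ref{tau-min-lem} and \ref{tau-jk-lem}. No gaps.
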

  
  \begin{proof}
    The last sentence follows immediately, so we prove the first sentence of the proposition.
  Suppose  $\gamma_1 \in Dis_k(n)$ and $ \gamma_2 \in Dis_j(n)$ where $j \le k$, and let $\tkn, \tjn, $ $\tknpr, \tjnpr$ be  the partitions in $Dis(n)$ defined in Definitions~\ref{tau-max-def} and \ref{tau-min-def}.    
  
  We know $(\gjg )\succeq \gamma_2$, so len$(\gjg) \le $ len$(\gamma_2) = j$ by Lemma~\ref{length-lemma}.
  By Lemma~\ref{tau-pr-jk-lem} and Lemma~\ref{tau-pr-max-lem}, $\tjnpr \succeq \tknpr \succeq \gamma_1$, and $\tjnpr \succeq \gamma_2$.   So $\tjnpr$ is an upper bound for  $\gamma_1$ and $\gamma_2$, and thus $\tjnpr \succeq (\gjg)$.  Therefore, 
  $j = len(\tjnpr) \le len(\gjg) \le j$, so equality holds throughout and len$(\gjg) = j = \min \{j,k\}$.
  
  The second proof is analogous. 
%We know $(\gmg)\preceq \gamma_1$, so $len(\gmg) \ge  len(\gamma_1) = k$ by Lemma~\ref{length-lemma}.
 % By Lemma~\ref{tau-jk-lem} and Lemma~\ref{tau-min-lem}, $\tkn \preceq \tjn \preceq \gamma_2$, and $\tkn \preceq \gamma_1$.   
 %So $\tkn$ is an lower bound for  $\gamma_1$ and $\gamma_2$, and thus $\tkn \preceq (\gmg)$. 
 % Therefore, 
  %$k = len(\tkn) \ge len(\gmg) \ge k$, so equality holds throughout and $len(\gmg) = k = \max \{j,k\}$.
  \end{proof}
  
  \begin{Prop}  The poset $Dis_k(n)$ is a lattice, and a sublattice of $Dis(n)$. 
  \label{dis-k-n-lattice-prop}
  \end{Prop}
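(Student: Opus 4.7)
The plan is to derive this result as a direct consequence of Proposition~\ref{mj-length-prop}, together with the fact (stated just above) that $Dis(n)$ is a lattice. The strategy is to show that the meet and join operations in the ambient lattice $Dis(n)$ always preserve the length condition that defines $Dis_k(n)$, so no new construction of meets or joins is required.

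First, I would take arbitrary elements $\gamma_1, \gamma_2 \in Dis_k(n)$ and form $\gamma_1 \vee \gamma_2$ and $\gamma_1 \wedge \gamma_2$ in the lattice $Dis(n)$. The second sentence of Proposition~\ref{mj-length-prop} gives immediately that
\[\mathrm{len}(\gamma_1 \vee \gamma_2) = \mathrm{len}(\gamma_1 \wedge \gamma_2) = k,\]
so both the meet and join computed in $Dis(n)$ actually lie in $Dis_k(n)$.

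Next, I would argue that these elements serve as the meet and join inside $Dis_k(n)$. Because $Dis_k(n)$ inherits its order from $Dis(n)$, any upper (respectively lower) bound of $\gamma_1$ and $\gamma_2$ inside $Dis_k(n)$ is in particular such a bound inside $Dis(n)$, and is therefore $\succeq \gamma_1 \vee \gamma_2$ (respectively $\preceq \gamma_1 \wedge \gamma_2$). Thus the meet and join in $Dis(n)$ coincide with the meet and join in $Dis_k(n)$, establishing simultaneously that $Dis_k(n)$ is a lattice and that the inclusion $Dis_k(n) \hookrightarrow Dis(n)$ is a sublattice embedding.

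There is essentially no obstacle left at this stage; the entire substance of the proposition was carried out in Proposition~\ref{mj-length-prop}, where the length constraint was shown to be preserved under meets and joins. The proof is therefore just a short bookkeeping argument invoking that result.
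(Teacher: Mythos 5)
Your proposal is correct and follows essentially the same route as the paper: both arguments reduce the proposition to the closure of $Dis_k(n)$ under the meet and join of $Dis(n)$, which is exactly the content of Proposition~\ref{mj-length-prop}. Your extra remark that bounds in the subposet are bounds in the ambient lattice (so the operations genuinely restrict) is a small, welcome bit of added care, but not a different approach.
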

  
  \begin{proof}
  By definition, 
  $Dis_k(n) $ is a subposet of $Dis(n)$. Since $Dis(n)$ is a lattice, any two elements have both a meet and a join. By Proposition~\ref{mj-length-prop}, for any two elements  in $Dis_k(n)$, their  meet and join  are in $Dis_k(n)$. 
  \end{proof}
   
   We end this section with a remark  that follows from the identity   $\binom{k+2}{2} - 2 = \binom{k+1}{2} + k - 1$.  This remark will be useful in describing partitions of the $S$-block lattice in Section~\ref{sec-4}.
 \begin{remark}
 Let $n, k$ be integers so that $n \ge 2$, $k \ge 2$ and $n \ge \binom{k+1}{2}$.
 
{\rm  (i)}  If $n \le \binom{k+2}{2} - 2$ then all elements of $Dis_k(n)$ have last part equal to 1.

{\rm  (ii) }  If $n \ge \binom{k+2}{2} - 1$, then $Dis_k(n)$ contains a partition with last part equal to 2.
 
 %(ii)   If $n \ge \binom{k+2}{2} - 1$, then $Dis_k(n)$ contains the partition $ (t_1, t_2, t_3, \ldots, t_k)$ with $t_k = 2$, $t_{k-1} = 3$, $t_{k-2} = 4$,  \ldots, $t_2=k$ and $t_1 = n - \sum_{i=2}^k t_i$.
 \label{end-in-one-rem}
 \end{remark}
 
   For example,  we see in Figure~\ref{fig-dis-10-12}  that all elements of $Dis_4(12)$ have last part equal to 1 and there exist elements of $Dis_3(12)$ with last part equal to 2.
 
 %\begin{proof}
% First suppose $k \ge 2$ and  $\binom{k+1}{2} \le n \le \binom{k+2}{2} - 2$.  For a contradiction, suppose that $(a_1, a_2, \ldots, a_k) \in Dis_k(n)$ with $a_k \ge 2$.    Since $a_1 > a_2 > \cdots > a_k$ we have $a_{k-1} \ge 3$, $a_{k-2} \ge 4$,  \ldots, $a_1 \ge k+1$.    Then we get the following contraditcion: 
 %$$n = \sum_{i=1}^k a_i = k + \sum_{i=1}^k (a_i -1) \ge k + (1 + 2 + \cdots + k) = k + \binom{k+1}{2} %= \binom{k+1}{1}  + \binom{k+1}{2} - 1
% = \binom{k+2}{2} - 1.$$
 
 %If $n \ge \binom{k+2}{2} - 1$ then 
 %$t_1 = n- \sum_{i=2}^k t_i \ge [\binom{k+2}{2} - 1] - [\binom{k+1}{2} - 1] = k+1$
 %so $\tkn \in Dis_k(n)$.
 %\end{proof}

  \subsection{The poset $S$-$Block(n)$}
  \label{block-sec}
    
 When $n_1$ and $n_2$ are positive integers of the same parity with $n_2 \ge n_1$, 
 we combine elements of $Dis(n_1)$ and $Dis(n_2)$ into ordered pairs that we call \emph{blocks} as follows.

 \begin{Def} {\rm    Let $n_1,n_2$ be positive integers of the same parity and let $\alpha \in Dis(n_1)$ and $\beta \in Dis(n_2)$.    The  ordered pair $[\alpha|\beta]$  forms a  \emph{block}    if  the following three conditions hold.
   
 (i)  $n_2 \ge n_1$
 
 (ii)  $\beta \succeq_w \alpha$
 
 (iii)  len$(\alpha) = $ len$(\beta)$ or len$(\alpha) = $ len$(\beta) + 1.$ 
 
 \smallskip
 
 \noindent
If  in addition, $n_1 = n_2$, then $[\alpha|\beta]$ is a \emph{split-block} (or  \emph{S-block})   and in this case, (ii) is equivalent to  $\beta \succeq \alpha$.

 }
 \label{block-defn}
 \end{Def}

 In addition to the majorization that occurs within a block, there is also  majorization between blocks.

  \begin{Def} {\rm   If  $\alpha_1, \alpha_2 \in Dis(n_1)$ and  $ \beta_1, \beta_2 \in Dis(n_2)$, then 
 $[\alpha_1|\beta_1]$  majorizes $[\alpha_2|\beta_2]$  (denoted $[\alpha_1|\beta_1] \succeq [\alpha_2|\beta_2]$) if $\alpha_1 \succeq \alpha_2$ and 
 $\beta_1 \preceq \beta_2$.
 }
 \label{block-maj-def}
 \end{Def}

  In Section~\ref{sec-6}
  we consider blocks for which $n_1 \neq n_2$;  until then, we study $S$-blocks.
It is not hard to check that  
 for a fixed $n$, the set of $S$-blocks forms a poset under the majorization in Definition~\ref{block-maj-def} (with $n_1=n_2$), and we denote this poset by $S$-$Block(n)$.   The poset $S$-$Block(10)$ is shown in Figure~\ref{fig-s-10}  and is constructed from pairs of elements of  $Dis(10)$.  As shown in Figure~\ref{fig-dis-10-12}(a),  $Dis(10)$ is ranked and has left-right symmetry, and as a result, $S$-$Block(10)$ is also ranked and has left-right symmetry.    However, in general, $Dis(n)$  has neither of these properties.  Figure~\ref{fig-dis-10-12}(b) shows that $Dis(12)$ does not have left-right symmetry and Figure~\ref{fig-dis-10-12}(c) shows that $Dis(14)$  is not ranked.  As a consequence,  $S$-$Block(12)$ does not have left-right symmetry and $S$-$Block(14)$  is not ranked.

   Merris \cite{Me03} studies  a larger  class of posets and
 translates classical characterization results for split graphs and threshold graphs into conditions about split blocks, as we record in Theorems~\ref{split-S-block-thm} and \ref{thresh-S-block-thm}.  
  We provide a short  proof of Theorem~\ref{split-S-block-thm}  that translates between our    terminology and that in \cite{Me03}.

 \begin{Thm}
 The  partition $\pi$ is the degree sequence of a split graph if and only if  $[\alpha(\pi)|\beta(\pi)]$ is an $S$-block.
 \label{split-S-block-thm}
   \end{Thm}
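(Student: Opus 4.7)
The plan is to reduce the theorem to the classical Hammer-Simeone characterization of split graphs, which states that $\pi = (d_1, \ldots, d_r)$ is the degree sequence of a split graph if and only if
\[ \sum_{i=1}^m d_i = m(m-1) + \sum_{i=m+1}^r d_i, \]
where $m = m(\pi)$. The bridge to Definition~\ref{block-defn} is that $\sum \alpha_i = |A(\pi)|$ and $\sum \beta_j = |B(\pi)|$ by construction, so condition (i) of an $S$-block (namely $n_1 = n_2$) amounts to $|A(\pi)| = |B(\pi)|$, which I claim is exactly the Hammer-Simeone equation in disguise.

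To make this precise, I would partition $F(\pi)$ around the central $m \times (m-1)$ rectangle into three disjoint regions: UL (rows $1$ to $m$, columns $1$ to $m-1$), UR (rows $1$ to $m$, columns $\geq m$), and LL (rows $\geq m+1$), where LL lies entirely in columns $\leq m-1$ because $d_{m+1} < m$. A direct count gives $|A(\pi) \cap \mathrm{UL}| = |B(\pi) \cap \mathrm{UL}| = \binom{m}{2}$, while $\mathrm{UR} \subseteq A(\pi)$ and $\mathrm{LL} \subseteq B(\pi)$. Hence $|A(\pi)| - |B(\pi)| = |\mathrm{UR}| - |\mathrm{LL}|$, and since $|\mathrm{UR}| = \sum_{i=1}^m d_i - m(m-1)$ and $|\mathrm{LL}| = \sum_{i>m} d_i$ (using $d_i \le m-1$ for $i > m$), the Hammer-Simeone equation is literally the statement $|A(\pi)| = |B(\pi)|$.

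With this equivalence established, the forward direction follows at once: if $\pi$ is a split graph's degree sequence then Hammer-Simeone delivers condition (i); condition (iii) of Definition~\ref{block-defn} on lengths is exactly Theorem~\ref{length-alpha}; and the weak majorization $\beta(\pi) \succeq_w \alpha(\pi)$ follows for any graphical $\pi$ from the Erd\H{o}s-Gallai inequalities applied at $1 \le k \le m-1$, after rewriting $\sum_{i=1}^k \alpha_i = \sum_{i=1}^k d_i - \binom{k}{2}$ and $\sum_{j=1}^k \beta_j = \sum_i \min(k, d_i) - \binom{k+1}{2}$ and using $d_i \ge k$ for $i \le k$ to reduce to Erd\H{o}s-Gallai. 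The reverse direction is immediate: the $S$-block hypothesis supplies $|A(\pi)| = |B(\pi)|$, which by the geometric equivalence is the Hammer-Simeone equation, so $\pi$ is a split graph's degree sequence. The main obstacle is bookkeeping --- verifying that diagonal cells of UL lie in $A(\pi)$ and not $B(\pi)$, and that $\mathrm{UR}$ and $\mathrm{LL}$ are captured by the claimed formulas --- but this is routine. Since Merris \cite{Me03} proves the equivalence in somewhat different notation, the real content is the translation between the two frameworks.
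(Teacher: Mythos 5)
Your geometric identity equating $|A(\pi)|=|B(\pi)|$ with the Hammer--Simeone equation is correct, and your forward direction is sound; it is a more self-contained version of what the paper does purely by citation (the paper quotes from \cite{CoTr21} that an even-sum $\pi$ is graphic iff $\beta(\pi)\succeq_w\alpha(\pi)$ and that a graphic $\pi$ is split iff $\beta(\pi)\succeq\alpha(\pi)$, plus Theorem~\ref{length-alpha} for the length condition). The genuine gap is in the reverse direction, which you call ``immediate.'' The Hammer--Simeone theorem characterizes split graphs \emph{among graphs}: a graph $G$ is split iff its degree sequence satisfies $\sum_{i\le m}d_i=m(m-1)+\sum_{i>m}d_i$. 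It does not assert that an arbitrary non-increasing sequence satisfying that equation is the degree sequence of any graph at all. Your argument extracts only condition (i) of Definition~\ref{block-defn} (i.e., $|A(\pi)|=|B(\pi)|$) from the $S$-block hypothesis and stops; as written it would prove the false claim that $|A(\pi)|=|B(\pi)|$ alone suffices. A concrete witness: $\pi=(5,5,3,3,3,1)$ has $m=4$, $\alpha(\pi)=(5,4,1)$, $\beta(\pi)=(5,3,2)$, so $|A(\pi)|=|B(\pi)|=10$ and the Hammer--Simeone equation holds ($16=12+4$), yet $\pi$ is not graphic: Erd\H{o}s--Gallai fails at $k=2$ since $d_1+d_2=10>2\cdot 1+2+2+2+1=9$. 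The theorem itself is safe because $[\alpha(\pi)|\beta(\pi)]$ is not an $S$-block here ($\beta\not\succeq\alpha$ at the second partial sum), but your proof of the converse never uses that hypothesis.

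The repair is exactly the step the paper makes explicit: before invoking Hammer--Simeone you must show $\pi$ is graphic, and for that you need condition (ii), $\beta(\pi)\succeq_w\alpha(\pi)$, together with the evenness of $\sum_i d_i=n_1+n_2=2n$. This is the converse of the Erd\H{o}s--Gallai computation you already carried out in the forward direction: your identities $\sum_{i\le k}\alpha_i=\sum_{i\le k}d_i-\binom{k}{2}$ and $\sum_{j\le k}\beta_j=\sum_i\min(k,d_i)-\binom{k+1}{2}$ show that weak majorization at $1\le k\le m-1$ is equivalent to the Erd\H{o}s--Gallai inequalities at those $k$; you must also note that the inequalities for $k\ge m$ come for free from $d_i\le m-1$ for $i>m$ (or simply cite the graphicality criterion in terms of $\alpha$ and $\beta$, as the paper does with Theorem~4.2 of \cite{CoTr21}). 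With that inserted, your translation between the two frameworks goes through.
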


 \begin{proof}
 If $\pi$ is the degree sequence of a split graph, then $\beta(\pi) \succeq \alpha(\pi)$ (see for example, Theorem~4.4 of 
 \cite{CoTr21}).   The first two conditions of Definition~\ref{block-defn} follow immediately, and the third condition follows from Theorem~\ref{length-alpha}.
 
 Conversely, suppose $[\alpha(\pi)|\beta(\pi)]$ is an $S$-block.  By Definition~\ref{block-defn}, we have $\beta(\pi) \succeq \alpha(\pi)$  and $\alpha(\pi), \beta(\pi) \in Dis(n)$ for some integer $n$, thus $\pi$ partitions $2n$.   Since $\beta(\pi) \succeq_w \alpha(\pi)$ and $\pi$ partitions an even integer, we know $\pi$ is graphic (see for example, Theorem~4.2 of \cite{CoTr21}) and then since $\beta(\pi) \succeq \alpha(\pi)$ we conclude that $\pi$ is the degree sequence of a split graph (again, see Theorem~4.4 of 
 \cite{CoTr21}). 
 \end{proof}

  Theorem~\ref{split-S-block-thm} provides our motivation for the name $S$-block and is possible because membership in the class of split graphs is determined by degree sequence.      Similarly, membership in the classes {balanced} split graphs, unbalanced split graphs, threshold graphs, NG-1 graphs, NG-2 graphs, NG-3 graphs, and pseudo-split graphs can also be determined by degree sequences.   The characterization theorem for threshold graphs from \cite{Me03}
   is recorded in Theorem~\ref{thresh-S-block-thm} below and the analogous results for the remaining classes are proven in Theorems~\ref{unbal-block-thm}, \ref{NG-block-thm}, and  \ref{NG3-thm} and Corollary~\ref{pseudo-charac}.
 Merris and Roby \cite{MeRo05} focus on  posets whose elements are $\alpha(\pi)$ where $\pi$ is the degree sequence of a threshold graph;  they refer to  $A(\pi)$ as a \emph{shifted shape}. 

 \begin{Thm}
 The partition $\pi$ is the degree sequence of a  threshold graph if and only if  $[\alpha(\pi)|\beta(\pi)]$ is an $S$-block and $\alpha(\pi) = \beta(\pi)$.
 \label{thresh-S-block-thm}
   \end{Thm}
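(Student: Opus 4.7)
The plan is to prove both directions by leveraging Theorem~\ref{split-S-block-thm} together with the classical characterization of threshold degree sequences via equality of the row/column partitions $\alpha(\pi)$ and $\beta(\pi)$. Since threshold graphs form a subclass of split graphs (Corollary~\ref{thresh-unbal-cor}), membership can be decided by adding one extra condition on top of splitness, namely $\alpha(\pi)=\beta(\pi)$. This is the translation into our language of the Erd\H{o}s--Gallai equality condition that Merris used in \cite{Me03}, and is also recorded in \cite{CoTr21}.

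For the forward direction, I would start by assuming $\pi$ is the degree sequence of a threshold graph $G$. Since $G$ is in particular a split graph, Theorem~\ref{split-S-block-thm} immediately yields that $[\alpha(\pi)|\beta(\pi)]$ is an $S$-block. To obtain the additional equality $\alpha(\pi)=\beta(\pi)$, I would cite the standard characterization: $\pi$ is threshold if and only if the Erd\H{o}s--Gallai inequality is tight at every $k \le m(\pi)$, i.e.\ for every $k$,
\[
\sum_{i=1}^{k} d_i \;=\; k(k-1) + \sum_{i>k} \min(d_i, k).
\]
Reading this combinatorially on $F(\pi)$, the left side counts the boxes in the first $k$ rows, while the right side counts boxes in the first $k$ columns of the central rectangle together with the columns below the diagonal; equivalently, the $k$th row beyond the diagonal in $A(\pi)$ has the same number of boxes as the $k$th column below the diagonal in $B(\pi)$. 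This is exactly $\alpha(\pi)=\beta(\pi)$.

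For the reverse direction, assume $[\alpha(\pi)|\beta(\pi)]$ is an $S$-block with $\alpha(\pi)=\beta(\pi)$. By Theorem~\ref{split-S-block-thm}, $\pi$ is the degree sequence of some split graph $G$. Now reading the equality $\alpha(\pi)=\beta(\pi)$ again as box counts in $F(\pi)$, we recover the tight Erd\H{o}s--Gallai equality for every $k\le m(\pi)$, which by the classical characterization forces $G$ to be threshold. Both implications therefore reduce to a single equivalence between the equality $\alpha(\pi) = \beta(\pi)$ and the tightness of Erd\H{o}s--Gallai at each level, for which I would cite the corresponding result in \cite{CoTr21}.

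The only substantive point is verifying that $\alpha(\pi)=\beta(\pi)$ really does translate to the threshold condition and not, say, merely to balancedness; the potential confusion here is that Theorem~\ref{unbal-block-thm} characterizes balance via equal \emph{lengths} of $\alpha(\pi)$ and $\beta(\pi)$, whereas threshold requires equality of the whole partitions, which is strictly stronger. I would make this contrast explicit, and observe consistently with Corollary~\ref{thresh-unbal-cor} that $\alpha(\pi)=\beta(\pi)$ (same length) forces the graph into the unbalanced regime. This is less an obstacle than a bookkeeping check, so once the classical characterization is invoked the proof is essentially complete.
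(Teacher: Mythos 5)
Your argument is correct, but note that the paper does not actually prove Theorem~\ref{thresh-S-block-thm}: it records the statement as a known result of Merris \cite{Me03}, proving only the companion Theorem~\ref{split-S-block-thm}. So there is no in-paper proof to compare against; what you have written is essentially the proof one would extract from \cite{Me03} or \cite{CoTr21}. Your route --- get the $S$-block property from Theorem~\ref{split-S-block-thm} and then convert the tight Erd\H{o}s--Gallai condition for threshold sequences into the equality $\alpha(\pi)=\beta(\pi)$ --- is sound, because the $k$-th Erd\H{o}s--Gallai inequality is equivalent to $\sum_{i\le k}\alpha_i(\pi)\le\sum_{i\le k}\beta_i(\pi)$ (this is exactly the graphicality criterion the paper invokes from Theorem~4.2 of \cite{CoTr21}), so tightness at every level $k\le m(\pi)$ is equivalent to equality of all partial sums, hence of the partitions themselves. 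One small point to tighten: your sentence ``equivalently, the $k$th row beyond the diagonal in $A(\pi)$ has the same number of boxes as the $k$th column below the diagonal'' conflates equality at a single level $k$ (which only gives equality of partial sums) with part-wise equality; you need tightness at levels $k-1$ and $k$ simultaneously, and the total-sum condition $n_1=n_2$ from the $S$-block definition, to rule out $\mathrm{len}(\alpha)=\mathrm{len}(\beta)+1$ and conclude $\alpha(\pi)=\beta(\pi)$. Your closing observation that this equality is strictly stronger than the equal-lengths condition of Theorem~\ref{unbal-block-thm}, and is consistent with Corollary~\ref{thresh-unbal-cor}, is a worthwhile sanity check.
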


  We refer to an $S$-block $\ab$     as \emph{balanced} if there is a balanced split graph that has degree sequence $\pi$ with $\alpha = \alpha(\pi)$ and $\beta = \beta(\pi)$.  Analogously, we refer to $S$-blocks as being unbalanced, threshold, NG-1, NG-2, NG-3, or pseudo-split.

   We conclude this section by showing that the set of  balanced $S$-blocks is downward closed (i.e., any S-block below a balanced S-block is balanced) and  the set of unbalanced $S$-blocks is upward-closed, i.e.,  any $S$-block above an unbalanced $S$-block is unbalanced.

\begin{Thm} Let $\abone, \abtwo  \in S$-$Block(n)$ with  $\abone \succeq \abtwo$.  If   $\abone$ is balanced, then $\abtwo$ is balanced.  If $\abtwo$ is unbalanced, then $\abone$ is unbalanced.
\label{below-balanced-prop}
\end{Thm}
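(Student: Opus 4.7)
The plan is to reduce the balanced/unbalanced dichotomy to a purely numerical statement about lengths, then compare lengths under block majorization using Lemma~\ref{length-lemma}. By Theorem~\ref{unbal-block-thm} together with clause (iii) of Definition~\ref{block-defn}, an $S$-block $[\alpha|\beta]$ is unbalanced precisely when $\mathrm{len}(\alpha) = \mathrm{len}(\beta)$, and balanced precisely when $\mathrm{len}(\alpha) = \mathrm{len}(\beta)+1$. So the claim becomes a short inequality chase among four integers.

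Unpacking Definition~\ref{block-maj-def}, the hypothesis $[\alpha_1|\beta_1] \succeq [\alpha_2|\beta_2]$ gives $\alpha_1 \succeq \alpha_2$ and $\beta_1 \preceq \beta_2$. Applying Lemma~\ref{length-lemma} to each of these yields
\[\mathrm{len}(\alpha_1) \le \mathrm{len}(\alpha_2) \quad \text{and} \quad \mathrm{len}(\beta_2) \le \mathrm{len}(\beta_1).\]
For the first assertion, if $[\alpha_1|\beta_1]$ is balanced then $\mathrm{len}(\alpha_1) = \mathrm{len}(\beta_1)+1$, and since $[\alpha_2|\beta_2]$ is an $S$-block, $\mathrm{len}(\alpha_2) \le \mathrm{len}(\beta_2)+1$. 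Chaining these facts,
\[\mathrm{len}(\beta_2)+1 \ge \mathrm{len}(\alpha_2) \ge \mathrm{len}(\alpha_1) = \mathrm{len}(\beta_1)+1 \ge \mathrm{len}(\beta_2)+1\]
forces equality throughout, so $\mathrm{len}(\alpha_2) = \mathrm{len}(\beta_2)+1$ and $[\alpha_2|\beta_2]$ is balanced. The second assertion is logically equivalent once one observes that the two cases in Definition~\ref{block-defn}(iii) are exhaustive, but it can also be derived directly from the symmetric chain
\[\mathrm{len}(\beta_1) \le \mathrm{len}(\alpha_1) \le \mathrm{len}(\alpha_2) = \mathrm{len}(\beta_2) \le \mathrm{len}(\beta_1).\]

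No step looks deep; the only subtlety worth flagging is the orientation on the $\beta$ side. Because block majorization reverses on the second coordinate (Definition~\ref{block-maj-def}), Lemma~\ref{length-lemma} gives $\mathrm{len}(\beta_2) \le \mathrm{len}(\beta_1)$ rather than the other direction, which is exactly what is needed to close the inequality chain. Beyond getting this direction correct, the argument is immediate.
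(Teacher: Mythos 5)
Your proof is correct and follows essentially the same route as the paper's: both reduce the question to lengths via Theorem~\ref{unbal-block-thm}, apply Lemma~\ref{length-lemma} to the two coordinates of the block majorization (with the reversed orientation on the $\beta$ side), and close the inequality chain using clause (iii) of Definition~\ref{block-defn}, obtaining the second assertion as the contrapositive. No gaps.
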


\begin{proof}
Using Lemma~\ref{length-lemma}
we know len$(\alpha_1) \le $ len$(\alpha_2)$ and len$(\beta_1) \ge $ len$(\beta_2)$.   
In the case that  $\abone$ is balanced,   Theorem~\ref{unbal-block-thm} implies that 
 len$(\alpha_1) = $ len$(\beta_1) + 1$.     Thus
len$(\alpha_2) \ge $ len$(\alpha_1) = $ len$(\beta_1) + 1 \ge $ len$(\beta_2) + 1$.  Since $\abtwo$ is an $S$-block, by Definition~\ref{block-defn}
we  conclude len$(\alpha_2)  = $ len$(\beta_2) + 1$ and thus $\abtwo $ is balanced by  
 Theorem~\ref{unbal-block-thm}.  The last sentence  then follows  as the contrapositive. %from the fact that $S$-blocks are  either balanced or unbalanced.
\end{proof}

  \section{Partitioning $S$-$Block(n)$ into Amphoras}
   \label{sec-4}

In this section, we study the structure of $S$-$Block(n)$ and provide our main theorems which link subposets called amphoras to sets of $S$-blocks representing subclasses of split graphs.
Amphoras are ceramic pots used in ancient Greece to transport oils and other liquids in shipping.  They have a pointed bottom and are wider in the middle and at the top, but do not always have a cone shape.    Their shape allowed for them to be packed in the hold of ships in two layers so that there was minimal movement during travel.  In a similar manner, our amphoras will each have a unique minimum element, an antichain of maximal elements, and contain everything between the minimum element and one or more of the maximal elements. Figure~\ref{fig-s-10}  shows $S$-$Block(10)$ partitioned into amphoras outlined in solid and dotted lines.  
  
 \begin{Def} {\rm 
 Given a poset $P$, a  subposet $P'$   forms an \emph{amphora} if  there exists an antichain $A$ in $P$ and an element $x$ less than  or equal to every element of $A$ so that $P'$ consists of all elements of $P$ that are greater than or equal to $x$ and less than or equal to some element of $A$. 
}
\label{amphora-def}
 \end{Def}

 Equivalently, we can think of an amphora as a union of intervals in a poset where the bottom element  of each interval is the same  and the top elements are the members of an antichain.  
We are interested in  amphoras that partition  the poset $S$-$Block(n)$ %and also the sets of unbalanced and balanced elements of $S$-$Block(n)$  
according to length. 
 For each $k$ (with $n \ge \binom{k+1}{2}$)  we will show that there is a unique minimal unbalanced $S$-block  $\ab$ where len$(\alpha) = $  len$(\beta) = k$.  Similarly, there is a unique minimal balanced $S$-block  $\ab$ where len$(\alpha) =k$ and  len$(\beta) = k-1$.
These minimal elements will not be \emph{minimum} in the full poset $S$-$Block(n)$, but will be within certain amphoras.
    Our partition will result in two layers, with balanced $S$-blocks on the bottom and unbalanced $S$-blocks above.   In Figure~\ref{fig-s-10}, the amphoras for balanced $S$-blocks are outlined in blue dotted lines and those for unbalanced $S$-blocks are outlined in solid yellow lines.  We define  the  sets  $A(n,k)$   and  $A(n,k,k-1)$ in  Definitions~\ref{Ak-def}  and \ref{Akk-def} and show that they are amphoras and that they   partition the unbalanced $S$-blocks and the balanced $S$-blocks respectively.

\subsection{The Amphoras $A(n,k)$ and $A(n,k,k-1)$}

The   maximal elements of $S$-$Block(n)$ are characterized in Lemma~\ref{max-elts-lem} and correspond to threshold graphs by Theorem~\ref{thresh-S-block-thm}.

\begin{lemma}
The  set of maximal elements of $S$-$Block(n)$    is $\{[\gamma|\gamma]: \gamma \in Dis(n)\}.$
\label{max-elts-lem}
\end{lemma}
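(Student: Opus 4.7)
The plan is to prove both containments directly from the definitions of $S$-block (Definition~\ref{block-defn} with $n_1=n_2=n$) and block majorization (Definition~\ref{block-maj-def}). The key tools are just the two observations that (a) $[\gamma|\gamma]$ is always a valid $S$-block for $\gamma \in Dis(n)$, since $\gamma \succeq \gamma$ and the length condition is trivially satisfied, and (b) for any $S$-block $[\alpha|\beta]$, the pair $[\beta|\beta]$ is also an $S$-block for the same reason.

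For the inclusion $\{[\gamma|\gamma]\} \subseteq \max S\text{-}Block(n)$, I would fix $\gamma \in Dis(n)$ and suppose that some $S$-block $[\alpha'|\beta']$ satisfies $[\alpha'|\beta'] \succeq [\gamma|\gamma]$. Unpacking Definition~\ref{block-maj-def} gives $\alpha' \succeq \gamma$ and $\beta' \preceq \gamma$. Since $[\alpha'|\beta']$ is an $S$-block, Definition~\ref{block-defn}(ii) gives $\beta' \succeq \alpha'$. Chaining these yields $\gamma \preceq \alpha' \preceq \beta' \preceq \gamma$, and since $\succeq$ is antisymmetric on $Dis(n)$, all inequalities collapse to equalities, forcing $\alpha' = \beta' = \gamma$. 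Hence $[\gamma|\gamma]$ is maximal.

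For the reverse inclusion, I would take a maximal $S$-block $[\alpha|\beta]$ and compare it to $[\beta|\beta]$. By observation (b), $[\beta|\beta]$ is an $S$-block, and by Definition~\ref{block-maj-def} the relation $[\beta|\beta] \succeq [\alpha|\beta]$ holds because $\beta \succeq \alpha$ (from $[\alpha|\beta]$ being an $S$-block) and $\beta \preceq \beta$ trivially. Maximality of $[\alpha|\beta]$ then forces $[\alpha|\beta] = [\beta|\beta]$, so $\alpha = \beta \in Dis(n)$, giving the desired form.

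There is no real obstacle here; the lemma is essentially a definitional unwinding. The only thing to be careful about is not to confuse the direction of majorization in the second coordinate of a block (where smaller $\beta$ corresponds to a larger block) with majorization in $Dis(n)$, but once Definition~\ref{block-maj-def} is applied correctly the argument is a two-line antisymmetry chase in each direction.
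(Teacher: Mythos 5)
Your proof is correct and follows essentially the same argument as the paper: the same antisymmetry chase $\gamma \preceq \alpha' \preceq \beta' \preceq \gamma$ for maximality of $[\gamma|\gamma]$, and a comparison with a diagonal block for the reverse inclusion (the paper uses $[\alpha|\alpha]$ where you use $[\beta|\beta]$, an immaterial difference since both dominate $[\alpha|\beta]$).
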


\begin{proof}
 Suppose $ \gamma \in Dis(n)$ and $\ab \in S$-$Block(n)$ with  $\ab \succeq [\gamma|\gamma]$.  By Definition~\ref{block-defn}, $\beta \succeq \alpha$, and by  Definition~\ref{block-maj-def}, $\alpha \succeq \gamma \succeq \beta$, thus $\alpha = \beta  = \gamma$ and  $[\gamma|\gamma]$ is maximal   in $S$-$Block(n)$.  For any  $\abpr \in S$-$Block(n)$,  Definition~\ref{block-defn} implies that  $\abpr \preceq [\alpha'|\alpha'] $, thus the only maximal elements of $S$-$Block(n)$   are those of the form $[\gamma|\gamma]$.
\end{proof}

 We partition the  antichain of maximal elements of $S$-$Block(n)$ according to length as follows.

\begin{Def}
$(Max)^n_{k} = \{ [\gamma|\gamma] \in S$-Block(n):  $\gamma \in Dis_k(n) \}$

\end{Def}

   In the next definition, we give a  partition of the   unbalanced  elements of $S$-$Block(n)$, and Theorem~\ref{Ak-thm}  shows that these sets are amphoras and characterizes their minimum and maximal elements.

\begin{Def} {\rm
For $n \ge \binom{k+1}{2}$, define  $A(n,k)$ to be the subposet of  $S$-$Block(n)$ consisting of all  (unbalanced) $S$-blocks $\ab$ where $\alpha, \beta \in Dis_k(n)$. }
\label{Ak-def}
\end{Def}

%Like the ceramic pots, our amphora have a pointed bottom and a wider top, as shown in the  characterization results of Propositions~\ref{Ak-prop} and \ref{Akk-prop}.
\noindent For example, $A(10,3)$ is outlined in solid yellow in Figure~\ref{fig-s-10} and has maximal elements $[5,3,2 \, | \, 5,3,2],$ $ [5,4,1 \, | \, 5,4,1], \  [6,3,1 \, | \, 6,3,1],  \ [7,2,1 \, | \, 7,2,1]$. It has minimum element $[5,3,2 \, | \, 7,2,1]$.   Note that  $[5,3,2 \, | \, 7,2,1]$ is minimum in $A(10,3)$ but not in $S$-$Block(10)$.  More generally,   $A(n,k)$ has minimum element $\tkntknpr$, where $\tkn$ and $\tknpr$ are defined in Definitions~\ref{tau-max-def} and \ref{tau-min-def}.

\begin{Thm}
The set $A(n,k)$  of $S$-Block(n)  is an amphora  with minimum element
$\tkntknpr$
and whose antichain of maximal elements is  $(Max)^n_{k}.$
\label{Ak-thm}
\end{Thm}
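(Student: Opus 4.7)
The plan is to check Definition~\ref{amphora-def} directly with $x = \tkntknpr$ and antichain $A = (Max)^n_k$.  This requires three things: that $(Max)^n_k$ is an antichain in $S$-$Block(n)$, that $x$ lies below every element of $(Max)^n_k$, and that $A(n,k)$ coincides with the union of intervals $\{y \in S\text{-}Block(n) : x \preceq y \preceq a \text{ for some } a \in A\}$.  The minimum-element and maximal-antichain claims of the theorem then follow at once.

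First I would dispatch the elementary parts.  Since $\tkn, \tknpr \in Dis_k(n)$ both have length $k$ and $\tknpr \succeq \tkn$ by Lemmas~\ref{tau-pr-max-lem} and \ref{tau-min-lem}, the pair $\tkntknpr$ satisfies Definition~\ref{block-defn} and belongs to $A(n,k)$.  For each $\gamma \in Dis_k(n)$, those same two lemmas give $\tkn \preceq \gamma \preceq \tknpr$, and Definition~\ref{block-maj-def} then yields $[\gamma|\gamma] \succeq \tkntknpr$; hence $x$ lies below every element of $(Max)^n_k$.  That $(Max)^n_k$ is an antichain is immediate from Lemma~\ref{max-elts-lem}, since each $[\gamma|\gamma]$ is already maximal in all of $S$-$Block(n)$.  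Also, for any $\ab \in A(n,k)$ we have $\ab \preceq [\alpha|\alpha]$ (because $\alpha \succeq \alpha$ trivially and $\beta \succeq \alpha$ by Definition~\ref{block-defn}(ii)), and $[\alpha|\alpha] \in (Max)^n_k$, so $A(n,k)$ is contained in the candidate interval-union.

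The main step is the reverse inclusion: if $\ab \in S$-$Block(n)$ satisfies $\tkntknpr \preceq \ab \preceq [\gamma|\gamma]$ for some $\gamma \in Dis_k(n)$, I need to conclude that $\ab \in A(n,k)$.  Unpacking Definition~\ref{block-maj-def} gives the chains $\tkn \preceq \alpha \preceq \gamma$ and $\gamma \preceq \beta \preceq \tknpr$ in $Dis(n)$.  Applying Lemma~\ref{length-lemma} along each chain sandwiches the lengths, forcing $\text{len}(\alpha) = \text{len}(\beta) = k$, so $\alpha, \beta \in Dis_k(n)$ and $\ab \in A(n,k)$.  This length-pinching is the only subtle point; once it is in place, $A(n,k)$ is an amphora with minimum $\tkntknpr$, and the set of maximal elements of $A(n,k)$ is exactly $(Max)^n_k$ because every element of $A(n,k)$ lies under some $[\gamma|\gamma] \in (Max)^n_k \subseteq A(n,k)$ and each such $[\gamma|\gamma]$ is already maximal in $S$-$Block(n)$.
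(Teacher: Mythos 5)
Your proposal is correct and follows essentially the same route as the paper's proof: both directions of the set equality are handled identically, with Lemmas~\ref{tau-pr-max-lem} and \ref{tau-min-lem} giving the bounds and Lemma~\ref{length-lemma} pinching the lengths of $\alpha$ and $\beta$ to force membership in $Dis_k(n)$. The only difference is that you explicitly verify the antichain property of $(Max)^n_k$ and that $\tkntknpr$ is itself an $S$-block, details the paper leaves implicit.
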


\begin{proof}
First consider $\ab \in A(n,k)$, so $\beta \succeq \alpha$ and $\alpha, \beta \in Dis_k(n)$.  By Definition~\ref{block-maj-def}, $\ab$ is majorized by the elements 
 $[\alpha|\alpha]$ and $[\beta|\beta]$ of $(Max)^n_{k}$, and by Lemmas~\ref{tau-pr-max-lem}, \ref{tau-min-lem}, and Definition~\ref{block-maj-def}, $\ab$ majorizes $\tkntknpr$.
 
 Conversely, let $\ab$ be an $S$-block that majorizes $\tkntknpr$ and is majorized by an element of $(Max)^n_{k}$.  Since $\tkn$, $\tknpr$, and all elements of $(Max)^n_{k}$ are in $Dis_k(n)$, Lemma~\ref{length-lemma} implies that $\alpha,\beta \in Dis_k(n)$ and thus $\ab \in A(n,k)$.
 \end{proof}

 As a consequence of the proof of Theorem~\ref{Ak-thm}, we can specify   the set of all minimal unbalanced split blocks as follows.
 
\begin{Cor}
In   $S$-$Block(n)$,  the  minimal unbalanced split blocks are the elements 
$  \tkntknpr $ for which   $n \ge \binom{k+1}{2}.$

\end{Cor}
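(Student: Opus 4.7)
The plan is to use the amphora structure of $A(n,k)$ established in Theorem~\ref{Ak-thm} together with the length criterion for unbalanced $S$-blocks from Theorem~\ref{unbal-block-thm}. The key observation is that the unbalanced $S$-blocks in $S$-$Block(n)$ are precisely the disjoint union $\bigcup_k A(n,k)$, where $k$ ranges over integers with $n \ge \binom{k+1}{2}$. Indeed, by Theorem~\ref{unbal-block-thm}, an $S$-block $[\alpha|\beta]$ is unbalanced iff $\text{len}(\alpha) = \text{len}(\beta)$, and calling this common length $k$ places the block in $A(n,k)$ by Definition~\ref{Ak-def}.

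First I would show each $[\tau_k(n)|\tau'_k(n)]$ is minimal among unbalanced $S$-blocks. Suppose $[\alpha|\beta]$ is unbalanced with $[\alpha|\beta] \preceq [\tau_k(n)|\tau'_k(n)]$. Unwinding Definition~\ref{block-maj-def}, $\alpha \preceq \tau_k(n)$ and $\beta \succeq \tau'_k(n)$, so Lemma~\ref{length-lemma} gives $\text{len}(\alpha) \ge k$ and $\text{len}(\beta) \le k$. Since $[\alpha|\beta]$ is unbalanced, $\text{len}(\alpha) = \text{len}(\beta)$, forcing both equal to $k$ and hence $[\alpha|\beta] \in A(n,k)$. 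But by Theorem~\ref{Ak-thm}, $[\tau_k(n)|\tau'_k(n)]$ is the minimum of $A(n,k)$, so $[\alpha|\beta] \succeq [\tau_k(n)|\tau'_k(n)]$, which together with the reverse inequality gives equality.

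Second, I would show every minimal unbalanced $S$-block has this form. Given any unbalanced $[\alpha|\beta]$, by the length criterion $\text{len}(\alpha) = \text{len}(\beta) = k$ for some $k \ge 1$ with $n \ge \binom{k+1}{2}$ (the latter because $Dis_k(n)$ must be nonempty to contain $\alpha,\beta$). Then $[\alpha|\beta] \in A(n,k)$ and Theorem~\ref{Ak-thm} gives $[\alpha|\beta] \succeq [\tau_k(n)|\tau'_k(n)]$. If $[\alpha|\beta]$ is minimal in the unbalanced poset, and since $[\tau_k(n)|\tau'_k(n)]$ is itself unbalanced (being in $A(n,k)$), we must have equality.

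No step here looks like a serious obstacle; the work was already done in proving Theorem~\ref{Ak-thm}. The only mild subtlety is making sure to cross between length equality (the unbalanced condition) and block majorization (which reverses length on the $\beta$-side), which is handled cleanly by invoking Lemma~\ref{length-lemma} on each coordinate of the block.
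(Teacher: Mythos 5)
Your proof is correct and follows essentially the same route as the paper, which derives this corollary directly from the proof of Theorem~\ref{Ak-thm}: the unbalanced $S$-blocks are exactly $\bigcup_k A(n,k)$ by the length criterion of Theorem~\ref{unbal-block-thm}, and each amphora $A(n,k)$ has minimum $[\tau_k(n)|\tau'_k(n)]$. Your extra step using Lemma~\ref{length-lemma} to show that any unbalanced block below $[\tau_k(n)|\tau'_k(n)]$ must itself lie in $A(n,k)$ is exactly the detail the paper leaves implicit, and it is handled correctly.
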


 In order to describe the antichain of maximal elements of the amphoras for 
  balanced $S$-blocks we refer to the covering relation in the posets $Dis(n)$ and $S$-$Block(n)$.

 \small 
\begin{figure}[h]
\begin{center}
\begin{tikzpicture}[scale=.3]

\draw[yellow, ultra thick,fill=yellow!5] (-3.5,100.5) -- (-1.2,98.5) --(-1,98.5)-- (1.5,100.5) -- (-1.2,101.5)--(-1.5,101.5)--cycle;: 
\draw[yellow, ultra thick,fill=yellow!5] (2,100.5) -- (10,88) --(10.2,88)-- (22,100.5) -- (14,102.5)--(9,102.5)--cycle;: 
\draw[yellow, ultra thick,fill=yellow!5] (24,100.5) -- (33,89) --(33.2,89)-- (42,100.5) -- (35,102.5)--(30,102.5)--cycle;: 
\draw[yellow, ultra thick,fill=yellow!5] (43,100.5) -- (45,98) --(45.2,98)-- (47,100.5) -- (45.6,101.5)--(45.5,101.5)--cycle;: 

%\draw[pink, dotted, ultra thick,fill=pink!5] (-3,97) -- (7.3,82) --(7.6,82)-- (8.3,90)--(2.7,98.5) -- (0,98.5)--cycle;: 
%\draw[pink, dotted, ultra thick,fill=pink!5] (13,90.5) -- (21,78) --(21.2,78)-- (32,90.5) -- (25.3,98)--(20,98)--cycle;: 
%\draw[pink, dotted, ultra thick,fill=pink!5] (40,97) -- (35,90)--(33,85) --(33.5,85)-- (42,90)--(46,97)--(44,98.5) -- (43.5,98.5)--cycle;: 

\draw[blue, dotted, ultra thick,fill=blue!3] (-4,97) -- (2.5,87)--(8,85) --(8.6,85)-- (8.3,90)--(2.7,98.5) -- (0,98.5)--cycle;: 
\draw[blue, dotted, ultra thick,fill=blue!3] (13.5,91) -- (21,78) --(21.2,78)-- (31,91) -- (25.3,98)--(20,98)--cycle;: 
\draw[blue, dotted, ultra thick,fill=blue!3] (40,97) -- (35,90)--(33,85) --(33.8,85)-- (42,90)--(46,97)--(44,98.5) -- (43.5,98.5)--cycle;:

\tiny \color{red} 
\node at (-1,100) {
${\tiny   \color{red}\left[\begin{array}{c}
\color{black} \mbox{4,3,2,1} \\
\color{cyan}\mbox{4,3,2,1}
\end{array}\right]}$
};
\node at (5,100) {
${\tiny \left[\begin{array}{c}
\color{purple}\mbox{5,3,2} \\
\color{black} \mbox{5,3,2}
\end{array}\right]}$
};
\node at (10,100) {
${\tiny  \left[\begin{array}{c}
\color{black} \mbox{5,4,1} \\
\color{cyan}\mbox{5,4,1}
\end{array}\right]}$
};
\node at (15,100) {
${\tiny \left[\begin{array}{c}
\color{black} \mbox{6,3,1} \\
\color{cyan}\mbox{6,3,1}
\end{array}\right]}$
};

\node at (19,100) {
${\tiny  \left[\begin{array}{c}
\color{black} \mbox{7,2,1} \\
\color{cyan}\mbox{7,2,1}
\end{array}\right]}$
};
\node at (26,100) {
${\tiny \left[\begin{array}{c}
\color{purple}\mbox{6,4} \\
\color{black} \mbox{6,4}
\end{array}\right]}$
};
\node at (30,100) {
${\tiny  \left[\begin{array}{c}
\color{purple}\mbox{7,3} \\
\color{black} \mbox{7,3}
\end{array}\right]}$
};
\node at (35,100) {
${\tiny  \left[\begin{array}{c}
\color{purple}\mbox{8,2} \\
\color{black} \mbox{8,2}
\end{array}\right]}$
};
\node at (40,100) {
${\tiny \left[\begin{array}{c}
\color{black} \mbox{9,1} \\
\color{cyan}\mbox{9,1}
\end{array}\right]}$
};
\node at (45,100) {
${\tiny \left[\begin{array}{c}
\color{purple}\mbox{10} \\
\color{black} \mbox{10}
\end{array}\right]}$
};

\color{black} 
%%%%)
\node at (0,97) {
${\tiny \left[\begin{array}{c}
\mbox{4,3,2,1} \\
\mbox{5,3,2}
\end{array}\right]}$
};
\node at (6,97) {
${\tiny \left[\begin{array}{c}
\color{purple}\mbox{5,3,2} \\
\color{cyan}\mbox{5,4,1}
\end{array}\right]}$
};
\node at (11,97) {
${\tiny \left[\begin{array}{c}
\mbox{5,4,1} \\
\color{cyan}\mbox{6,3,1}
\end{array}\right]}$
};
\node at (16,97) {
${\tiny \left[\begin{array}{c}
\mbox{6,3,1} \\
\color{cyan}\mbox{7,2,1}
\end{array}\right]}$

};\node at (21,97) {
${\tiny \left[\begin{array}{c}
\mbox{6,3,1} \\
\mbox{6,4}
\end{array}\right]}$
};
\node at (24.5,97) {
${\tiny \left[\begin{array}{c}
\mbox{7,2,1} \\
\mbox{7,3}
\end{array}\right]}$
};
\node at (29,97) {
${\tiny \left[\begin{array}{c}
\color{purple}\mbox{6,4} \\
\mbox{7,3}
\end{array}\right]}$
};
\node at (34,97) {
${\tiny \left[\begin{array}{c}
\color{purple}\mbox{7,3} \\
\mbox{8,2}
\end{array}\right]}$
};\node at (37.5,97) {
${\tiny \left[\begin{array}{c}
\color{purple}\mbox{8,2} \\
\color{cyan}\mbox{9,1}
\end{array}\right]}$
};
\node at (43.5,97) {
${\tiny \left[\begin{array}{c}
\mbox{9,1} \\
\mbox{10}
\end{array}\right]}$
};

%%%%%%%%

\node at (1,94) {
${\tiny \left[\begin{array}{c}
\mbox{4,3,2,1} \\
\mbox{5,4,1}
\end{array}\right]}$
};
\node at (8,94) {
${\tiny \left[\begin{array}{c}
\color{purple}\mbox{5,3,2} \\
\color{cyan}\mbox{6,3,1}
\end{array}\right]}$
};
\node at (13,94) {
${\tiny \left[\begin{array}{c}
\mbox{5,4,1} \\
\color{cyan}\mbox{7,2,1}
\end{array}\right]}$
};
\node at (18.5,94) {
${\tiny \left[\begin{array}{c}
\mbox{5,4,1} \\
\mbox{6,4}
\end{array}\right]}$
};
\node at (22.2,94) {
${\tiny \left[\begin{array}{c}
\mbox{6,3,1} \\
\mbox{7,3}
\end{array}\right]}$
};

\node at (26,94) {
${\tiny \left[\begin{array}{c}
\mbox{7,2,1} \\
\mbox{8,2}
\end{array}\right]}$
};
\node at (31.5,94) {
${\tiny \left[\begin{array}{c}
\color{purple}\mbox{6,4} \\
\mbox{8,2}
\end{array}\right]}$
};
\node at (35,94) {
${\tiny \left[\begin{array}{c}
\color{purple}\mbox{7,3} \\
\color{cyan} \mbox{9,1}
\end{array}\right]}$
};
\node at (41,94) {
${\tiny \left[\begin{array}{c}
\mbox{8,2} \\
\mbox{10}
\end{array}\right]}$
};

%%%%%%%%%%%%%

\node at (4,91) {
${\tiny \left[\begin{array}{c}
\mbox{4,3,2,1} \\
\mbox{6,3,1}
\end{array}\right]}$
};
\node at (10.5,91) {
${\tiny \left[\begin{array}{c}
\color{purple} \mbox{5,3,2} \\
\color{cyan}\mbox{7,2,1}
\end{array}\right]}$
};
\node at (16,91) {
${\tiny \left[\begin{array}{c}
\mbox{5,3,2} \\
\mbox{6,4}
\end{array}\right]}$
};
\node at (20.4,91) {
${\tiny \left[\begin{array}{c}
\mbox{5,4,1} \\
\mbox{7,3}
\end{array}\right]}$
};

\node at (25,91) {
${\tiny \left[\begin{array}{c}
\mbox{6,3,1} \\
\mbox{8,2}
\end{array}\right]}$
};
\node at (28.8,91) {
${\tiny \left[\begin{array}{c}
\mbox{7,2,1} \\
\mbox{9,1}
\end{array}\right]}$
};
\node at (33,91) {
${\tiny \left[\begin{array}{c}
\color{purple}\mbox{6,4} \\
\color{cyan} \mbox{9,1}
\end{array}\right]}$
};
\node at (39,91) {
${\tiny \left[\begin{array}{c}
\mbox{7,3} \\
\mbox{10}
\end{array}\right]}$
};

%%%%%%%%%%%%
\node at (6,88) {
${\tiny \left[\begin{array}{c}
\mbox{4,3,2,1} \\
\mbox{7,2,1}
\end{array}\right]}$
};

\node at (17.3,88) {
${\tiny \left[\begin{array}{c}
\mbox{5,3,2} \\
\mbox{7,3}
\end{array}\right]}$
};
\node at (22,88) {
${\tiny \left[\begin{array}{c}
\mbox{5,4,1} \\
\mbox{8,2}
\end{array}\right]}$
};

\node at (26.7,88) {
${\tiny \left[\begin{array}{c}
\mbox{6,3,1} \\
\mbox{9,1}
\end{array}\right]}$
};

\node at (36,88) {
${\tiny \left[\begin{array}{c}
\mbox{6,4} \\
\mbox{10}
\end{array}\right]}$
};

\node at (19,85) {
${\tiny \left[\begin{array}{c}
\mbox{5,3,2} \\
\mbox{8,2}
\end{array}\right]}$
};

\node at (24.5,85) {
${\tiny \left[\begin{array}{c}
\mbox{5,4,1} \\
\mbox{9,1}
\end{array}\right]}$
};

\node at (22,82) {
${\tiny \left[\begin{array}{c}
\mbox{5,3,2} \\
\mbox{9,1}
\end{array}\right]}$
};
%%%%%%%%%

{ \color{teal}
\draw[thick] (-1.3,99)--(0.5,98);
\draw[thick] (5,99)--(6,98);
\draw[thick] (10,99)--(11,98);
\draw[thick] (15,99)--(16,98);
\draw[thick] (15.5,99)--(20.5,98.2);
\draw[thick] (19.3,99)--(23.8,98.3);
\draw[thick] (26.5,99)--(28.5,98);
\draw[thick] (30.5,99)--(33.5,98);
\draw[thick] (35.2,99)--(37.5,98);
\draw[thick] (40,99)--(43.3,98);
\draw[thick] (0,96)--(1,95);
\draw[thick] (6,96)--(7.5,95);
\draw[thick] (10.5,96)--(12.3,95);
\draw[thick] (11.5,96)--(18.2,95);
\draw[thick] (16.5,96)--(21.2,95);
\draw[thick] (21,96)--(21.7,95);
\draw[thick] (24.5,96)--(25.5,95);
\draw[thick] (29,96)--(31.3,95);
\draw[thick] (34.3,96)--(34.8,95);
\draw[thick] (38,96)--(40.5,95);
\draw[thick] (2,93)--(3,92);
\draw[thick] (8,93)--(10,92);
\draw[thick] (8.6,93)--(15.5,92);
\draw[thick] (13,93)--(19.5,92);
\draw[thick] (18.3,93)--(20.5,92);
\draw[thick] (22,93)--(24.5,92);
\draw[thick] (26.2,93)--(28.5,92);
\draw[thick] (31.5,93)--(33,92);
\draw[thick] (35.5,93)--(39,92);

\draw[thick] (4.2,90)--(5.5,89);
\draw[thick] (11.2,90)--(16.3,89);
\draw[thick] (16,90)--(17,89);
\draw[thick] (20.8,90)--(21.8,89);
\draw[thick] (25.7,90)--(26.7,89);
\draw[thick] (33,90)--(36,89);

\draw[thick] (17,87)--(18.5,86);
\draw[thick] (22,87)--(24.5,86);
\draw[thick] (19,84)--(21.5,83);

}

{ \color{violet}
\draw[thick] (4.5,99)--(1,98);
\draw[thick] (9.5,99)--(6.5,98);
\draw[thick] (14.5,99)--(11.5,98);
\draw[thick] (19,99)--(16.5,98);
\draw[thick] (25.3,99)--(21.3,98.2);
\draw[thick] (29.5,99)--(24.8,98.2);
\draw[thick] (30,99)--(29,98);
\draw[thick] (35,99)--(34,98);
\draw[thick] (39.5,99)--(38,98);
\draw[thick] (44.5,99)--(44,98);

\draw[thick] (5.4,96)--(2.2,95);
\draw[thick] (10,96)--(8,95);
\draw[thick] (15.3,96)--(13,95);
\draw[thick] (20,96)--(18.5,95);
\draw[thick] (24,96)--(22,95);
\draw[thick] (28.5,96)--(23,95);
\draw[thick] (33.2,96)--(26.2,95);
\draw[thick] (33.9,96)--(32,95);
\draw[thick] (37.2,96)--(35.5,95);
\draw[thick] (43,96)--(41.2,95);
\draw[thick] (7.5,93)--(5,92);
\draw[thick] (12.5,93)--(11,92);
\draw[thick] (17.8,93)--(16,92);
\draw[thick] (21.5,93)--(20.7,92);
\draw[thick] (25.7,93)--(25,92);
\draw[thick] (31,93)--(25.5,92);
\draw[thick] (34.5,93)--(29,92);
\draw[thick] (35,93)--(33.4,92);
\draw[thick] (41,93)--(39.3,92);
\draw[thick] (10.5,90)--(6,89);
\draw[thick] (20,90)--(17.3,89);
\draw[thick] (25,90)--(22.5,89);
\draw[thick] (28.2,90)--(27.2,89);
\draw[thick] (32.5,90)--(27.7,89);
\draw[thick] (38.8,90)--(36.5,89);

\draw[thick] (21.5,87)--(19.4,86);
\draw[thick] (27,87)--(25.5,86);
\draw[thick] (25,84)--(22.5,83);
}

%\draw[thick] (2,98)--(4.5,99);
%\draw[thick] (6,98)--(7,99);
\end{tikzpicture}

\caption{The poset $S$-$Block(10)$ partitioned into amphoras. Unbalanced amphoras are outlined in a solid line and balanced amphoras are outlined in a dashed line.}
\label{fig-s-10}
\end{center}
\end{figure}
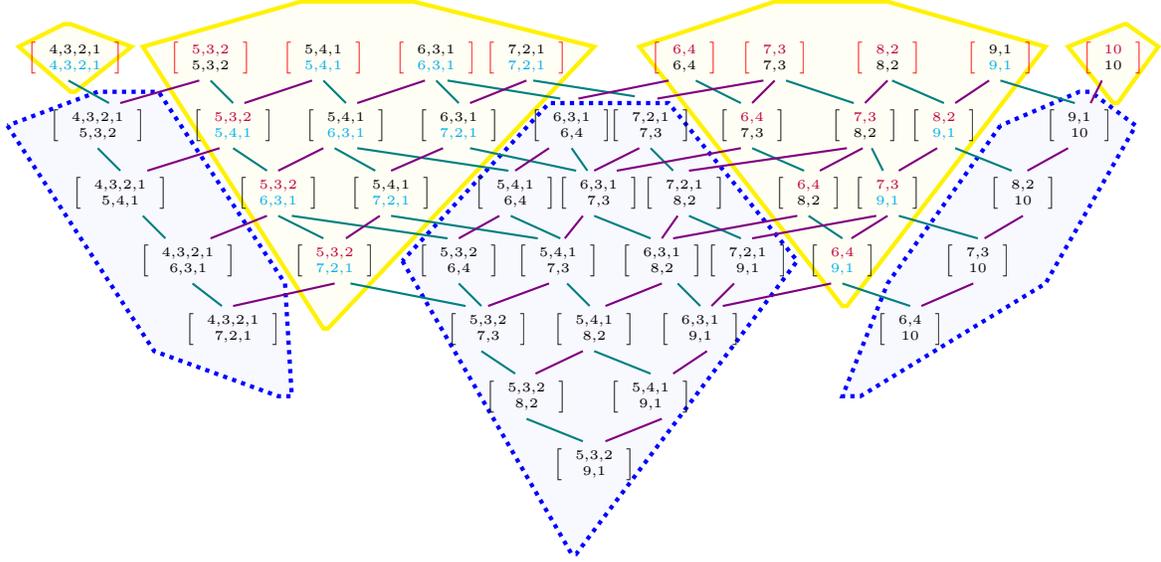

We denote  by $\gtrdot$ the  covering relation    in $Dis(n)$ and  $S$-$Block(n)$.
Observe that in Figure~\ref{fig-s-10}, when one block covers another in $S$-$Block(10)$, either the first components of the blocks are equal and the second components are a covering relation in $Dis(n)$, or vice versa.  We make this more precise in Proposition~\ref{covering-prop}.

  \begin{Prop}
  Let $\abone, \abtwo \in S$-$Block(n)$ for some integer $n$.  If $\abone \gtrdot \abtwo$ then either  (i) $\alpha_1 = \alpha_2$  and $\beta_1 \lessdot \beta_2$ or (ii) $\beta_1 = \beta_2$  and $\alpha_1 \gtrdot \alpha_2$.
  \label{covering-prop}
  \end{Prop}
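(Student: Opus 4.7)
The plan is to argue by contradiction, producing an intermediate $S$-block whenever neither conclusion (i) nor (ii) holds. By Definition~\ref{block-maj-def}, the relation $\abone \succ \abtwo$ unpacks to $\alpha_1 \succeq \alpha_2$ and $\beta_1 \preceq \beta_2$, with at least one of these inequalities strict, and all of $\alpha_1, \alpha_2, \beta_1, \beta_2$ lie in $Dis(n)$.

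First I would rule out the case in which both inequalities are strict. In that case, I propose the intermediate block $[\alpha_2|\beta_1]$. It satisfies the weak majorization part of Definition~\ref{block-defn} since $\beta_1 \succeq \alpha_1 \succeq \alpha_2$. For the length condition, Lemma~\ref{length-lemma} yields $\text{len}(\alpha_1) \le \text{len}(\alpha_2)$ and $\text{len}(\beta_2) \le \text{len}(\beta_1)$, and combining these with the length conditions on $\abone$ and $\abtwo$ produces
$$\text{len}(\beta_1) \le \text{len}(\alpha_1) \le \text{len}(\alpha_2) \le \text{len}(\beta_2) + 1 \le \text{len}(\beta_1) + 1,$$
so $[\alpha_2|\beta_1]$ is a genuine $S$-block. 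Since $\alpha_1 \neq \alpha_2$ and $\beta_1 \neq \beta_2$, it lies strictly between $\abone$ and $\abtwo$, contradicting the covering relation.

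Therefore exactly one coordinate changes. Assume $\alpha_1 = \alpha_2$ (call it $\alpha$) and $\beta_1 \prec \beta_2$; the opposite case is symmetric. If $\beta_1 \lessdot \beta_2$ fails in $Dis(n)$, pick $\gamma \in Dis(n)$ with $\beta_1 \prec \gamma \prec \beta_2$. The candidate $[\alpha|\gamma]$ inherits $\gamma \succeq \beta_1 \succeq \alpha$, and Lemma~\ref{length-lemma} together with the $S$-block length conditions on $\abone$ and $\abtwo$ gives
$$\text{len}(\gamma) \le \text{len}(\beta_1) \le \text{len}(\alpha) \le \text{len}(\beta_2) + 1 \le \text{len}(\gamma) + 1,$$
so $[\alpha|\gamma]$ is an $S$-block strictly between $\abone$ and $\abtwo$, again a contradiction.

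The main obstacle is the careful bookkeeping on lengths: to confirm each intermediate candidate satisfies Definition~\ref{block-defn}(iii), one has to chain Lemma~\ref{length-lemma} with the length conditions for \emph{both} of the given $S$-blocks. Once this sandwich of inequalities is in hand, the rest of the verification against Definitions~\ref{block-defn} and \ref{block-maj-def} is routine.
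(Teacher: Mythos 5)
Your proof is correct and follows essentially the same strategy as the paper's: contradict the covering relation by exhibiting an intermediate $S$-block, first when both coordinates change and then when a single coordinate changes by a non-cover. The only differences are cosmetic --- the paper inserts $[\alpha_1|\beta_2]$ where you insert $[\alpha_2|\beta_1]$ in the first case --- and you are in fact more thorough than the paper, which asserts the intermediate blocks are comparable without explicitly verifying that they satisfy the length condition of Definition~\ref{block-defn}; your chains of length inequalities supply exactly that missing verification.
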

   
  \begin{proof}
  If $\alpha_1 \neq \alpha_2$ and $\beta_1 \neq \beta_2$, then $\abone \succ [\alpha_1|\beta_2] \succ \abtwo$, contradicting the hypothesis.  Thus $\alpha_1 = \alpha_2$ or $\beta_1 = \beta_2$.  In the first case, if there exists $\beta \in Dis(n)$  that is distinct from $\beta_1$ and $\beta_2$ and satisfies  $\beta_1 \prec \beta \prec \beta_2$, then $\abone \succ [\alpha_1|\beta] \succ \abtwo$, again contradicting the hypothesis.  Thus  $\alpha_1 = \alpha_2$ implies $\beta_1 \lessdot \beta_2$.
   The second case is analogous.
    \end{proof}

  We next wish to   partition  the   balanced  elements of $S$-$Block(n)$ into amphoras, and to do so, we need to identify the  elements that will be maximal in these amphoras.  These arise from the set of threshold-covered $S$-blocks, studied in \cite{Me03}.
  
\begin{Def}
{\rm The set of \emph{threshold-covered}  members of $S$-$Block(n)$  is\\
$TC(n) = \{ \ab \in S$-$Block(n): \beta \gtrdot \alpha\}$. }
\end{Def}

The name \emph{threshold-covered} comes from that fact that the elements of $TC(n)$ are precisely the members of $S$-$Block(n)$ that are covered by an $S$-block of a threshold graph.  This follows as a consequence of the proof of the next result.

\begin{Prop}
 $TC(n)$ is an antichain.
%Let $\ab$ be an element of $S$-$Block(n)$ for which $\beta \gtrdot \alpha$. There
\label{TC-prop}
\end{Prop}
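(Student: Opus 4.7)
The plan is to prove $TC(n)$ is an antichain by contradiction: suppose two distinct elements $[\alpha_1|\beta_1],[\alpha_2|\beta_2]\in TC(n)$ are comparable, say $[\alpha_1|\beta_1]\succ[\alpha_2|\beta_2]$. By Definition~\ref{block-maj-def}, this means $\alpha_1\succeq\alpha_2$ and $\beta_1\preceq\beta_2$, with at least one inequality strict. I also have the two ``tightness'' hypotheses $\beta_1\gtrdot\alpha_1$ and $\beta_2\gtrdot\alpha_2$, which in particular force $\beta_1\succ\alpha_1$ and $\beta_2\succ\alpha_2$ in $Dis(n)$.

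The core idea is to chain these relations together. From the above I immediately get the string
\[
\beta_2\;\succeq\;\beta_1\;\succ\;\alpha_1\;\succeq\;\alpha_2
\]
in $Dis(n)$. My goal will be to produce an element strictly between $\beta_2$ and $\alpha_2$, contradicting the covering relation $\beta_2\gtrdot\alpha_2$. I split into two cases based on where the strict inequality lies.

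In the case $\beta_2\succ\beta_1$, the element $\beta_1$ itself sits strictly below $\beta_2$ and strictly above $\alpha_2$ (using $\beta_1\succ\alpha_1\succeq\alpha_2$), which directly contradicts $\beta_2\gtrdot\alpha_2$ provided $\beta_1\neq\alpha_2$; but $\beta_1=\alpha_2$ would force $\alpha_1=\alpha_2=\beta_1$, clashing with $\beta_1\succ\alpha_1$. In the remaining case $\beta_1=\beta_2$, the strict inequality must be $\alpha_1\succ\alpha_2$, and then $\alpha_1$ itself sits strictly between $\beta_2=\beta_1\succ\alpha_1$ and $\alpha_2$, again contradicting $\beta_2\gtrdot\alpha_2$.

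The argument is essentially a short case analysis, so there is no deep obstacle; the one place to be careful is ruling out the degenerate possibility that the intermediate element coincides with an endpoint, which is handled by noting that $\beta_i\succ\alpha_i$ is strict in both blocks. I expect this to fit in a few lines once the chained inequality is written down.
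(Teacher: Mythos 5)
Your proof is correct and is essentially the paper's argument: both come down to the chained inequality $\alpha_2 \preceq \alpha_1 \prec \beta_1 \preceq \beta_2$ colliding with the covering relation $\beta_2 \gtrdot \alpha_2$, which leaves no room for a strict step anywhere else in the chain. The only cosmetic difference is that the paper phrases it by showing the up-set of $[\alpha_2|\beta_2]$ consists exactly of itself, $[\alpha_2|\alpha_2]$, and $[\beta_2|\beta_2]$ (a slightly stronger fact it reuses to justify the name ``threshold-covered''), whereas you argue directly by contradiction.
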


\begin{proof}
Let $\ab \in TC(n)$, thus $\beta \gtrdot \alpha$.   We will show that distinct elements of $TC(n)$ are incomparable by showing that there are precisely two elements of $S$-$Block(n)$ that are greater than $\ab$, namely  $[\alpha|\alpha]$ and $[\beta|\beta]$.  
Suppose $\abpr \in S$-$Block(n)$ and $\ab \preceq \abpr$.  By the definition of majorization and   $\abpr$ being an $S$-block, we have 
$\alpha \preceq \alpha' \preceq \beta' \preceq \beta$.  If two of these inequalities are strict, we contradict $\beta \gtrdot \alpha$.  Thus at most one of the inequalities is strict and we get $\abpr$ equal to either $\ab$, $[\alpha|\alpha]$ or  $[\beta|\beta]$.  Thus the only $S$-blocks greater than $\ab$ are $[\alpha|\alpha]$ or  $[\beta|\beta]$.   The result follows immediately since elements of $TC(n)$ do not have the form $[\gamma|\gamma]$.
\end{proof}

We partition the subset of  balanced $S$-blocks in    $TC(n)$ according to lengths as follows.
  
\begin{Def}
{\rm
$(Max)^n_{k,k-1} = \{ \ab \in TC(n):   \alpha \in Dis_k(n), \beta \in Dis_{k-1}(n) \}$.
}
\label{max-bal}
\end{Def}

  The next definition   partitions  the balanced elements of $S$-$Block(n)$, and Theorem~\ref{Akk-thm}  shows  that each part is an amphora and characterizes the minimum and maximal elements.
  
  \begin{Def}
  {\rm
For $n \ge \binom{k+1}{2}$, define  $A(n,k,k-1)$ to be the  subposet of $S$-$Block(n)$ consisting of the  (balanced) $S$-blocks $\ab$ where $\alpha \in Dis_k(n)$ and $\beta \in Dis_{k-1}(n)$.
}
\label{Akk-def}
\end{Def}

 For example, $A(10,3,2)$ is outlined in dashed blue lines in Figure~\ref{fig-s-10} and has maximal elements $[6,3,1 \, | \, 6,4]$ and $ [7,2,1 \, | \, 7,3]$ and minimum element $[5,3,2 \; | \, 9,1]$.  The next proposition specifies these values more generally.  
 %More generally, the set of maximal elements of $A(n,k,k-1)$ is $(Max)^n_{k,k-1}$ and  the minimum element of $A(n,k,k-1)$ is $[\tkn|\tau'_{k-1}(n)]$, where $\tkn$ and $\tknpr$ are defined in Definitions~\ref{tau-max-def} and \ref{tau-min-def}.

\begin{Thm}
The set $A(n,k,k-1)$ is an amphora with minimum element  $[\tkn|\tau'_{k-1}(n)]$
and whose antichain of maximal elements is   $(Max)^n_{k,k-1}.$
\label{Akk-thm}
\end{Thm}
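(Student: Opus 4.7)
The plan is to mirror the proof of Theorem~\ref{Ak-thm}, splitting the argument into four tasks: (i) verify that $[\tau_k(n)|\tau'_{k-1}(n)]$ actually belongs to $A(n,k,k-1)$; (ii) check it is majorized by every element of $A(n,k,k-1)$; (iii) verify that $(Max)^n_{k,k-1}$ is an antichain contained in $A(n,k,k-1)$; and (iv) show every element of $A(n,k,k-1)$ is majorized by some member of $(Max)^n_{k,k-1}$.

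Tasks (i)--(iii) amount to straightforward bookkeeping. For (i), I would chain Lemmas~\ref{tau-pr-jk-lem}, \ref{tau-pr-max-lem}, and \ref{tau-min-lem} to conclude $\tau'_{k-1}(n) \succeq \tau'_k(n) \succeq \tau_k(n)$; combined with the length identity $\mathrm{len}(\tau_k(n)) = k = \mathrm{len}(\tau'_{k-1}(n)) + 1$, this makes $[\tau_k(n)|\tau'_{k-1}(n)]$ a legitimate balanced $S$-block sitting in $A(n,k,k-1)$. For (ii), the inequalities $\alpha \succeq \tau_k(n)$ (Lemma~\ref{tau-min-lem}) and $\beta \preceq \tau'_{k-1}(n)$ (Lemma~\ref{tau-pr-max-lem}) feed directly into Definition~\ref{block-maj-def}. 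Part (iii) follows since $(Max)^n_{k,k-1} \subseteq TC(n)$ and $TC(n)$ is already an antichain by Proposition~\ref{TC-prop}.

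The heart of the proof is (iv). My strategy is to identify the maximal elements of the finite poset $A(n,k,k-1)$ as being exactly $(Max)^n_{k,k-1}$; finiteness then guarantees each element lies below one. Given $[\alpha|\beta] \in A(n,k,k-1)$ with $\beta$ not covering $\alpha$ in $Dis(n)$, I would pick some $\gamma$ strictly between them and apply Lemma~\ref{length-lemma} to trap $\mathrm{len}(\gamma) \in \{k-1,k\}$. A case split produces a strictly larger block in $A(n,k,k-1)$: namely $[\gamma|\beta]$ when $\mathrm{len}(\gamma) = k$, and $[\alpha|\gamma]$ when $\mathrm{len}(\gamma) = k-1$. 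Hence every maximal element of $A(n,k,k-1)$ has $\beta \gtrdot \alpha$, that is, lies in $(Max)^n_{k,k-1}$. For the reverse inclusion, if $[\alpha|\beta] \in (Max)^n_{k,k-1}$ is dominated by some $[\alpha'|\beta'] \in A(n,k,k-1)$, then the chain $\alpha \preceq \alpha' \preceq \beta' \preceq \beta$ together with $\beta \gtrdot \alpha$ leaves only $\alpha' \in \{\alpha,\beta\}$ and $\beta' \in \{\alpha,\beta\}$; the length mismatch between $\mathrm{len}(\alpha') = k$ and $\mathrm{len}(\beta) = k-1$ (and symmetrically for $\beta'$ versus $\alpha$) forces $\alpha' = \alpha$ and $\beta' = \beta$.

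The main obstacle I foresee is the case analysis in (iv): one must check in both length cases that the candidate larger block is genuinely an $S$-block still satisfying the coordinate-length restrictions of Definition~\ref{Akk-def}, which reduces to verifying the three $S$-block axioms of Definition~\ref{block-defn} and the relevant majorization inequality. Once that is handled, the amphora conclusion follows immediately from Definition~\ref{amphora-def}.
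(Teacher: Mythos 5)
Your tasks (i)--(iv) are all correct and can be carried out as you describe; in particular your treatment of (iv) is sound, though it takes a different route from the paper. The paper walks a maximal chain $\beta = \gamma_1 \gtrdot \gamma_2 \gtrdot \cdots \gtrdot \gamma_m = \alpha$ in $Dis(n)$ and uses Lemma~\ref{length-lemma} to locate the consecutive pair where the length jumps from $k-1$ to $k$; that pair is the desired element of $(Max)^n_{k,k-1}$ above $[\alpha|\beta]$. Your version reaches the same conclusion by classifying the maximal elements of the finite poset $A(n,k,k-1)$ and invoking finiteness; both work, and your reverse-inclusion argument (that members of $(Max)^n_{k,k-1}$ really are maximal) essentially reproves Proposition~\ref{TC-prop} with a length refinement attached.

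The genuine gap is that (i)--(iv) establish only one containment of the set equality demanded by Definition~\ref{amphora-def}. An amphora must consist of \emph{all} elements of $S$-$Block(n)$ that majorize $[\tau_k(n)|\tau'_{k-1}(n)]$ and are majorized by some element of the antichain; you show $A(n,k,k-1)$ is contained in that set, but not the converse, and it does not ``follow immediately'' as you claim. A priori some $S$-block $[\alpha|\beta]$ with, say, both coordinates of length $k-1$ could be sandwiched between the proposed minimum and the antichain, which would make the amphora strictly larger than $A(n,k,k-1)$. The paper closes this with a short squeeze: if $[\tau_k(n)|\tau'_{k-1}(n)] \preceq [\alpha|\beta] \preceq [\alpha'|\beta']$ with $[\alpha'|\beta'] \in (Max)^n_{k,k-1}$, then $\alpha' \succeq \alpha \succeq \tau_k(n)$ with both ends in $Dis_k(n)$ forces $\mathrm{len}(\alpha) = k$ by Lemma~\ref{length-lemma}, and symmetrically $\tau'_{k-1}(n) \succeq \beta \succeq \beta'$ forces $\mathrm{len}(\beta) = k-1$, so $[\alpha|\beta] \in A(n,k,k-1)$. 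Add this step and your proof is complete.
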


\begin{proof}
First consider $\ab \in A(n,k,k-1)$, so $\beta \succeq \alpha$ and $\alpha \in Dis_k(n)$ and  $\beta \in Dis_{k-1}(n)$.  By Lemma~\ref{tau-pr-max-lem}, $\beta \preceq \tau'_{k-1}(n)$, and by Lemma~\ref{tau-min-lem},
$\alpha \succeq \tkn$, thus $\ab \succeq [\tkn|\tau'_{k-1}(n)]$ by Definition~\ref{block-maj-def}.

We next show that $\ab$ is majorized by an element of $(Max)^n_{k,k-1}.$  In $Dis(n)$ there exists a chain of covering relations:  
$\beta = \gamma_1 \gtrdot \gamma_2 \gtrdot \gamma_3 \gtrdot  \cdots \gtrdot \gamma_m = \alpha$.  Since $\beta$ has length $k-1$ and $\alpha$ has length $k$,  by Lemma~\ref{length-lemma} there exists $r:1 \le r < m$ for which $\gamma_i \in Dis_{k-1}(n)$ for $i \le r$ and $\gamma_i \in Dis_{k}(n)$ for $i > r$.  Then $[\gamma_{r+1}|\gamma_r] \in  (Max)^n_{k,k-1}$ and $\ab \preceq [\gamma_{r+1}|\gamma_r] $. 

 Conversely, let $\ab$ be an $S$-block that majorizes $[\tkn|\tau'_{k-1}(n)]$ and is majorized by an element  $[\alpha'|\beta']$ of $(Max)^n_{k,k-1}$.  Since $\alpha', \tkn \in Dis_k(n)$ and $\alpha' \succeq \alpha \succeq \tkn$ we know $\alpha \in Dis_k(n)$ by Lemma~\ref{length-lemma}, and similarly $\beta \in Dis_{k-1}(n)$.  So $\ab \in A(n,k,k-1)$.
  %Since $\tkn$, $\tknpr$, and all elements of $(Max)^n_{k}$ are in $Dis_k(n)$, Lemma~\ref{length-lemma} implies that $\alpha,\beta \in Dis_k(n)$ and thus $\ab \in A_k$.
 \end{proof}

 A consequence of the proof of Theorem~\ref{Akk-thm} is  that we can specify the set of all minimal balanced split blocks.
 
\begin{Cor}
In   $S$-$Block(n)$,  the set of minimal balanced split blocks is
$\{ [\tau_k(n)|\tau'_{k-1}(n)]:  n \ge \binom{k+1}{2} \}.$
\end{Cor}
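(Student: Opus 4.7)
The plan is to show two things: (a) each $[\tau_k(n)|\tau'_{k-1}(n)]$ is minimal among all balanced split blocks (not merely minimum within its own amphora), and (b) every minimal balanced split block has this form. The key observation is that by Theorem~\ref{unbal-block-thm}, a balanced $S$-block $\ab$ must satisfy $\mathrm{len}(\alpha) = \mathrm{len}(\beta) + 1$, so every balanced $S$-block lies in exactly one amphora $A(n,k,k-1)$ for some $k$ with $n \ge \binom{k+1}{2}$.

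For (b), suppose $\ab$ is a minimal balanced split block in $S$-$Block(n)$. By the above observation, $\ab \in A(n,k,k-1)$ for some $k$. By Theorem~\ref{Akk-thm}, $\ab \succeq [\tau_k(n)|\tau'_{k-1}(n)]$. Since $[\tau_k(n)|\tau'_{k-1}(n)]$ is itself balanced (its $\alpha$-part has length $k$ and its $\beta$-part has length $k-1$), minimality forces equality.

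For (a), the main task is to rule out the possibility that some balanced $\abpr$ from a different amphora $A(n,k',k'-1)$ strictly majorizes $[\tau_k(n)|\tau'_{k-1}(n)]$ from below, i.e., that $\abpr \prec [\tau_k(n)|\tau'_{k-1}(n)]$ with $k' \neq k$. Suppose such a balanced $\abpr$ exists with $\abpr \preceq [\tau_k(n)|\tau'_{k-1}(n)]$. By Definition~\ref{block-maj-def}, $\alpha' \succeq \tau_k(n)$ and $\beta' \preceq \tau'_{k-1}(n)$. Applying Lemma~\ref{length-lemma} to the first relation gives $\mathrm{len}(\alpha') \le k$, and to the second gives $\mathrm{len}(\beta') \ge k-1$. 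Since $\abpr$ is balanced, $\mathrm{len}(\alpha') = \mathrm{len}(\beta')+1$, so $\mathrm{len}(\alpha') \ge k$ as well, forcing $\mathrm{len}(\alpha') = k$ and $\mathrm{len}(\beta') = k-1$. Therefore $\abpr \in A(n,k,k-1)$, and by the minimum-element conclusion of Theorem~\ref{Akk-thm} we obtain $\abpr = [\tau_k(n)|\tau'_{k-1}(n)]$.

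The main obstacle is not really an obstacle but rather making sure to invoke Lemma~\ref{length-lemma} in both directions to pin down the length of $\alpha'$ and $\beta'$; once that is done, the argument collapses immediately by appealing to the amphora structure already established in Theorem~\ref{Akk-thm}. The whole corollary is essentially an assembly of Theorem~\ref{unbal-block-thm}, Lemma~\ref{length-lemma}, and Theorem~\ref{Akk-thm}, and so will be quite brief — analogous in spirit to the unbalanced case handled in the preceding corollary to Theorem~\ref{Ak-thm}.
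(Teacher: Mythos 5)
Your proof is correct and follows exactly the route the paper intends (the paper presents this corollary as a consequence of the proof of Theorem~\ref{Akk-thm} without writing out the details): use Theorem~\ref{unbal-block-thm} to place every balanced $S$-block in a unique amphora $A(n,k,k-1)$, then invoke the minimum element identified in Theorem~\ref{Akk-thm}. One small slip in your part (a): from $\abpr \preceq [\tau_k(n)|\tau'_{k-1}(n)]$, Definition~\ref{block-maj-def} actually gives $\alpha' \preceq \tau_k(n)$ and $\beta' \succeq \tau'_{k-1}(n)$ (the reverse of what you wrote), so Lemma~\ref{length-lemma} yields ${\rm len}(\alpha') \ge k$ and ${\rm len}(\beta') \le k-1$; combined with ${\rm len}(\alpha') = {\rm len}(\beta')+1$ this still forces ${\rm len}(\alpha') = k$ and ${\rm len}(\beta') = k-1$, so your conclusion is unaffected.
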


We now arrive at one of our main results.
  Combining the results in  Theorems~\ref{Ak-thm} 
  and \ref{Akk-thm} we conclude that  $S$-$Block(n)$ is partitioned into amphoras as we state below.
  
  \begin{Thm}
  The poset $S$-$Block(n)$ is the disjoint union of the amphoras $A(n,k)$ and $A(n,k,k-1)$ for all  $k\ge1$ satisfying $\binom{k+1}{2} \le n$.
  \label{partition-thm}
  \end{Thm}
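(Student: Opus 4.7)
The plan is to show that each $S$-block $[\alpha|\beta] \in S$-$Block(n)$ lies in exactly one of the sets $A(n,k)$ or $A(n,k,k-1)$, by reading off $k$ directly from the lengths of $\alpha$ and $\beta$. Theorems~\ref{Ak-thm} and \ref{Akk-thm} have already established that each $A(n,k)$ and each $A(n,k,k-1)$ is genuinely an amphora with the claimed extremes, so only the partition statement remains.

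First I would take an arbitrary $[\alpha|\beta]\in S$-$Block(n)$. By clause (iii) of Definition~\ref{block-defn}, exactly one of the following holds: either $\mathrm{len}(\alpha)=\mathrm{len}(\beta)$, or $\mathrm{len}(\alpha)=\mathrm{len}(\beta)+1$. In the first case, set $k=\mathrm{len}(\alpha)=\mathrm{len}(\beta)$; then $\alpha,\beta\in Dis_k(n)$, so by Definition~\ref{Ak-def} we have $[\alpha|\beta]\in A(n,k)$. In the second case, set $k=\mathrm{len}(\alpha)$, so $\mathrm{len}(\beta)=k-1$, giving $\alpha\in Dis_k(n)$ and $\beta\in Dis_{k-1}(n)$, hence $[\alpha|\beta]\in A(n,k,k-1)$ by Definition~\ref{Akk-def}. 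In both cases the existence of a partition of $n$ into $k$ distinct positive parts forces $n\ge 1+2+\cdots+k=\binom{k+1}{2}$, so $k$ is a valid index for the corresponding amphora.

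Next I would verify disjointness. The sets $A(n,k)$ and $A(n,k',k'-1)$ are automatically disjoint for any $k,k'$, because every element of the former has $\mathrm{len}(\alpha)=\mathrm{len}(\beta)$ while every element of the latter satisfies $\mathrm{len}(\alpha)=\mathrm{len}(\beta)+1$. For distinct indices $k\ne k'$ the sets $A(n,k)$ and $A(n,k')$ are disjoint because the common length $\mathrm{len}(\alpha)$ of an element of $A(n,k)$ equals $k$, and similarly the sets $A(n,k,k-1)$ and $A(n,k',k'-1)$ are distinguished by $\mathrm{len}(\alpha)$. Thus the assignment of an $S$-block to an amphora via $k=\mathrm{len}(\alpha)$ is well-defined and unique.

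There is no real obstacle here: the entire content is encoded in Definition~\ref{block-defn}(iii) together with Definitions~\ref{Ak-def} and \ref{Akk-def}. The one thing to be careful about is ensuring that the length condition in Definition~\ref{block-defn}(iii) is exhaustive and mutually exclusive, and that the lower bound $n\ge\binom{k+1}{2}$ required by the two amphora definitions is automatic from $\alpha\in Dis_k(n)$. Both points are immediate, so the proof is essentially a one-line case split once the preceding amphora theorems are in place.
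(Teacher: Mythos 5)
Your proof is correct and matches the paper's intent: the paper states this theorem as an immediate consequence of Theorems~\ref{Ak-thm} and \ref{Akk-thm}, and your write-up simply makes explicit the underlying observation that clause (iii) of Definition~\ref{block-defn} sorts every $S$-block into exactly one of the sets $A(n,k)$ or $A(n,k,k-1)$ according to the lengths of $\alpha$ and $\beta$. Your additional check that $\alpha\in Dis_k(n)$ forces $n\ge\binom{k+1}{2}$ is a worthwhile detail the paper leaves implicit.
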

  
  \subsection{The poset $W(n)$  of amphoras}
  
In this section, we describe the interaction between the amphoras of balanced and unbalanced $S$-blocks.  For $S$-$Block(10)$, these are highlighted   in Figure~\ref{fig-s-10} and 
the overall structure is captured in Figure~\ref{fig-W}.
We form a height one poset $W(n) $ whose elements are the amphoras  $A(n,k)$, $A(n,k,k-1)$ for $n \ge \binom{k+1}{2}$.      The set of minimal elements of $W(n)$   is $ \{A(n,k,k-1):   n\ge \binom{k+1}{2} \}$, the amphoras containing the balanced $S$-blocks, and the  set of maximal elements of $W(n)$ is $ \{A(n,k):   n\ge \binom{k+1}{2} \}$, the amphoras containing the unbalanced $S$-blocks.    The only comparabilities  in $W(n)$ are $A(n,k,k-1) \prec A(n,k)$ and $A(n,k,k-1) \prec A(n,{k-1})$, giving $W(n)$ the look of a zigzag. 
 The poset $W(10)$ is shown in Figure~\ref{fig-W}.  The comparabilities in $W(n)$ are interpreted  as statements about elements of $S$-$Block(n)$ in the following theorem.  In particular, 
we can combine the  amphora $A(n,k,k-1)$  of balanced $S$-blocks with either one or both of  the two surrounding amphoras  for unbalanced $S$-blocks to get  a larger amphora.

 \begin{Thm}
 Any amphora in $W(n)$ corresponds to an amphora in $S$-$Block(n)$.
 In particular, $A(n,k) \cup A(n,k,k-1)$,  $A(n,k,k-1) \cup  A(n,k-1)$, and $A(n,k) \cup A(n,k,k-1) \cup  A(n,k-1)$  are all amphoras of $S$-$Block(n)$.
\label{big-amphora-thm}
 \end{Thm}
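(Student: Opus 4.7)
The plan is to first observe that the amphoras of the zigzag poset $W(n)$ are easy to enumerate. Since the only cover relations in $W(n)$ are $A(n,j,j-1)\prec A(n,j)$ and $A(n,j,j-1)\prec A(n,j-1)$, every amphora in $W(n)$ is either a singleton $\{A(n,j)\}$ or $\{A(n,j,j-1)\}$, or one of $\{A(n,k,k-1), A(n,k)\}$, $\{A(n,k,k-1), A(n,k-1)\}$, or $\{A(n,k,k-1), A(n,k), A(n,k-1)\}$. The two singleton cases are handled by Theorems~\ref{Ak-thm} and \ref{Akk-thm}; the remaining three are precisely the unions named in the statement, so it suffices to treat them. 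For each such union $U$ I would exhibit a minimum element and an antichain of maximal elements, then verify the interval characterization.

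In every case I would take as minimum the element $[\tkn\,|\,\tau'_{k-1}(n)]$, which is already the minimum of $A(n,k,k-1)$ by Theorem~\ref{Akk-thm}. For $U_1=A(n,k)\cup A(n,k,k-1)$ I would take the antichain $(Max)^n_k$; for $U_2=A(n,k,k-1)\cup A(n,k-1)$ the antichain $(Max)^n_{k-1}$; and for $U_3=A(n,k)\cup A(n,k,k-1)\cup A(n,k-1)$ the union $(Max)^n_k\cup (Max)^n_{k-1}$. The only new antichain verification is for $U_3$: if $[\gamma|\gamma]\preceq[\gamma'|\gamma']$ then $\gamma\preceq\gamma'$ and $\gamma'\preceq\gamma$, so $\gamma=\gamma'$, and therefore partitions of different lengths yield incomparable blocks. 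That $[\tkn\,|\,\tau'_{k-1}(n)]$ lies below every proposed maximal element follows by combining Lemmas~\ref{tau-min-lem}, \ref{tau-jk-lem}, \ref{tau-pr-max-lem}, and \ref{tau-pr-jk-lem}: e.g., for $[\gamma|\gamma]\in (Max)^n_k$ we have $\gamma\succeq\tkn$ and $\gamma\preceq\tknpr\preceq\tau'_{k-1}(n)$.

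For the interval characterization, the $\subseteq$ direction uses the minimum and maximum descriptions already established: every $[\alpha|\beta]\in A(n,k)$ is $\preceq[\beta|\beta]\in (Max)^n_k$, every $[\alpha|\beta]\in A(n,k-1)$ is $\preceq[\beta|\beta]\in (Max)^n_{k-1}$, and every $[\alpha|\beta]\in A(n,k,k-1)$ is $\preceq$ some element of $(Max)^n_k$ by the cover-chain construction from the proof of Theorem~\ref{Akk-thm} (or, symmetrically, some element of $(Max)^n_{k-1}$). For the $\supseteq$ direction, suppose $[\tkn\,|\,\tau'_{k-1}(n)]\preceq[\alpha|\beta]\preceq[\gamma|\gamma]$ for $\gamma$ in the relevant part of $Dis(n)$. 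Lemma~\ref{length-lemma} applied to the two sandwiches $\tkn\preceq\alpha\preceq\gamma$ and $\gamma\preceq\beta\preceq\tau'_{k-1}(n)$ forces $\mathrm{len}(\alpha),\mathrm{len}(\beta)\in\{k-1,k\}$; the $S$-block length condition from Definition~\ref{block-defn} (either equal or differing by one) then pins the pair $(\mathrm{len}(\alpha),\mathrm{len}(\beta))$ to one of $(k,k)$, $(k,k-1)$, or $(k-1,k-1)$, placing $[\alpha|\beta]$ into $A(n,k)$, $A(n,k,k-1)$, or $A(n,k-1)$ respectively.

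The main technical obstacle is precisely this length bookkeeping in the $\supseteq$ step: one has to rule out blocks slipping into an adjacent amphora such as $A(n,k+1,k)$ or $A(n,k-1,k-2)$. This is controlled because the minimum element $[\tkn\,|\,\tau'_{k-1}(n)]$ forces $\mathrm{len}(\alpha)\le k$ and $\mathrm{len}(\beta)\ge k-1$ via Lemma~\ref{length-lemma}, while the chosen antichain element pins the complementary inequalities from above; the three acceptable $(\mathrm{len}(\alpha),\mathrm{len}(\beta))$ pairs are then exactly the ones that populate the union $U_i$, and no others.
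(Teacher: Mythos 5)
Your proof is correct and follows essentially the same route as the paper: the same minimum element $[\tau_k(n)\,|\,\tau'_{k-1}(n)]$, the same maximal antichains $(Max)^n_k$, $(Max)^n_{k-1}$ and their union, and the same supporting lemmas on $\tau_k(n)$ and $\tau'_k(n)$. The one place you go beyond the paper is the explicit $\supseteq$ verification via the length sandwich from Lemma~\ref{length-lemma} (together with the enumeration of the amphoras of $W(n)$), which the paper leaves largely implicit; that bookkeeping is exactly right and closes the argument cleanly.
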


  \begin{proof}
 
 First we show that $[\tau_k(n)|\tau'_{k-1}(n)]$ is below every element in $A(n,k) \cup A(n,k,k-1) \cup  A(n,k-1)$.
 By  Lemma~\ref{tau-pr-jk-lem} we have $ \tau'_{k}(n) \preceq \tau'_{k-1}(n)$, and by Lemma~\ref{tau-jk-lem} we have $\tau_{k-1}(n) \succeq \tau_{k}(n)$.  Thus $[\tau_k(n)|\tau'_{k}(n)] \succeq [\tau_k(n)|\tau'_{k-1}(n)]$ and 
  $[\tau_{k-1}(n)|\tau'_{k-1}(n)] \succeq [\tau_k(n)|\tau'_{k-1}(n)]$.  However,  
  $[\tau_{k}(n)|\tau'_{k}(n)]$ is the minimum element of $A(n,k)$,    $[\tau_k(n)|\tau'_{k-1}(n)]$ is the minimum element of $A(n,k,k-1)$, and 
 $[\tau_{k-1}(n)|\tau'_{k-1}(n)] $ is the minimum element of $A(n,k-1)$ by Theorems~\ref{Ak-thm} and \ref{Akk-thm}.
Thus we have shown that $[\tau_k(n)|\tau'_{k-1}(n)] \preceq y$ for every $y \in A(n,k) \cup A(n,k,k-1) \cup  A(n,k-1)$.

 Next we consider maximal elements.
 For any $\ab \in A(n,k,k-1)$ we have $\ab \prec [\alpha|\alpha]$  where  $[\alpha|\alpha] \in A(n,k)$, and $\ab \prec [\beta|\beta]$ where $[\beta|\beta] \in A(n, k-1)$.  
 Thus $A(n,k) \cup A(n,k,k-1)$  (respectively,  $A(n,k,k-1) \cup  A(n,k-1)$)  is   an amphora   consisting of those $S$-blocks  that majorize $ [\tau_k(n)|\tau'_{k-1}(n)]$ and are majorized by  an element of $(Max)^n_k$ (respectively   $(Max)^n_{k-1}$).   Similarly, the set 
  $A(n,k) \cup A(n,k,k-1) \cup  A(n,k-1)$  is   an amphora   consisting of those $S$-blocks  that majorize $ [\tau_k(n)|\tau'_{k-1}(n)]$ and are majorized by an element of 
   $(Max)^n_{k} \cup (Max)^n_{k-1}$.
 \end{proof}
 
  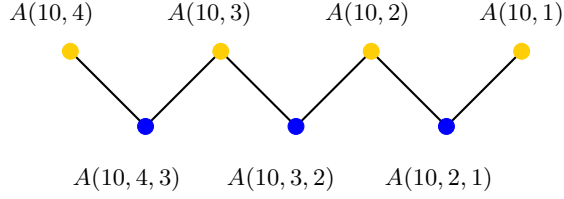
\begin{figure}[h]
  \begin{center}
\begin{tikzpicture}
\draw[thick] (0,1)--(1,0)--(2,1)--(3,0)--(4,1)--(5,0)--(6,1);
\filldraw[blue]

%(0,1) circle [radius=3pt]
(1,0) circle [radius=3pt]
%(2,1) circle [radius=3pt]
(3,0) circle [radius=3pt]
%(4,1) circle [radius=3pt]
(5,0) circle [radius=3pt]
%(6,1) circle [radius=3pt]
;

\filldraw[yellow!65!orange]

(0,1) circle [radius=3pt]
%(1,0) circle [radius=3pt]
(2,1) circle [radius=3pt]
%(3,0) circle [radius=3pt]
(4,1) circle [radius=3pt]
%(5,0) circle [radius=3pt]
(6,1) circle [radius=3pt]
;

\node(0) at (-.25,1.5) {\small $ A(10,4)$};
\node(0) at (1.85,1.5) {\small $ A(10,3)$};
\node(0) at (3.95,1.5) {\small $ A(10,2)$};
\node(0) at (6,1.5) {\small $ A(10,1)$};

\node(0) at (.75,-.7) {\small $ A(10,4,3)$};
\node(0) at (2.8,-.7) {\small $ A(10,3,2)$};
\node(0) at (4.9,-.7) {\small $ A(10,2,1)$};
  \end{tikzpicture}
  \end{center} 
 % \end{figure} 

 % \begin{figure}
  %{\includegraphics[height=1in]{fig-3}}
  
  \caption{The poset $W(10)$.}
  \label{fig-W}
  \end{figure}

    In Figure~\ref{fig-s-10} it is easy to see that every $S$-block in $A(10,3)$ is incomparable to every $S$-block in $A(10,2)$, and this is also seen in the incomparabilities in $W(10)$ shown in Figure~\ref{fig-W}.   A more general statement is given in Theorem~\ref{incomp-thm} below where we use the symbol $\parallel$ to denote incomparability.  The proof follows from  Lemma~\ref{length-lemma}
    and Definition~\ref{block-maj-def}.

\begin{Thm}  Suppose $\abone$ and $\abtwo$ are  $S$-blocks and $A_1, A_2$ are amphoras in $ W(n)$ with $\abone \in A_1$ and $\abtwo \in A_2$.
\medskip

{\rm (i)} 
If $A_1 \parallel A_2$ in $W(n)$ then $\abone \parallel \abtwo$ in $S$-$Block(n)$. 

{\rm (ii)}  If $A_1 \preceq A_2$ in $W(n)$ then  either $\abone \prec \abtwo$ or $\abone \parallel \abtwo$ in $S$-$Block(n)$.

{\rm (iii)} If $\abone \prec \abtwo$  then  $A_1 \preceq A_2$.
%Every incomparability in $W(n)$ corresponds to a full set of  incomparabilities in $S$-$Block(n)$.  That is,
%if $\abone$ and $\abtwo$ are  $S$-blocks that are members of incomparable elements of $W(n)$, then they are incomparable in $S$-$Block(n)$.

%  if $k\neq j$ and $\abone\in A(n,k)$ and $\abtwo\in A(n,j)$, then $\abone, \abtwo$ are incomparable elements of $S$-$Block(n)$.   Similarly, if  $k\neq j$ and $\abone\in A(n,k,k-1)$ and $\abtwo\in A(n,j,j-1)$, then $\abone, \abtwo$ are incomparable elements of $S$-$Block(n)$.
\label{incomp-thm}
\end{Thm}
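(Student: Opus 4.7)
The plan is to prove part (iii) first, since parts (i) and (ii) then follow as contrapositives. The central observation is that each amphora in $W(n)$ is identified by a fixed pair of lengths: $A(n,k)$ contains the blocks with $\text{len}(\alpha)=\text{len}(\beta)=k$, while $A(n,k,k-1)$ contains those with $\text{len}(\alpha)=k$ and $\text{len}(\beta)=k-1$. Thus the task in (iii) is to translate the block-majorization hypothesis into length inequalities and read off the $W(n)$-comparability.

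For (iii), suppose $\abone \prec \abtwo$. By Definition~\ref{block-maj-def}, $\alpha_1 \preceq \alpha_2$ and $\beta_2 \preceq \beta_1$, so Lemma~\ref{length-lemma} yields
\[
\text{len}(\alpha_2) \le \text{len}(\alpha_1), \qquad \text{len}(\beta_1) \le \text{len}(\beta_2).
\]
I would then split into four cases depending on whether each $A_i$ is balanced or unbalanced, writing $A_i=A(n,k_i)$ or $A_i=A(n,k_i,k_i-1)$. In each case the length triples are determined by $k_i$, so the two displayed inequalities become numerical constraints on $k_1$ and $k_2$. The case ``$A_1$ unbalanced and $A_2$ balanced'' is immediately inconsistent (it would force $k_2 \le k_1 \le k_2 - 1$), consistent with Theorem~\ref{below-balanced-prop}. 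The cases ``both unbalanced'' and ``both balanced'' both collapse to $k_1=k_2$, so $A_1=A_2$. The remaining case ``$A_1$ balanced, $A_2$ unbalanced'' gives $k_2 \le k_1 \le k_2+1$, and either subcase corresponds to a covering relation in $W(n)$: $A(n,k_2,k_2-1) \prec A(n,k_2)$ when $k_1=k_2$, or $A(n,k_2+1,k_2) \prec A(n,k_2)$ when $k_1=k_2+1$. In every viable case, $A_1 \preceq A_2$ in $W(n)$.

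Parts (i) and (ii) then follow from the contrapositive. For (i), suppose $A_1 \parallel A_2$ in $W(n)$. Since the amphoras of Theorem~\ref{partition-thm} are disjoint, $\abone \neq \abtwo$; if they were comparable, applying (iii) in the appropriate direction would force $A_1 \preceq A_2$ or $A_2 \preceq A_1$, contradicting incomparability. For (ii), assume $A_1 \preceq A_2$ in $W(n)$. The contrapositive of (iii) rules out $\abone \succ \abtwo$ unless $A_1 = A_2$, leaving $\abone \prec \abtwo$, $\abone = \abtwo$, or $\abone \parallel \abtwo$ as the only possibilities, matching the stated conclusion.

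The only real work is the case analysis in (iii); beyond that, the proof is a bookkeeping exercise in length comparisons, which is why the excerpt notes that the theorem follows directly from Lemma~\ref{length-lemma} and Definition~\ref{block-maj-def}.
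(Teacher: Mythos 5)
Your proof is correct and is essentially the argument the paper intends: the paper gives no written proof beyond the remark that the theorem follows from Lemma~\ref{length-lemma} and Definition~\ref{block-maj-def}, and your case analysis on the length signatures $(\mathrm{len}(\alpha),\mathrm{len}(\beta))$ attached to each amphora is exactly how that remark is meant to be unpacked. The one caveat is in part (ii): when $A_1 = A_2$ the possibilities $\abone = \abtwo$ and $\abone \succ \abtwo$ are genuinely not excluded, so your closing claim that the surviving cases ``match the stated conclusion'' is slightly too generous --- but this looseness is inherited from the paper's own statement (which is only accurate for $A_1 \prec A_2$, as the subsequent corollary confirms), not introduced by your argument.
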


 \begin{Cor}
There is a comparability $A_1 \prec A_2$ in poset $W(n)$ precisely when there is an $S$-block $\abone \in A_1$ and an $S$-block $\abtwo \in A_2$ for which $\abone \prec \abtwo$.
\label{cor-good}
 \end{Cor}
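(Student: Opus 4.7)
The plan is to prove both implications of the biconditional separately. The backward implication is essentially immediate from Theorem~\ref{incomp-thm}(iii), and the forward implication requires exhibiting explicit witnesses for each of the two types of cover relations that appear in $W(n)$.

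For the backward direction, I would invoke Theorem~\ref{incomp-thm}(iii) directly: given $\abone \in A_1$ and $\abtwo \in A_2$ with $\abone \prec \abtwo$, that theorem immediately yields $A_1 \preceq A_2$ in $W(n)$. Since the statement $A_1 \prec A_2$ in the corollary refers to a strict comparability, it suffices to observe that $A_1 \preceq A_2$ combined with $A_1 \neq A_2$ gives $A_1 \prec A_2$. (If $A_1 = A_2$, then neither side of the biconditional holds.)

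For the forward direction, I would use the explicit description of $W(n)$ given in the paragraph preceding Theorem~\ref{big-amphora-thm}: the only strict comparabilities are $A(n,k,k-1) \prec A(n,k)$ and $A(n,k,k-1) \prec A(n,k-1)$. Thus $A_1 \prec A_2$ reduces to one of these two cases, and in each case I need to produce a specific pair of $S$-blocks. My witness on the $A_1$ side will always be $\abone = [\tkn|\tau'_{k-1}(n)]$, which by Theorem~\ref{Akk-thm} is the minimum of $A(n,k,k-1)$, and which by the first paragraph of the proof of Theorem~\ref{big-amphora-thm} is majorized by every element of $A(n,k) \cup A(n,k,k-1) \cup A(n,k-1)$. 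In the first case I take $\abtwo$ to be the minimum of $A(n,k)$, namely $[\tkn|\tknpr]$, and in the second case the minimum $[\tau_{k-1}(n)|\tau'_{k-1}(n)]$ of $A(n,k-1)$; any element of the respective amphora would work equally well.

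The main small obstacle is keeping track of the strict versus weak distinction. On the $S$-block side, the weak comparability $\abone \preceq \abtwo$ is immediate from the minimum property of $\abone$, and it upgrades to the strict $\abone \prec \abtwo$ because $\abone$ and $\abtwo$ lie in different amphoras, which are disjoint by Theorem~\ref{partition-thm}. Once this disjointness is invoked, the argument is essentially a matter of reading off explicit witnesses from the prior theorems, so no further calculation is required.
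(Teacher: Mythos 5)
Your proof is correct and matches the route the paper intends: the corollary is stated without its own proof, as an immediate consequence of Theorem~\ref{incomp-thm}(iii) for one direction and of the minimum elements $[\tkn|\tau'_{k-1}(n)]$ identified in Theorems~\ref{Akk-thm} and \ref{big-amphora-thm} for the other, which is exactly what you do. One small correction: your parenthetical claim that when $A_1 = A_2$ ``neither side of the biconditional holds'' is false --- a single amphora certainly contains strictly comparable $S$-blocks (e.g., its minimum element below any of its maximal elements) --- so the corollary should be read as applying to distinct amphoras; this does not affect the substance of your argument.
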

 
 \subsection{Amphoras for NG-1 and NG-2 $S$-blocks}
 
 For a split graph $G$ with degree sequence $\pi$, Theorem~\ref{unbal-block-thm} allows us to determine  whether $G$ is balanced or unbalanced directly from $\alpha(\pi)$ and $\beta(\pi)$.
 The next theorem does the same for NG-1 and NG-2 graphs.   Recall from Proposition~\ref{unbal-NG-prop} that NG-1 and NG-2 graphs are unbalanced split graphs; thus we need not consider balanced split graphs.

 \begin{Thm}
Let $[\alpha|\beta]$ be an $S$-block corresponding to the degree sequence of an unbalanced split graph $G$.  Then $G$ is  an NG-1  graph if  and only if the last part of partition $\beta$ is 1 and $G$ is an  NG-2 graph if and only if the last part of partition $\alpha$ is at least 2.
\label{NG-block-thm}
\end{Thm}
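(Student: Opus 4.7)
The plan is to translate the conditions on the last parts of $\alpha$ and $\beta$ into equalities/inequalities about $|A|$ and $|B|$ in the ABC-partition of $G$, verify each forward direction using the structural characterization in Theorem~\ref{NG-charac}, and handle the converses by elimination via Proposition~\ref{unbal-NG-prop}. The setup is as follows: because $G$ is unbalanced, the proof of Theorem~\ref{unbal-block-thm} yields $d_m = m-1$, and Theorem~\ref{length-alpha} then forces ${\rm len}(\alpha) = {\rm len}(\beta) = m-1$. Reading the Ferrers diagram directly gives $\alpha_{m-1} = d_{m-1} - (m-2)$ and $\beta_{m-1} = d'_{m-1} - (m-1)$, where $d'_{m-1}$ is the number of indices $i$ with $d_i \ge m-1$. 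Using Remark~\ref{m-ABC-rem}, the ABC-partition satisfies $A = \{v : \deg(v) = m-1\}$, $B = \{v : \deg(v) > m-1\}$, and $C = \{v : \deg(v) < m-1\}$, so $d'_{m-1} = |A|+|B|$. Hence the key equivalences are $\beta_{m-1} = 1 \iff |A|+|B| = m$ and $\alpha_{m-1} \ge 2 \iff |B| \ge m-1$.

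For the forward directions, I would use $\chi(G) = \omega(G) = m$, which follows from the perfectness of split graphs together with Remark~\ref{m-ABC-rem}. If $G$ is NG-1, then conditions (i), (ii), (iv) of Theorem~\ref{NG-charac} make $A \cup B$ a clique of size $|A|+|B|$; meanwhile, any clique containing a vertex of $C$ omits all of $A$ by condition (v), so has size at most $|B|+1 \le |A|+|B|$. Thus $\omega(G) = |A|+|B| = m$, and $\beta_{m-1} = 1$. If $G$ is NG-2, then conditions (ii) and (iv) make $B \cup \{a\}$ a clique for any $a \in A$; the same clique-structure argument (now with $G[A]$ a stable set) shows this is a maximum clique, giving $|B|+1 = m$ and therefore $\alpha_{m-1} \ge 2$.

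For the converses, Proposition~\ref{unbal-NG-prop} asserts that $G$ must be NG-1 or NG-2. Suppose $\beta_{m-1} = 1$; if $G$ were only NG-2 (not NG-1), then $G[A]$ would be a stable set with $|A| \ge 2$ (otherwise $|A|=1$ makes $G[A]$ simultaneously a clique, so $G$ would also be NG-1), and the forward NG-2 argument gives $|A|+|B| \ge 2+(m-1) > m$, contradicting $\beta_{m-1}=1$. The NG-2 converse is analogous: if $\alpha_{m-1} \ge 2$ but $G$ is only NG-1, then $|A| \ge 2$ and $|A|+|B|=m$ force $|B| \le m-2$, contradicting $|B| \ge m-1$.

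The main subtlety I anticipate is the overlap case $|A| = 1$, in which $G$ is simultaneously NG-1 and NG-2; here both $\beta_{m-1}=1$ and $\alpha_{m-1} \ge 2$ hold consistently, and the converses must be phrased to permit, rather than forbid, this overlap. The only other delicate point is the clean justification that $\omega(G) = m$ for NG-1 and NG-2 graphs, which must be nailed down before the counting arguments on $|A|+|B|$ and $|B|+1$ are valid.
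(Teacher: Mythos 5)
Your proof is correct, but it takes a genuinely different route from the paper's. Both arguments begin the same way: unbalancedness forces $d_m = m-1$ and ${\rm len}(\alpha)={\rm len}(\beta)=m-1$, and the last parts of $\alpha$ and $\beta$ are read off the Ferrers diagram (your formulas $\alpha_{m-1}=d_{m-1}-(m-2)$ and $\beta_{m-1}=d'_{m-1}-(m-1)$ are exactly the paper's observations about boxes in row $m-1$, column $m$ and in row $m+1$, column $m-1$). At that point the paper stops and cites Theorem~16 of \cite{ChCoTr16}, which characterizes NG-1 (resp.\ NG-2) graphs precisely by the condition that $m$ is the largest (resp.\ smallest) index $i$ with $d_i=m-1$; the whole equivalence is outsourced. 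You instead re-derive that characterization inside the proof: translating the Ferrers conditions into $|A|+|B|=m$ and $|B|\ge m-1$, proving the forward implications by computing $\omega(G)$ from the structure in Theorem~\ref{NG-charac} together with $\omega(G)=\chi(G)=m$ (perfectness of split graphs plus Remark~\ref{m-ABC-rem}), and obtaining the converses by elimination through Proposition~\ref{unbal-NG-prop}, with the $|A|=1$ overlap handled correctly. What your approach buys is self-containment --- a reader needs only the ABC structure theorem already quoted in this paper, not the external degree-sequence characterization --- at the cost of a longer argument and one architectural wrinkle: the paper later advertises Corollary~\ref{NG-cor} as a \emph{new} proof that every unbalanced split graph is NG-1 or NG-2, deduced from Theorem~\ref{NG-block-thm}; since your proof of the theorem invokes Proposition~\ref{unbal-NG-prop}, which is that very statement, substituting your argument would make that later ``new proof'' circular. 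This does not affect correctness (the proposition is independently established in \cite{ChCoTr16}), but it is worth being aware of if the goal is to keep the paper's dependency graph acyclic.
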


\begin{proof}  Let the degree sequence of $G$ be $\pi: d_1 \ge d_2 \ge \cdots \ge  d_k$, and let $\alpha = \alpha(\pi)$ and $\beta = \beta(\pi)$.   Furthermore,
let $m = \max \{i: d_i \ge i-1\}$,  thus $d_m \ge m-1$ and $d_{m+1} < m$. By Theorem~\ref{length-alpha}, 
  len$(\beta) = m-1$, so there is a box in $F(\pi)$ in row $m$, column $m-1$, and hence also above it in row $m-1$, column $m-1$.  
  Since $G$ is unbalanced, Theorem~\ref{unbal-block-thm} implies that     len$(\alpha) = m-1$, thus there is no box in row $m$, column $m$ in $F(\pi)$. 
  Now the definition of $\alpha$ implies that $d_m = m-1$.  
    The last part of  $\beta$ equals 1 precisely when there is no box in $F(\pi)$ in row $m+1$, column $m-1$.  This occurs  if and only if $m$ is the largest index $i$  for which $d_i = m-1$, and by Theorem 16 of \cite{ChCoTr16} this happens exactly when $G$ is  an NG-1 graph.

Similarly,  the last part of $\alpha(\pi)$  is at least 2  precisely when there is a box in $F(\pi)$ in row $m-1$, column $m$.  This occurs if and only if $d_{m-1} \ge m$ or equivalently when $m$ is the smallest index $i$ for which $d_i = m-1$, and this happens exactly when  $G$ is  an NG-2 graph, again using Theorem 16 of \cite{ChCoTr16}.
\end{proof}

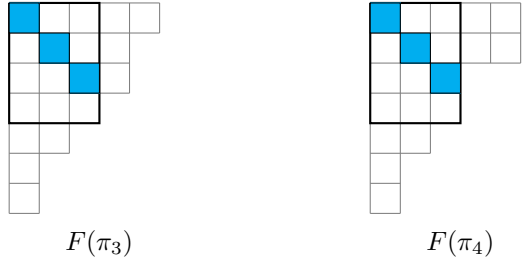
\begin{figure}[h]\begin{center} 
\begin{tikzpicture}[scale=.8]

%\node(0) at (1.5,-.5) {$G_1$};

\draw[gray] (0,0)--(0,3.5);
\draw[gray]  (.5,0)--(.5,3.5);
\draw[gray]  (1,1)--(1,3.5);
\draw[gray]  (1.5,1.5)--(1.5,3.5);
\draw[gray]  (2,2)--(2,3.5);
\draw[gray]  (2.5,3)--(2.5,3.5);
%\draw[gray]  (3,3.5)--(3,4);

%\draw[gray]  (0,4)--(3,4);
\draw[gray]  (0,3.5)--(2.5,3.5);
\draw[gray]  (0,3)--(2.5,3);
\draw[gray]  (0,2.5)--(2,2.5);
\draw[gray]  (0,2)--(2,2);
\draw[gray]  (0,1.5)--(1.5,1.5);
\draw[gray]  (0,1)--(1,1);
\draw[gray]  (0,0)--(.5,0);
\draw[gray]  (0,.5)--(.5,.5);

\draw[thick, black] (0,3.5)--(1.5,3.5)--(1.5,1.5)--(0,1.5)--(0,3.5);
\draw [fill=cyan] (0,3) rectangle (.5,3.5);
\draw [fill=cyan] (.5,2.5) rectangle (1,3);
\draw [fill=cyan] (1,2) rectangle (1.5,2.5);

\node(0) at (1.5,-.5) {$F(\pi_3)$};
%\node(0) at (1.5,-.5) {$\pi_3=(5,4,4,3,2,1,1)$};
%\node(0) at (.8, -1.5) {$[\alpha(\pi_3)\;|\;\beta(\pi_3)]=[5,3,2\;|\;6,3,1]$};

\draw[gray] (6,0)--(6,3.5);
\draw[gray]  (6.5,0)--(6.5,3.5);
\draw[gray]  (7,1)--(7,3.5);
\draw[gray]  (7.5,1.5)--(7.5,3.5);
\draw[gray]  (8,2.5)--(8,3.5);
\draw[gray]  (8.5,2.5)--(8.5,3.5);
%\draw[gray]  (3,3.5)--(3,4);

%\draw[gray]  (0,4)--(3,4);
\draw[gray]  (6,3.5)--(8.5,3.5);
\draw[gray]  (6,3)--(8.5,3);
\draw[gray]  (6,2.5)--(8.5,2.5);
\draw[gray]  (6,2)--(7.5,2);
\draw[gray]  (6,1.5)--(7.5,1.5);
\draw[gray]  (6,1)--(7,1);
\draw[gray]  (6,0)--(6.5,0);
\draw[gray]  (6,.5)--(6.5,.5);

\draw[thick, black] (6,3.5)--(7.5,3.5)--(7.5,1.5)--(6,1.5)--(6,3.5);
\draw [fill=cyan] (6,3) rectangle (6.5,3.5);
\draw [fill=cyan] (6.5,2.5) rectangle (7,3);
\draw [fill=cyan] (7,2) rectangle (7.5,2.5);

\node(0) at (7.5,-.5) {$F(\pi_4)$};
%\node(0) at (7.5,-.5) {$\pi_4=(5,5,3,3,2,1,1)$};
%\node(0) at (8.3, -1.5) {$[\alpha(\pi_4)\;|\;\beta(\pi_4)]=[5,4,1\;|\;6,3,1]$};

\end{tikzpicture}\end{center}
%\end{figure}

% \begin{figure}
% {\includegraphics[height=2in]{fig-5}}
 \caption{The Ferrers diagrams   for $\pi_3 = (5,4,4,3,2,1,1)$ and $\pi_4 = (5,5,3,3,2,1,1)$. }

  \label{fig-5}
\end{figure}

The following examples   illustrate Theorem~\ref{NG-block-thm} and its proof.   In Figure~\ref{fig-central-rect},   $\pi_1 = (6,5,2,2,2,1,1,1)$,
 $[\alpha(\pi_1)|\beta(\pi_1)] = [6,4 \, | \,  7,3]$, and a graph with degree sequence $\pi_1$ is NG-2 but not NG-1.
In Figure~\ref{fig-5},   $\pi_3 = (5,4,4,3,2,1,1)$,
 $[\alpha(\pi_3)|\beta(\pi_3)] = [5,3,2 \, | \, 6,3,1]$ and a graph with degree sequence $\pi_3$ is  both NG-1 and NG-2.
Again in  Figure~\ref{fig-5},  $\pi_4 = (5,5,3,3,2,1,1)$,
  $[\alpha(\pi_4)|\beta(\pi_4)] = [5,4,1\, | \, 6,3,1]$ and a graph with degree sequence $\pi_4$ is   NG-1 but not  NG-2.

%{\color{red}  Perhaps omit this? \ \ An alternate proof of Theorem~\ref{NG-block-thm} can be found by combining the results in section~\ref{NG-sec}  and recognizing that  any set of  $m$ vertices of highest degree form a maximum clique in $G$.}

The next lemma follows from the definition of majorization and is useful in proving results about NG-1 and NG-2 $S$-blocks.

  \begin{lemma}
   Suppose 
 $\gamma_1, \gamma_2 \in Dis_k(n)$  with  $\gamma_1 = (e_1, e_2, \ldots, e_k)$, $\gamma_2 = (f_1, f_2, \ldots, f_k)$, and $\gamma_2 \succeq \gamma_1$.  If     $e_k=1$ then $f_k=1$, and if $f_k \ge 2$ then $e_k \ge 2$.
 % Suppose 
 %$\rho, \zeta \in Dis_k(n)$  with  $\rho = (p_1, p_2, \ldots, p_k)$, $\zeta = (z_1, z_2, \ldots, z_k)$, and $\zeta \succeq \rho$.  If     $p_k=1$ then $z_k=1$ and if $z_k \ge 2$ then $p_k \ge 2$.
\label{NG-one-lem}
\end{lemma}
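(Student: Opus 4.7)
My plan is to observe that the two statements in the lemma are contrapositives of one another, so it suffices to prove just one. The key tool is the fact that both $\gamma_1$ and $\gamma_2$ lie in $Dis_k(n)$, hence they share both a common length $k$ and a common total sum $n$.

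I would first derive the inequality $f_k \le e_k$ directly from the definition of majorization. Since $\gamma_2 \succeq \gamma_1$, applying the majorization inequality at index $k-1$ gives
\[
\sum_{i=1}^{k-1} f_i \;\ge\; \sum_{i=1}^{k-1} e_i.
\]
Because $\gamma_1$ and $\gamma_2$ both partition $n$, we have $\sum_{i=1}^{k} f_i = n = \sum_{i=1}^{k} e_i$. Subtracting the displayed inequality from this equality of total sums yields $f_k \le e_k$.

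With this in hand, both claims in the lemma fall out immediately. If $e_k = 1$, then $f_k \le 1$; but $f_k$ is a part of a partition into positive parts, so $f_k \ge 1$, forcing $f_k = 1$. The second assertion, that $f_k \ge 2$ implies $e_k \ge 2$, is the contrapositive of the first (or equally a one-line consequence of $e_k \ge f_k$).

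The main obstacle is essentially nonexistent here: this is a direct unpacking of the majorization definition combined with the equality of total sums guaranteed by $\gamma_1, \gamma_2 \in Dis_k(n)$. The only subtlety worth flagging is that the conclusion depends critically on both partitions having the \emph{same} length $k$; without that, one could not apply majorization at index $k-1$ and then use the total-sum equality to transfer the inequality onto the last parts.
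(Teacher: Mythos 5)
Your proof is correct: the inequality $f_k \le e_k$ obtained by combining the majorization inequality at index $k-1$ with the equality of total sums is exactly the direct unpacking that the paper intends when it states the lemma "follows from the definition of majorization" (the paper omits the proof entirely). Nothing is missing, and your observation that the second claim is the contrapositive of the first (using positivity of parts) is a valid shortcut.
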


Corollary~\ref{NG-cor}
appears in \cite{ChCoTr16} with a proof that relies directly on the definition of NG-graphs.  Below we present a new and short proof that takes advantage of  Theorem~\ref{NG-block-thm}.
 
\begin{Cor}
Every unbalanced split graph is NG-1 or NG-2 or both.
\label{NG-cor}
 \end{Cor}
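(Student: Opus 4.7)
The plan is to leverage Theorem~\ref{NG-block-thm} to convert the class-membership question into a statement about the last parts of the partitions $\alpha(\pi)$ and $\beta(\pi)$, and then use Lemma~\ref{NG-one-lem} to show that these two possibilities exhaust the options for an unbalanced split graph.

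First, I would set up notation: let $G$ be an unbalanced split graph with degree sequence $\pi$, and write $\alpha = \alpha(\pi)$ and $\beta = \beta(\pi)$. By Theorem~\ref{split-S-block-thm}, $[\alpha|\beta]$ is an $S$-block, so in particular $\beta \succeq \alpha$. By Theorem~\ref{unbal-block-thm}, since $G$ is unbalanced, $\alpha$ and $\beta$ have the same length; combining with Theorem~\ref{length-alpha}, both $\alpha$ and $\beta$ lie in $Dis_k(n)$ for the same $k = m(\pi)-1$, where $n$ is half the sum of the degrees.

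Next, I would split into two cases based on the last part of $\beta$. If the last part of $\beta$ equals $1$, then Theorem~\ref{NG-block-thm} immediately gives that $G$ is an NG-1 graph. Otherwise, the last part of $\beta$ is at least $2$, and since $\beta \succeq \alpha$ with both in $Dis_k(n)$, Lemma~\ref{NG-one-lem} forces the last part of $\alpha$ to also be at least $2$; another application of Theorem~\ref{NG-block-thm} then yields that $G$ is an NG-2 graph. In either case $G$ is NG-1 or NG-2 (and both possibilities can co-occur when the last part of $\beta$ is $1$ and the last part of $\alpha$ is $\geq 2$, matching the ``or both'' in the statement).

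There is no real obstacle here once the correct tools are in hand: the entire argument is a two-case dichotomy on the final part of $\beta$, with Lemma~\ref{NG-one-lem} doing the work of ruling out the mixed case where the last part of $\alpha$ is $1$ but the last part of $\beta$ is $\geq 2$. The only subtle point to verify is that the hypothesis of Lemma~\ref{NG-one-lem} is satisfied, namely that $\alpha$ and $\beta$ genuinely have the same length $k$ — which is precisely the content of the unbalanced assumption via Theorem~\ref{unbal-block-thm}. The new proof is therefore substantially shorter than the direct one in \cite{ChCoTr16} because it outsources all graph-theoretic content to the block characterizations already established.
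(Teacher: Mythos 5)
Your proposal is correct and follows essentially the same route as the paper's proof: both reduce the claim to Theorem~\ref{NG-block-thm} together with Lemma~\ref{NG-one-lem}, the only cosmetic difference being that you case on the last part of $\beta$ while the paper argues contrapositively from the assumption that the block is not NG-2 (the two halves of Lemma~\ref{NG-one-lem} being contrapositives of one another). Your added verification that $\alpha$ and $\beta$ share the same length, so that the lemma applies, is a sound observation that the paper leaves implicit.
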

 
 \begin{proof}
 Let $\ab$ be an unbalanced $S$-block, thus $\beta \succeq \alpha$.  If $\ab$ is not NG-2 then by Theorem~\ref{NG-block-thm}, the last part of $\alpha$ is 1.  Now applying Lemma~\ref{NG-one-lem}, we know that the last part of $\beta$ is 1, so $\ab$ is NG-1 by Theorem~\ref{NG-block-thm}.
 \end{proof}

From Corollary~\ref{NG-cor} we know that every unbalanced $S$-block is NG-1 or  NG-2 or both.   This means that the amphora $A(n,k)$  can be divided into sets of  $S$-blocks that are NG-1,   NG-2, or both.  We next show that these sets themselves are amphoras consisting of  the elements majorizing   $\tkntknpr$
and majorized by      elements  coming from $(Max)^n_{k}.$

\begin{Def}
{\rm 
For $n \ge \binom{k+1}{2}$, define  $NG_1(n,k)$   to be the subposet of 
  $  A(n,k)$ consisting of NG-1   $S$-blocks and similarly define $NG_2(n,k)$.  Further, define $NG^\ast_1(n,k)$ to be the  subposet of $NG_1(n,k)$ consisting of those   $S$-blocks that are not  in $NG_2(n,k)$, and similarly define $NG^*_2(n,k)$.
}
\label{Ak-ng1-def}
\end{Def}

We note that some of these sets may be empty.  For example, the only $S$-block in  $A(10,4)$ is $[4321|4321]$, so $NG_2(10,4) = \emptyset$.    Similarly, $NG_1(10,1) = \emptyset$.    In Proposition~\ref{empty-NG-prop}
we determine exactly which of these sets are nonempty.

\begin{Prop}

Let $n,k$ be integers with $n \ge 2$, $k \ge 1$, and $n \ge \binom{k+1}{2}$.  
\medskip

If $k=1$ then $A(n,k)$ consists of a single $S$-block which is in $NG$-2.

\smallskip
If $k \ge 2$ and $n \le \binom{k+2}{2} - 2$, then $A(n,k)$ consists entirely of $NG$-1 $S$-blocks.

\smallskip
If $k \ge 2$ and $n \ge \binom{k+2}{2} - 1$, then $A(n,k)$ contains $S$-blocks  in each of the following sets: $NG^\ast_1(n,k) $, $NG^\ast_2(n,k) $, and 
$NG_1(n,k) \cap NG_2(n,k)$.
\end{Prop}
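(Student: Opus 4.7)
The plan is to use Theorem~\ref{NG-block-thm} as the main engine: an unbalanced $S$-block $\ab \in A(n,k)$ lies in NG-1 precisely when the last part of $\beta$ equals $1$, and in NG-2 precisely when the last part of $\alpha$ is at least $2$. Each case of the proposition then reduces to last-part information about partitions in $Dis_k(n)$, which is controlled by Remark~\ref{end-in-one-rem} together with the explicit forms of $\tkn$ and $\tknpr$ in Definitions~\ref{tau-max-def} and \ref{tau-min-def}.

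The first two cases are quick. For $k=1$, the set $Dis_1(n)$ is just $\{(n)\}$, so $A(n,1)$ consists of the single block $[(n)\,|\,(n)]$; since $n \ge 2$, each coordinate ends in a value $\ge 2$, which by Theorem~\ref{NG-block-thm} places this block in NG-2 but not NG-1. For $k \ge 2$ and $n \le \binom{k+2}{2}-2$, Remark~\ref{end-in-one-rem}(i) forces every element of $Dis_k(n)$ to end in $1$; in particular the $\beta$-coordinate of every element of $A(n,k)$ ends in $1$, so by Theorem~\ref{NG-block-thm} every such block is in NG-1.

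The substantive case is $k \ge 2$ and $n \ge \binom{k+2}{2}-1$, where I will produce one witness for each of the three sets. The candidates are $[\tknpr\,|\,\tknpr]$ for $NG^\ast_1(n,k)$, $[\tkn\,|\,\tkn]$ for $NG^\ast_2(n,k)$, and $\tkntknpr$ for $NG_1(n,k) \cap NG_2(n,k)$. Each is an $S$-block in $A(n,k)$: both coordinates lie in $Dis_k(n)$, we have $\beta \succeq \alpha$ by Lemmas~\ref{tau-pr-max-lem} and \ref{tau-min-lem}, and Theorem~\ref{unbal-block-thm} confirms each is unbalanced since the coordinates have equal length. Since $k \ge 2$, the last part of $\tknpr$ is $1$ directly from Definition~\ref{tau-max-def}, which handles the first witness. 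The other two witnesses both require knowing that the last part of $\tkn$ is at least $2$.

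The main obstacle is this last verification, and it is essentially a matter of unpacking Definition~\ref{tau-min-def}. Setting $n_k := n - \binom{k+1}{2}$ and using the identity $\binom{k+2}{2}-\binom{k+1}{2} = k+1$, the hypothesis $n \ge \binom{k+2}{2}-1$ becomes $n_k \ge k$. Writing $n_k = qk+r$ with $0 \le r < k$ then forces $q \ge 1$, so every entry of $\hat{\tau}_k(n_k)$ is at least $1$, and adding the shift $(k, k-1, \ldots, 1)$ from Definition~\ref{tau-min-def} gives last part of $\tkn$ at least $1+1=2$. This places $[\tkn\,|\,\tkn]$ in $NG^\ast_2(n,k)$ and, together with the previous observation that $\tknpr$ ends in $1$, places $\tkntknpr$ in $NG_1(n,k) \cap NG_2(n,k)$, completing the argument.
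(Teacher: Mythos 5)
Your proof is correct and follows essentially the same route as the paper: the $k=1$ and small-$n$ cases via Remark~\ref{end-in-one-rem}, and the three witnesses $[\tknpr|\tknpr]$, $[\tkn|\tkn]$, and $\tkntknpr$ for the last case, classified by Theorem~\ref{NG-block-thm}. The only difference is that you spell out the verification that the last part of $\tkn$ is at least $2$ when $n\ge\binom{k+2}{2}-1$, which the paper leaves implicit.
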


\begin{proof}
For $k=1$ the only $S$-block in $A(n,k)$ is $[n|n]$ which is $NG$-2 by Theorem~\ref{NG-block-thm}.
For $k \ge 2$ the result follows from Remark~\ref{end-in-one-rem} and the observations that $[\tkn|\tkn] \in NG^\ast_2(n,k) $, $[\tknpr|\tknpr] \in NG^\ast_1(n,k) $,  and 
$[\tkn|\tknpr] \in NG_1(n,k) \cap NG_2(n,k)$.   
\label{empty-NG-prop}
\end{proof}

\begin{Thm}
The subposet $NG_1(n,k)$ of $S$-$Block(n)$  is an amphora  consisting of those $S$-blocks that majorize $\tkntknpr$
and are majorized by some NG-1 element of $(Max)^n_{k}.$  Similarly,   $NG_2(n,k)$ is an amphora  consisting of those $S$-blocks that majorize $\tkntknpr$
and are majorized by some NG-2 element of $(Max)^n_{k}.$
\label{Ak-ng1-thm}
\end{Thm}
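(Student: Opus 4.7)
The plan is to verify that each of $NG_1(n,k)$ and $NG_2(n,k)$ fits Definition~\ref{amphora-def} by exhibiting the prescribed minimum element $x$ and antichain $A$ of maximal elements. The three key ingredients are Theorem~\ref{Ak-thm}, which already casts $A(n,k)$ as an amphora with minimum $\tkntknpr$ and maximals $(Max)^n_{k}$; Theorem~\ref{NG-block-thm}, which translates the NG-1 (resp.\ NG-2) property of an unbalanced $S$-block into a condition on the last part of $\beta$ (resp.\ $\alpha$); and Lemma~\ref{NG-one-lem}, which transfers those last-part conditions along $Dis_k(n)$-majorization.

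For $NG_1(n,k)$ I would take the candidate minimum to be $\tkntknpr$ itself. The last part of $\tknpr$ is $1$ by Definition~\ref{tau-max-def}, so Theorem~\ref{NG-block-thm} certifies that $\tkntknpr$ is NG-1. Let $A$ be the set of $[\gamma|\gamma] \in (Max)^n_{k}$ whose $\gamma$ ends in $1$; this is an antichain because $(Max)^n_{k}$ is. Given $\ab \in NG_1(n,k) \subseteq A(n,k)$, Theorem~\ref{Ak-thm} gives $\ab \succeq \tkntknpr$, and $[\beta|\beta]$ lies in $A$ and majorizes $\ab$ because the last part of $\beta$ is $1$. Conversely, suppose $\ab \in S$-$Block(n)$ majorizes $\tkntknpr$ and is majorized by some $[\gamma|\gamma] \in A$. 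Then $\ab \in A(n,k)$ by Theorem~\ref{Ak-thm}, and Definition~\ref{block-maj-def} yields $\beta \succeq \gamma$ in $Dis_k(n)$; since $\gamma$ ends in $1$, Lemma~\ref{NG-one-lem} forces $\beta$ to end in $1$, and thus $\ab \in NG_1(n,k)$ via Theorem~\ref{NG-block-thm}.

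For $NG_2(n,k)$ the same template applies with the roles of $\alpha$ and $\beta$ reversed: the candidate antichain $A$ consists of those $[\gamma|\gamma] \in (Max)^n_{k}$ with $\gamma$ ending in a part of size at least $2$, and for $\ab \in NG_2(n,k)$ the witness maximal is $[\alpha|\alpha]$. Given $\ab$ majorizing $\tkntknpr$ and majorized by some $[\gamma|\gamma] \in A$, I would combine $\gamma \succeq \alpha$ with the second clause of Lemma~\ref{NG-one-lem} to deduce that the last part of $\alpha$ is at least $2$, so $\ab$ is NG-2.

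The one delicate point, which I view as the main obstacle, is the NG-2 case when no NG-2 maximal exists, i.e.\ when every $\gamma \in Dis_k(n)$ ends in $1$. By Remark~\ref{end-in-one-rem}(i) this is precisely the range $n \le \binom{k+2}{2}-2$, and by Proposition~\ref{empty-NG-prop} it is exactly the range in which $NG_2(n,k)$ is empty; here the claimed amphora is vacuously empty, which I would handle by adopting the convention that the empty set is a degenerate amphora. When $n \ge \binom{k+2}{2}-1$, the arithmetic of Definition~\ref{tau-min-def} forces the last part of $\tau_k(n)$ to be at least $2$, so $\tkntknpr$ actually lies in $NG_2(n,k)$ and is a bona fide minimum. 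Establishing this alignment between the quantitative threshold in Definition~\ref{tau-min-def} and the NG-2 condition via Remark~\ref{end-in-one-rem} is the one place where care with the construction is required; the rest reduces to routine bookkeeping with the definitions above.
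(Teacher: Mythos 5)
Your proposal is correct and follows essentially the same route as the paper's proof: the forward direction via Theorem~\ref{Ak-thm} with $[\beta|\beta]$ (resp.\ $[\alpha|\alpha]$) as the witness maximal element, and the converse via Definition~\ref{block-maj-def}, Lemma~\ref{NG-one-lem}, and Theorem~\ref{NG-block-thm}. Your additional attention to the degenerate case where $NG_2(n,k)$ is empty (and to verifying that $\tkntknpr$ itself satisfies the relevant NG condition) is a point the paper's proof passes over silently, but it does not change the argument.
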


\begin{proof}
First consider $\ab \in NG_1(n,k)$. By Theorem~\ref{Ak-thm}, $\ab$  majorizes
$\tkntknpr$, and is majorized by $[\beta|\beta]$ in $(Max)^n_k$. 

Conversely, let $\ab$ be in $S$-$Block(n)$ such that it majorizes $\tkntknpr$ and is
majorized by an NG-1 element  $[\alpha'|\beta'] \in (Max)^n_k$. Then, by Definition~\ref{Ak-def}, $\ab\in A(n,k)$ and by Theorem~\ref{NG-block-thm}, the last part of $\beta'$ is 1.   Now
by Definition~\ref{block-maj-def} and Lemma~\ref{NG-one-lem}, 
  the last part of $\beta$ is 1, and so $\ab \in NG_1(n,k)$. 

The proof for $NG_2(n,k)$ is analogous.
%, except that we use Lemma~\ref{last-part-lem-2} instead of Lemma~\ref{NG-block-thm}.
 \end{proof}

 \begin{Thm}
 The intersection $NG_1(n,k) \cap NG_2(n,k)$  is an amphora consisting of those  $S$-blocks that majorize $\tkntknpr$
and are majorized by an $S$-block $[\alpha|\beta]$  where $\alpha,\beta \in Dis_k(n)$, the last part of $\alpha$ is at least 2, the last part of $\beta$ equals 1, and $\beta  \gtrdot \alpha$.
%where  $[\alpha|\alpha]$ is an NG-2 element of $(Max)^n_{k}$,  $[\beta|\beta]$ is an NG-1 element of $(Max)^n_{k}$ and $\beta  \gtrdot \alpha$.
\label{NG-intersect-thm}
 \end{Thm}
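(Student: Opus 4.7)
The plan is to verify the amphora structure by establishing the minimum element, describing the antichain of maximal elements, and showing the two-sided containment. Throughout, we apply Lemma~\ref{NG-one-lem} to propagate the ``last part equal to $1$'' and ``last part at least $2$'' conditions along majorization inside $Dis_k(n)$.

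First I would handle the easy direction: an arbitrary element $\ab \in NG_1(n,k) \cap NG_2(n,k)$ lies in $A(n,k)$, and Theorem~\ref{Ak-thm} already supplies $\ab \succeq \tkntknpr$. By Theorem~\ref{NG-block-thm}, the last part of $\alpha$ is at least $2$ and the last part of $\beta$ equals $1$. To produce a maximal element of the stated form above $\ab$, I will mimic the chain argument from the proof of Theorem~\ref{Akk-thm}. Pick a maximal chain $\beta = \gamma_1 \gtrdot \gamma_2 \gtrdot \cdots \gtrdot \gamma_m = \alpha$ in $Dis(n)$. Because $\alpha,\beta \in Dis_k(n)$, Lemma~\ref{length-lemma} forces every $\gamma_i \in Dis_k(n)$. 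The last part of $\gamma_1$ is $1$ while the last part of $\gamma_m$ is at least $2$, so choosing $i$ to be the largest index with last part of $\gamma_i$ equal to $1$ gives an index where $\gamma_{i+1}$ has last part $\geq 2$ and $\gamma_i \gtrdot \gamma_{i+1}$. Then $[\gamma_{i+1}|\gamma_i]$ is an $S$-block of the required form, and it majorizes $\ab$ by Definition~\ref{block-maj-def} since $\gamma_i \preceq \beta$ and $\gamma_{i+1} \succeq \alpha$.

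Next I would prove the reverse containment. Let $\ab$ majorize $\tkntknpr$ and be majorized by some $[\alpha'|\beta']$ with $\alpha',\beta' \in Dis_k(n)$, last part of $\alpha'$ at least $2$, last part of $\beta' = 1$, and $\beta' \gtrdot \alpha'$. From $\alpha' \succeq \alpha \succeq \tkn$ with $\alpha', \tkn \in Dis_k(n)$, Lemma~\ref{length-lemma} yields $\alpha \in Dis_k(n)$, and similarly $\beta \in Dis_k(n)$; hence $\ab \in A(n,k)$. Applying Lemma~\ref{NG-one-lem} to $\alpha' \succeq \alpha$ propagates ``last part $\geq 2$'' from $\alpha'$ down to $\alpha$, and applying it to $\beta \succeq \beta'$ propagates ``last part $= 1$'' from $\beta'$ up to $\beta$. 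Theorem~\ref{NG-block-thm} then places $\ab$ in $NG_1(n,k) \cap NG_2(n,k)$.

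The main obstacle is the construction in the first direction, namely producing a maximal element of the precise form described. The subtlety is that covering relations in $Dis(n)$ may change the last part in either direction as one moves up the chain, so one has to pick the correct transition index rather than assume monotonic behavior; taking the largest $i$ with $\gamma_i$ having last part $1$ handles this cleanly. Once the chain argument is in place, amphora-hood follows because $\tkntknpr$ is a common lower bound and the described maximal $S$-blocks form an antichain (each has $\beta \gtrdot \alpha$, so by Proposition~\ref{TC-prop} they are pairwise incomparable).
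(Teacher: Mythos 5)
Your proposal is correct and follows essentially the same route as the paper's proof: the same chain-of-covers argument locates the transition index producing the maximal element $[\gamma_{i+1}|\gamma_i]$, and the converse direction uses Lemma~\ref{length-lemma}, Lemma~\ref{NG-one-lem}, and Theorem~\ref{NG-block-thm} exactly as the paper does. Your one small economy is invoking Proposition~\ref{TC-prop} for the antichain property of the maximal elements (they all satisfy $\beta \gtrdot \alpha$, so they lie in $TC(n)$), where the paper reproves incomparability directly; also note that by Lemma~\ref{NG-one-lem} the last part along the chain actually changes monotonically, so the caution about non-monotone behavior is unnecessary, though your choice of transition index is valid either way.
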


\begin{proof}
Let $M$ be the set of $S$-blocks $[ \alpha'|\beta']$ for which $\alpha', \beta' \in Dis_k(n)$, the last part of $\alpha'$ is at least 2, the last part of $\beta'$ equals 1, and $\beta'  \gtrdot  \alpha'$.

First consider $\ab \in NG_1(n,k) \cap NG_2(n,k) $.   By Theorem~\ref{Ak-ng1-thm}, 
 $\ab \succeq [\tkn|\tau'_{k-1}(n)]$.  Next we show $\ab$ is majorized by an element of $M$ and that $M$ is an antichain in $S$-$Block(n)$. We know $\beta \succeq \alpha$, so
   in $Dis_k(n)$ there exists a chain of covering relations:  
$\beta = \gamma_1 \gtrdot \gamma_2 \gtrdot \gamma_3 \gtrdot  \cdots \gtrdot \gamma_m = \alpha$.    Since the last part of $\alpha$ is at least 2, and the last part of $\beta$ equals 1,  there exists $r:1 \le r < m$ for which  the last part of $\gamma_r $  is 1 and the last part of $\gamma_{r+1}$ is at least 2.   Thus $[\gamma_{r+1}|\gamma_r] \in M$ and $\ab \preceq [\gamma_{r+1}|\gamma_r] $.  Now to show that $M$ is an antichain, suppose for a contradiction that $\abone, \abtwo \in M$ with $\abone \prec \abtwo$.   Then $\alpha_1 \preceq \alpha_2 \preceq \beta_2 \preceq \beta_1$ and $\alpha_2 \neq \beta_2$ since the last part of  $\alpha_2$ is at least 2 and the last part of $\beta_2$ equals 1.  However, $\beta_1 \gtrdot \alpha_1$, so  $\alpha_1 = \alpha_2$ and $\beta_1 = \beta_2$,  contradicting $\abone \prec \abtwo$.   

 Conversely, let $\ab$ be an $S$-block that majorizes $[\tkn|\tau'_{k-1}(n)]$ and is majorized by an element  $[\alpha'|\beta']$ of $M$.  
 Then $\tkntknpr \preceq \ab \le [\alpha'|\beta'] \preceq  [\alpha'|\alpha']$ so $\ab \in A(n,k)$.  Since $\beta \succeq \beta'$ and $\abpr \in M$, Lemma~\ref{NG-one-lem}
 implies that the last part of $\beta$ is 1, so $\ab \in NG_1(n,k)$ by Theorem~\ref{NG-block-thm}.
 % Similarly,  $\tkntknpr \preceq \ab \preceq [\alpha'|\beta'] \le [\beta'|\beta']$.   Since
   Similarly,  $\alpha \preceq \alpha'$ and $\abpr \in M$, so Lemma~\ref{NG-one-lem} also implies that the last part of $\alpha$ is at least 2.  Thus 
  $\ab \in NG_2(n,k)$  and by Theorem~\ref{NG-block-thm},
and we conclude that  $\ab \in NG_1(n,k) \cap NG_2(n,k)$.
   %Since $\tkn$, $\tknpr$, and all elements of $(Max)^n_{k}$ are in $Dis_k(n)$, Lemma~\ref{length-lemma} implies that $\alpha,\beta \in Dis_k(n)$ and thus $\ab \in A_k$.
 \end{proof}

\begin{Thm}
The posets of $NG_1^*(n,k)$   and  $NG_2^*(n,k)$  are  amphoras.
Moreover,
 there is a bijection  that preserves majorization between the    first poset  and the amphora  $A(n-k, k-1)$ and one between the second poset and the amphora $A(n-k, k)$.

\label{Ak-ng-only-prop}
\end{Thm}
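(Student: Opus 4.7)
The plan is to construct explicit order isomorphisms $\phi_1:NG_1^*(n,k)\to A(n-k,k-1)$ and $\phi_2:NG_2^*(n,k)\to A(n-k,k)$ and then transport the known amphora structure of the targets back along $\phi_i^{-1}$. By Theorem~\ref{NG-block-thm}, $NG_1^*(n,k)$ consists of $S$-blocks $\ab\in A(n,k)$ in which both $\alpha$ and $\beta$ have last part equal to $1$, while $NG_2^*(n,k)$ consists of those in which both last parts are at least $2$. Define $\phi_1$ on $\ab$ by stripping the trailing $1$ from each of $\alpha$ and $\beta$ and subtracting $1$ from every remaining part, and define $\phi_2$ by subtracting $1$ from every part of $\alpha$ and of $\beta$.

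A short calculation confirms that $\phi_1$ lands in $A(n-k,k-1)$ and $\phi_2$ in $A(n-k,k)$: the images have $k-1$ (respectively $k$) strictly decreasing positive parts summing to $n-k$, and the majorization $\beta\succeq\alpha$ transports to the images via the identity $\sum_{i=1}^{j}(x_i-1)=\sum_{i=1}^{j}x_i-j$. The inverses are obtained by adding $1$ to every part, and, for $\phi_1^{-1}$, appending a trailing $1$. The same identity establishes that $\phi_1$ and $\phi_2$ preserve and reflect the block majorization order, so they are genuine order isomorphisms, which gives the claimed bijections.

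It remains to deduce that $NG_1^*(n,k)$ and $NG_2^*(n,k)$ are amphoras in $S$-$Block(n)$. Applying Theorem~\ref{Ak-thm} to the targets and pulling back along $\phi_i^{-1}$ produces candidate minimum elements and candidate antichains of maximal elements; the maximal candidates project onto subsets of $(Max)^n_k$ consisting of blocks $[\delta|\delta]$ whose last part has the correct value, and they remain antichains in $S$-$Block(n)$ by Lemma~\ref{max-elts-lem}. The substantive step is to verify that every $y\in S$-$Block(n)$ squeezed between the candidate minimum and a candidate maximal element actually lies in $NG_i^*(n,k)$, not merely in $A(n,k)$.

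This last verification is the main obstacle. Writing $y=\ab$ and using Lemma~\ref{length-lemma}, I would first confirm $\alpha,\beta\in Dis_k(n)$. Then Lemma~\ref{NG-one-lem} propagates the last-part condition: for $NG_1^*$, the ``last part $=1$'' property of the $\alpha$-component of the candidate minimum propagates upward to $\alpha$, and then through $\beta\succeq\alpha$ to $\beta$; for $NG_2^*$, the ``last part $\geq 2$'' property of the candidate maximal element propagates downward through the contrapositive direction of the lemma to both $\alpha$ and $\beta$. This closes the interval characterization and confirms the amphora structure, completing the argument.
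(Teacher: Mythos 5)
Your bijections are exactly the ones the paper uses (delete the trailing $1$ from $\alpha$ and $\beta$ and subtract $1$ from the remaining parts for $NG_1^*(n,k)$; subtract $1$ from every part for $NG_2^*(n,k)$), and your verification that they are well-defined and order-preserving matches the paper's, which in fact says little more than ``it is not hard to see.'' You go further than the paper in one respect: you explicitly flag that being order-isomorphic to an amphora of a \emph{different} poset does not by itself make a subposet an amphora of $S$-$Block(n)$, and you attempt the interval verification in the ambient poset. That extra step is the right instinct, and for $NG_1^*(n,k)$ your argument is correct: the $\alpha$-component of the candidate minimum ends in $1$, Lemma~\ref{NG-one-lem} pushes ``last part $=1$'' upward to $\alpha$, and $\beta \succeq \alpha$ pushes it on to $\beta$.

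However, your verification for $NG_2^*(n,k)$ contains a directional error that, as written, fails. You claim the ``last part $\geq 2$'' property of a candidate maximal element $[\gamma|\gamma]$ propagates downward to \emph{both} $\alpha$ and $\beta$. By Definition~\ref{block-maj-def}, $\ab \preceq [\gamma|\gamma]$ means $\alpha \preceq \gamma \preceq \beta$. For $\alpha$ this works: $\gamma \succeq \alpha$ and Lemma~\ref{NG-one-lem} give last part of $\alpha$ at least the last part of $\gamma$, hence at least $2$. But for $\beta$ the comparison runs the other way: $\beta \succeq \gamma$ forces the last part of $\beta$ to be \emph{at most} the last part of $\gamma$, so no lower bound on the last part of $\beta$ can be extracted from the maximal element. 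The bound must instead come from the candidate \emph{minimum} $[\alpha_0|\beta_0]$: the relation $\ab \succeq [\alpha_0|\beta_0]$ gives $\beta \preceq \beta_0$, and $\beta_0$ (the pullback of $\tau'_k(n-k)$, whose last part is $2$) hands ``last part $\geq 2$'' down to $\beta$ via Lemma~\ref{NG-one-lem}. Note the asymmetry with the $NG_1^*$ case, where the minimum alone sufficed; here you need the $\beta$-component of the minimum and the maximal element (or equivalently $\alpha \preceq \beta$) to control the two components. With that substitution your argument closes, so the gap is local and repairable rather than structural.
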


\begin{proof} 
For any $S$-block $\ab$ in $NG^*_1(n,k)$, the last part of $\alpha$ and the last part of $\beta$ will be 1 by Theorem~\ref{NG-block-thm}.  Similarly, for any  $\ab$ in $NG^*_2(n,k)$, the last part of $\alpha$ and the last part of $\beta$ will be at least 2 by Theorem~\ref{NG-block-thm}.  
Now, it is not hard to see that the following are bijections that preserve majorization.    In the first case,  for each    $ \ab \in NG^*_1(n,k)$, eliminate the $k$th part  of $\alpha$ and the $k$th part of $\beta$, each of    which is 1, and subtract 1 from each of the other parts.  This results in an $S$-block in $A(n-k, k-1)$.
 In the second case,  for each  $\ab \in NG^*_2(n,k)$,  each part  of $\alpha$ and each part of $\beta$ is at least 2, so subtract 1  from each part; this results in an $S$-block in $A(n-k, k)$.
\end{proof}

 In Theorem~\ref{below-balanced-prop} we showed that the property of being unbalanced is upward closed, and we next show that within the class of unbalanced $S$-blocks,   the properties of being NG-1 and NG-2 are downward closed.

\begin{Thm}
Let $\abone, \abtwo $  be unbalanced elements of $S$-$Block(n)$ with  $\abone \succeq \abtwo$.  If $\abone$ is NG-1 then  $\abtwo$ is also NG-1.
If $\abone$ is NG-2 then  $\abtwo$ is also NG-2.
\label{NG-one-prop}
\end{Thm}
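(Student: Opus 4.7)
The plan is to translate the NG-1 and NG-2 conditions into statements about the last parts of $\beta$ and $\alpha$ via Theorem~\ref{NG-block-thm}, and then transport these properties under majorization using Lemma~\ref{NG-one-lem}. The one non-trivial preliminary step is to verify that all four partitions live in a common $Dis_k(n)$, which is required for Lemma~\ref{NG-one-lem} to apply.

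First I would unpack $\abone \succeq \abtwo$ via Definition~\ref{block-maj-def} into the two comparabilities $\alpha_1 \succeq \alpha_2$ and $\beta_2 \succeq \beta_1$. Applying Lemma~\ref{length-lemma} to each gives ${\rm len}(\alpha_1) \le {\rm len}(\alpha_2)$ and ${\rm len}(\beta_2) \le {\rm len}(\beta_1)$. Since $\abone$ and $\abtwo$ are unbalanced, Theorem~\ref{unbal-block-thm} gives ${\rm len}(\alpha_i) = {\rm len}(\beta_i)$ for $i=1,2$. Chaining these together,
\[
{\rm len}(\beta_1) = {\rm len}(\alpha_1) \le {\rm len}(\alpha_2) = {\rm len}(\beta_2) \le {\rm len}(\beta_1),
\]
so equality holds throughout and there is a common $k$ with $\alpha_1,\alpha_2,\beta_1,\beta_2 \in Dis_k(n)$.

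With this in hand, both cases drop out. Suppose $\abone$ is NG-1. By Theorem~\ref{NG-block-thm} the last part of $\beta_1$ equals $1$, so applying the first clause of Lemma~\ref{NG-one-lem} to $\beta_1 \preceq \beta_2$ (playing the roles $\gamma_1 = \beta_1$, $\gamma_2 = \beta_2$) forces the last part of $\beta_2$ to equal $1$, whence $\abtwo$ is NG-1 by Theorem~\ref{NG-block-thm}. Suppose instead $\abone$ is NG-2. By Theorem~\ref{NG-block-thm} the last part of $\alpha_1$ is at least $2$, so applying the second clause of Lemma~\ref{NG-one-lem} to $\alpha_2 \preceq \alpha_1$ (playing the roles $\gamma_1 = \alpha_2$, $\gamma_2 = \alpha_1$) forces the last part of $\alpha_2$ to be at least $2$, whence $\abtwo$ is NG-2 by Theorem~\ref{NG-block-thm}.

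I do not anticipate a serious obstacle. The one step requiring care is the length equalization, since Lemma~\ref{NG-one-lem} is stated only for partitions that already share the same length and the same $n$; everything else is a direct translation between the NG-classification and the last-part condition supplied by Theorem~\ref{NG-block-thm}.
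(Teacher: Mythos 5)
Your proof is correct, but it is organized differently from the paper's. The paper establishes that $\abone$ and $\abtwo$ lie in a common amphora $A(n,k)$ (via the incomparability statement of Theorem~\ref{incomp-thm}), then invokes the amphora characterization of $NG_1(n,k)$ from Theorem~\ref{Ak-ng1-thm}: since $\abone$ is majorized by an NG-1 maximal element $[\gamma|\gamma]$, transitivity places $\abtwo$ below the same maximal element and above $\tkntknpr$, hence inside $NG_1(n,k)$. You instead bypass the amphora machinery entirely: you equalize all four lengths by chaining Lemma~\ref{length-lemma} with the unbalanced condition of Theorem~\ref{unbal-block-thm}, and then push the last-part conditions of Theorem~\ref{NG-block-thm} across the comparabilities $\beta_1 \preceq \beta_2$ and $\alpha_2 \preceq \alpha_1$ using Lemma~\ref{NG-one-lem}. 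The two arguments rest on the same ultimate ingredients (the proof of Theorem~\ref{Ak-ng1-thm} itself uses Lemma~\ref{NG-one-lem} in exactly the way you do), so yours is essentially an unwound, self-contained version of the paper's; what it buys is independence from Theorems~\ref{incomp-thm}, \ref{Ak-thm}, and \ref{Ak-ng1-thm}, at the cost of redoing the length bookkeeping that those results package up. You were right to flag the length equalization as the one delicate step, since Lemma~\ref{NG-one-lem} requires both partitions to lie in the same $Dis_k(n)$, and your chain of (in)equalities handles it correctly.
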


\begin{proof}
When $j \neq k$, any $S$-block in $A(n,k)$ is incomparable to any $S$-block in $A(n,j)$ by  Theorem~\ref{incomp-thm}.
Therefore,   $\abone$ and $\abtwo$  both belong to $A(n,k)$  for some $k$ and by Theorem~\ref{Ak-thm}
they both majorize $\tkntknpr$.    Since $\abone$ is  NG-1, by Theorem~\ref{Ak-ng1-thm}, it is majorized by some  NG-1 $S$-block $[\gamma|\gamma] \in (Max)^k_n$.  Then by transitivity, $\abone$ is also  majorized by   $[\gamma|\gamma] \in (Max)^k_n$, thus $\abtwo \in NG_1(n,k)$ and is an NG-1 $S$-block.  The proof of the second part is analogous.
\end{proof}

\section{The split block lattice}

 \label{sec-5}

In this section, we add a maximum and minimum element to the split block poset in order to form a lattice, and determine where the meets and joins of balanced and unbalanced $S$-blocks occur. The  meet operation is not well-defined on the poset $S$-$Block(n)$.  For example,  if  $\ab = [4321 | 721] \wedge [532 | 73]$ then len$(\alpha) \ge 4$ and len$(\beta) \le 2$ by Lemma~\ref{length-lemma} and Definition~\ref{block-maj-def}, and thus $\ab$ is not an $S$-$Block$.  Similarly, it is not possible to take the join of two maximal elements in $S$-$Block(n)$.
 However, $S$-$Block(n)$ can be extended to a lattice  when a maximum element $\hat{1}$   and   a minimum element $\hat{0}$  are added, and in the next theorem, we show that  the meet and join are well-defined.

%\begin{Def} {\rm
%Let  $\vee$ and $\wedge$ be the meet and join (respectively) of the lattice $Dis(n)$ and let $\alpha_1, \beta_1, \alpha_2, \beta_2 \in Dis(n)$.  Define 
%$\abone \wedge \abtwo = [\ama \  |  \ \bjb]$,  and define
%  $\abone \vee \abtwo = [\aja \  |  \ \bmb]$.  }
%\label{meet-join-def}

%\end{Def}

\begin{Thm}  \label{poset-theorem} The poset
     $S$-$Block(n) \cup \{ \hat{0},\hat{1} \}$ is a lattice.  If $\abone$ and $\abtwo$ are $S$-blocks and $\ell = \max \{ len(\alpha_1) - len(\beta_2), len(\alpha_2) - len(\beta_1)\}$ then the meet and join  of  $\abone$ and $\abtwo$ are given as follows.

{\rm (i)}  If $\ell=0$ or $1$, then $\abone \wedge \abtwo =[\ama \  |  \ \bjb]$, which is an $S$-$block$. Otherwise, $\abone \wedge \abtwo=\hat{0}$. 

{\rm (ii) } If  $ (\bmb) \succeq (\aja)$, then $  \abone \vee \abtwo =  [ \aja \  |  \ \bmb]$, which is an $S$-$block$. Otherwise, $  \abone \vee \abtwo = \hat{1}$.

\label{block-lattice-thm}
\end{Thm}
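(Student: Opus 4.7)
The plan is to leverage the fact that $Dis(n)$ is a lattice (so the componentwise candidates $[\ama \,|\, \bjb]$ and $[\aja \,|\, \bmb]$ are already well-defined partitions) and then verify case by case whether each candidate satisfies the two defining conditions of an $S$-block from Definition~\ref{block-defn}: the internal weak majorization and the length condition $\text{len}(\alpha)-\text{len}(\beta)\in\{0,1\}$. The absorbing elements $\hat 0$ and $\hat 1$ handle the cases where no suitable $S$-block lower or upper bound exists.

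For the meet, I would first check the internal majorization by reading off the chain
$$\bjb \succeq \beta_1 \succeq \alpha_1 \succeq \ama,$$
where the outer inequalities come from the definitions of meet and join in $Dis(n)$ and the middle inequality uses that $\abone$ is an $S$-block. Next, Proposition~\ref{mj-length-prop} gives
$$\text{len}(\ama)-\text{len}(\bjb)=\max\{\text{len}(\alpha_1),\text{len}(\alpha_2)\}-\min\{\text{len}(\beta_1),\text{len}(\beta_2)\}.$$
A short case analysis, exploiting that $\text{len}(\alpha_i)-\text{len}(\beta_i)\in\{0,1\}$ for each $i$, shows this length difference lies in $\{0,1\}$ precisely when $\ell\leq 1$ and equals $\ell$ once $\ell\geq 1$. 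In the case $\ell\in\{0,1\}$ the candidate is an $S$-block, and since every $S$-block lower bound $\ab$ of $\abone,\abtwo$ satisfies $\alpha\preceq\ama$ and $\beta\succeq\bjb$, the candidate is the meet. In the case $\ell\geq 2$, any such lower bound would have length difference at least $\text{len}(\ama)-\text{len}(\bjb)\geq 2$ by Lemma~\ref{length-lemma}, violating the $S$-block length condition; hence no $S$-block lower bound exists and the meet is $\hat 0$.

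The join argument runs symmetrically. The internal majorization $\bmb\succeq\aja$ is precisely the hypothesis of case (ii), and
$$\text{len}(\aja)-\text{len}(\bmb)=\min\{\text{len}(\alpha_1),\text{len}(\alpha_2)\}-\max\{\text{len}(\beta_1),\text{len}(\beta_2)\}$$
is nonnegative by Lemma~\ref{length-lemma} applied to the hypothesis, and is at most $1$ by a two-case analysis on which of $b_1,b_2$ is larger (again using $\text{len}(\alpha_i)-\text{len}(\beta_i)\in\{0,1\}$). Thus $[\aja\,|\,\bmb]$ is an $S$-block, and since it is majorized by every $S$-block upper bound of $\abone,\abtwo$, it is the join. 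When instead $\bmb\not\succeq\aja$, any $S$-block upper bound $\ab$ would produce the chain $\bmb\succeq\beta\succeq\alpha\succeq\aja$, forcing $\bmb\succeq\aja$, a contradiction; so the join is $\hat 1$.

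The only step that is not a direct unpacking of Definitions~\ref{block-defn} and \ref{block-maj-def} is the pair of length-difference case analyses, and even these amount to comparing four integers that differ pairwise by at most $1$. Once cases (i) and (ii) are verified, the absorbing properties of $\hat 0$ and $\hat 1$ supply meets and joins for every remaining pair (including those involving $\hat 0$ or $\hat 1$ themselves), completing the verification that $S$-$Block(n)\cup\{\hat 0,\hat 1\}$ is a lattice.
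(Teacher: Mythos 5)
Your proposal is correct and follows essentially the same route as the paper's proof: both use the lattice structure of $Dis(n)$ to form the componentwise candidates, verify the internal majorization via the chain $\bjb \succeq \beta_1 \succeq \alpha_1 \succeq \ama$ (and its analogue for the join), control the lengths with Proposition~\ref{mj-length-prop} and Lemma~\ref{length-lemma}, and rule out $S$-block lower (resp.\ upper) bounds in the degenerate cases by the same length (resp.\ majorization-chain) contradiction. No gaps.
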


\begin{proof}
 We begin by proving (i).
Let $\abone$ and $\abtwo$ be $S$-blocks.     Thus $\beta_1 \succeq \alpha_1$ and $\beta_2 \succeq \alpha_2$, so len$(\beta_1) \le  $ len$(\alpha_1)$ and len$(\beta_2) \le  $ len$(\alpha_2)$ by Lemma~\ref{length-lemma}.  We next observe that $\ell \ge 0$ for otherwise, 
len$(\beta_1) \le $ len$(\alpha_1) < len(\beta_2) \le len(\alpha_2) < len(\beta_1)$, a contradiction. 

 If $\ell \ge 2$, we show there is no $S$-block below both $\abone$ and $\abtwo$ in $S$-$Block(n)$.    For a contradiction, suppose $\ell \ge 2$ and there exists an $S$-block $\abhat$ with $\abhat \preceq \abone$ and $\abhat \preceq \abtwo$.  Without loss of generality, we may assume len$(\alpha_1) - $ len$(\beta_2) \ge 2$.    Then $\hat{\alpha} \preceq  \alpha_1$ and $\hat{\beta} \succeq \beta_2$ so 
len$(\hat{\beta}) \le $ len$(\beta_2) \le $ len$(\alpha_1) -2 \le $ len$(\hat{\alpha}) -2$, contradicting the assumption that $\abhat$ is an $S$-block.
Thus if $\ell \not\in \{0,1\}$ then there is no $S$-block below both $\abone$ and $\abtwo$.

Next consider the case that $\ell =0$ or $\ell = 1$.     First we show $[\ama \  |  \ \bjb]$ is an $S$-block and then we show it is the greatest lower bound of $\abone$ and $\abtwo$.
Since $Dis(n)$ is a lattice, we know $(\ama), (\bjb) \in Dis(n)$.  Furthermore,  $(\bjb) \succeq \beta_1 \succeq \alpha_1 \succeq (\ama)$, thus $(\bjb) \succeq (\ama)$.    By Proposition~\ref{mj-length-prop},
len$(\ama) = \max \{ $len$(\alpha_1), $len$(\alpha_2)\}$ and len$(\bjb) = \min \{$len$(\beta_1), $len$(\beta_2)\}$, so $0 \le $len$(\ama) - $len$(\bjb) \le 1$.    Thus $[\ama \  |  \ \bjb]$ satisfies the conditions of Definition~\ref{block-defn},  i.e., is an $S$-block.

We know $[\ama \  |  \ \bjb] $  is a lower bound for $\abone$ and $\abtwo$ and it remains to show it is the greatest lower bound.  Supppose $\abhat \preceq \abone$ and $\abhat \preceq \abtwo$.  The first implies $\hat{\beta} \succeq \beta_1$ and $ \hat{\alpha} \preceq \alpha_1$, and the second implies that  $\hat{\beta} \succeq \beta_2$ and $  \hat{\alpha}\preceq \alpha_2$.  Therefore, $\hat{\beta} \succeq (\bjb)$ and $\hat{\alpha} \preceq (\ama)$, which implies $[\ama \  |  \ \bjb]  \succeq \abhat$.

In proving (ii), note that $(\bmb), (\aja) \in Dis(n)$.  If $(\bmb) \succeq (\aja)$ then len$(\bmb ) \le$ len$ (\aja)$ and len$(\aja) \le $len$(\alpha_1) \le $len$(\beta_1) + 1 \le $len$(\bmb) + 1$, so $ [\aja \ | \bmb]$ satisfies the  length condition of Definition~\ref{block-defn}.
Thus $[\aja \ | \bmb]$ is an $S$-block if and only if $(\bmb) \succeq (\aja)$.  The remainder of the proof is analogous to (i).
\end{proof}

%\subsection{Properties of meets and joins} 

Recall that the unbalanced $S$-blocks are in   amphoras of the form  $A(n,k)$ (Definition~\ref{Ak-def})
and the balanced $S$-blocks are in  amphoras of the form $A(n,k,k-1)$ (Definition~\ref{Akk-def}).
Tables~\ref{table-of-results-bal-unbal} and \ref{table-of-results-NG} show how the properties balanced, unbalanced, NG-1 and NG-2 are passed down by the meet and join operations.  The results are proven in Theorem~\ref{unbal-bal-meet-join-thm} and Theorem~\ref{NG-meet-join-thm}, and examples of each possibility can be found in $S$-$Block(10)$ illustrated in Figure~\ref{fig-s-10}.   Since amphoras have a (unique) minimum element but can have multiple maximal elements, the results for the meet of two $S$-blocks is simpler than  those for  the join.

\begin{table}[h]  \label{t-one} \small
{\renewcommand{\arraystretch}{1.5}
\begin{tabular}{| lcccc  | } \hline
 & $\;\; \abone\;\;$ & $\;\;\abtwo\;\;\;$   & $\abone  \wedge \abtwo$  &  $ \abone  \vee \abtwo$  \\ \hline

1.  & $ A(n,k)$ & $A(n,k)$ &  $A(n,k)$ &   $A(n,k)$  or  $\hat{1}$  \\
2.  &  $A(n,k,k\!-\!1)$& $A(n,k,k\!-\!1)$ &  $A(n,k,k\!-\!1)$  & $A(n,k,k\!-\!1)$  or  $\hat{1}$  \\
3a.  $k> j\!+\!1$ & $A(n,k)$ & $A(n,j)$ &      $\hat{0}$   & $\hat{1}$\\
3b.  $k=j\!+\!1$ & $A(n,k)$ & $A(n,k\!-\!1)$ &   $A(n,k,k\!-\!1)$     & $\hat{1}$\\
4a. $k >j\! +\!1$ & $A(n,k,k\!-\!1)$ & $A(n,j,j\!-\!1)$  & $\hat{0}$ &    $\hat{1}$ \\
4b. $k=j\!+\!1$ & $A(n,k,k\!-\!1)$ & $A(n,k\!-\!1,k\!-\!2)$  & $\hat{0}$ &  $A(n,k\!-\!1)$ or $\hat{1}$ \\
  5. $k\! \in\!\{j, j\!-\!1\}$ & $A(n,k)$ &$A(n,j,j\!-\!1)$  & $A(n,j,j\!-\!1)$ &    $A(n,k)$ or $\hat{1}$ \\
6.  $k \!\not\in\!\{j,j\!-\!1\}$ &$A(n,k)$ & $ A(n,j,j\!-\!1)$  & $\hat{0}$ &    $\hat{1}$ \\

\hline
\end{tabular}
}
\normalsize

\caption{ When  len$(\alpha_1) = k$,  len$(\alpha_2) = j$, and  $\abone$ and $\abtwo$ belong to the amphoras specified in columns 2 and 3,  their meet and join are specified in columns 4 and 5.}  
\label{table-of-results-bal-unbal}
\end{table}
 
\begin{Thm}
Let $\abone, \abtwo \in S$-$Block(n) $, with $k = len(\alpha_1)$ and $j = len(\alpha_2)$. 
The results  for $\abone  \wedge \abtwo$ and $\abone  \vee \abtwo$ shown in Table~\ref{table-of-results-bal-unbal} are correct and in each case the join  is $  \hat{1}$ precisely when  $(\beta_1 \wedge \beta_2) \not\succeq ( \alpha_1 \vee \alpha_2) $.

\label{unbal-bal-meet-join-thm}
\end{Thm}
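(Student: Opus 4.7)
The plan is to apply Theorem~\ref{block-lattice-thm} row by row, using Proposition~\ref{mj-length-prop} to compute the lengths of the $\alpha$- and $\beta$-components of $\abone \wedge \abtwo$ and $\abone \vee \abtwo$, and then read off the amphora membership directly from Definitions~\ref{Ak-def} and \ref{Akk-def}. The hypothesis that $\abone$ lies in $A(n,k)$ or $A(n,k,k-1)$ pins down $\mathrm{len}(\alpha_1)$ and $\mathrm{len}(\beta_1)$ exactly, and similarly for $\abtwo$, so every subsequent calculation is forced.

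For the meet, I would first compute $\ell = \max\{\mathrm{len}(\alpha_1) - \mathrm{len}(\beta_2),\ \mathrm{len}(\alpha_2) - \mathrm{len}(\beta_1)\}$ in each row. In rows 3a, 4a, 4b, and 6 the length gap exceeds $1$, so Theorem~\ref{block-lattice-thm}(i) gives $\abone \wedge \abtwo = \hat{0}$. In the remaining rows $\ell \in \{0,1\}$ and the meet is $[\ama \mid \bjb]$; by Proposition~\ref{mj-length-prop} its first component has length $\max\{\mathrm{len}(\alpha_1),\mathrm{len}(\alpha_2)\}$ and its second has length $\min\{\mathrm{len}(\beta_1),\mathrm{len}(\beta_2)\}$. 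Comparing these two lengths identifies the amphora: equality places the meet in some $A(n,\cdot)$, and a gap of $1$ places it in some $A(n,\cdot,\cdot-1)$, matching the entries of Table~\ref{table-of-results-bal-unbal}.

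For the join, Theorem~\ref{block-lattice-thm}(ii) says I must test whether $(\bmb) \succeq (\aja)$; if so the join is $[\aja \mid \bmb]$, otherwise $\hat{1}$. Proposition~\ref{mj-length-prop} yields $\mathrm{len}(\aja) = \min\{\mathrm{len}(\alpha_1),\mathrm{len}(\alpha_2)\}$ and $\mathrm{len}(\bmb) = \max\{\mathrm{len}(\beta_1),\mathrm{len}(\beta_2)\}$. In rows 3a, 3b, 4a, and 6 a one-line length check shows $\mathrm{len}(\bmb) > \mathrm{len}(\aja)$, so by Lemma~\ref{length-lemma} the majorization $(\bmb) \succeq (\aja)$ fails automatically and the join is $\hat{1}$. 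In the remaining rows the two lengths are equal or differ by $1$ in the direction compatible with Definition~\ref{block-defn}, so the join is an $S$-block in the expected amphora exactly when the majorization holds, and $\hat{1}$ otherwise, which also gives the final ``precisely when'' assertion: in the rows where the table lists $\hat{1}$ unconditionally, the length obstruction guarantees $(\bmb) \not\succeq (\aja)$, and elsewhere the equivalence is immediate from Theorem~\ref{block-lattice-thm}(ii).

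The main obstacle is purely bookkeeping: there are eight rows plus the two subcases of row 5 (where $k=j$ and where $k=j-1$ produce the same amphora $A(n,j,j-1)$ for the meet by the same length computation), and one must be careful not to conflate $\mathrm{len}(\alpha_i)$ with $\mathrm{len}(\beta_i)$ when switching between $A(n,\cdot)$ and $A(n,\cdot,\cdot-1)$. No new ideas are required beyond Theorem~\ref{block-lattice-thm}, Proposition~\ref{mj-length-prop}, and Lemma~\ref{length-lemma}, together with the amphora definitions.
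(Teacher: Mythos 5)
Your proof is correct, but it follows a genuinely different route from the paper's. The paper derives the table almost entirely from the amphora machinery: for the meet it argues that two elements of a common amphora have their greatest lower bound inside that amphora (because the amphora has a minimum element), and it rules out nontrivial meets and joins in rows 3a, 3b, 4a, 4b, and 6 by passing to the height-one poset $W(n)$ and invoking Theorem~\ref{incomp-thm} to transfer the absence of bounds in $W(n)$ back to $S$-$Block(n)$; only for the surviving join cases does it fall back on Proposition~\ref{mj-length-prop}. You instead work uniformly from the explicit formulas of Theorem~\ref{block-lattice-thm}: computing $\ell$ row by row settles every meet entry (I checked all rows, including both subcases of row 5, and your length arithmetic is right — e.g.\ in row 4b one gets $\ell = k-(k-2) = 2$, hence $\hat{0}$), and for the joins the inequality $\mathrm{len}(\bmb) > \mathrm{len}(\aja)$ combined with Lemma~\ref{length-lemma} kills the majorization in exactly the rows where the table lists $\hat{1}$ unconditionally, which also delivers the final ``precisely when'' clause. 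Your approach buys a self-contained, mechanical verification that does not depend on Theorems~\ref{Ak-thm}, \ref{Akk-thm}, \ref{big-amphora-thm}, or \ref{incomp-thm}; the paper's approach buys a conceptual explanation of \emph{why} the table looks the way it does (meets and joins respect the zigzag structure of $W(n)$) at the cost of leaning on the earlier structural results. Both are complete proofs.
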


\begin{proof}

First we show the results for $\abone \wedge \abtwo$.
If $\abone$ and $\abtwo$ are elements of an amphora $A$  then the minimum element of $A$ is a lower bound for $\abone$ and $\abtwo$.  Thus the greatest lower bound is also in $A$  by Definition~\ref{amphora-def}.  This immediately proves cases (1) and (2).    It also proves cases (3b) and (5) when we apply Theorem~\ref{incomp-thm} and  consider the amphora in Theorem~\ref{big-amphora-thm}.

Now suppose $\abone$ is an element of amphora $A_1$ and  $\abtwo$ is an element of amphora $A_2$, where $A_1, A_2 \in W(n)$ and have no lower bound in $W(n)$.  Then by Theorem~\ref{incomp-thm}, $\abone$ and $\abtwo$ have no lower bound in $S$-$Block(n)$ and consequently, $\abone \wedge \abtwo = \hat{0}$ in $S$-$Block(n)$.  This proves  the remaining cases:  (3a), (4a), (4b), and (6).

It remains to show   the results for $\abone \vee \abtwo$, which we denote by $\abpr$.    Suppose $\abone$ is an element of amphora $A_1$ and  $\abtwo$ is an element of amphora $A_2$, where $A_1, A_2 \in W(n)$.  If there is no upper bound for $A_1$ and $A_2$ in $W(n)$, then by Theorems~\ref{partition-thm} and \ref{incomp-thm}
there is no $S$-block above both $\abone$ and $\abtwo$, and hence  $\abone \vee \abtwo = \hat{1}$.  This proves cases (3a), (3b),  (4a) and (6).

In case (1) we have $\abone, \abtwo \in A(n,k)$ for some $k$. 
 By Proposition~\ref{mj-length-prop}, $\alpha', \beta'\in Dis_k(n)$. Thus $\abpr \in A(n,k)$ if $\beta'\succeq \alpha'$ and otherwise $\abpr=\hat{1}$ by Theorem~\ref{poset-theorem}. The proof in case (2) is analogous.

To prove (4b), let $\abone\in A(n,k,k-1)$ and $ \abtwo \in A(n,k-1,k-2)$. 
In this case,  len$(\alpha_1) = k$, len$(\alpha_2) = k-1$,
len$(\beta_1) = k-1$ and len$(\beta_2) = k-2$.  
By Proposition~\ref{mj-length-prop}, len$(\alpha') = k-1$ and len$(\beta') = k-1$. If  $\beta'\succeq \alpha'$, then by Theorem~\ref{poset-theorem}, $\abpr$ is in $S$-$Block(n)$ and by Definition~\ref{Ak-def}
$\abpr\in A(n,k-1)$.  Otherwise $\abpr=\hat{1}$.

Finally, to prove (5), let $\abone\in A(n,k)$ and $\abtwo\in A(n,j,j-1)$ where $k=j$ or $k=j-1$.  
If $\beta' \not\succeq \alpha'$, then $\abpr$ not an $S$-block and $\abpr = \hat{1}$.
Otherwise,  $\beta' \succeq \alpha'$, so  $\abpr$  is an $S$-block.   Since $\abone \in A(n,k)$ and  $\abone \preceq \abpr$,  Theorem~\ref{incomp-thm} implies that $\abpr \in A(n,k)$.  This completes the proof.
%To prove (6), let $\abone\in A(n,k)$ and $\abtwo\in A(n,j,j-1)$ where $k\neq j$ and  $k\neq j-1$,
% and first consider the case $k\ge j+1$. 
%By Proposition~\ref{mj-length-prop}, len$(\alpha) = k$,  len$(\beta) = j-1$,  len$(\alpha') = j$ and len$(\beta') = k$. Thus, $\ab$ is not an $S$-block because the difference between $k$ and $j-1$ is greater than 2.  In addition,    len$(\beta')  > $len$(\alpha')$, so by Lemma~\ref{length-lemma} we have $\beta' \not\succeq \alpha'$ and consequently, $\abpr$ is not an $S$-$Block$. 
\end{proof}

Recall that the class of unbalanced split graphs is the union of the classes of NG-1 and NG-2 graphs.   In Theorem~\ref{NG-meet-join-thm},  we refine the results in Theorem~\ref{unbal-bal-meet-join-thm}  in the case that $k=j$.  The table in Figure~\ref{table-of-results-NG} summarizes the results.
% where ``$NG_1(n,k)$-only" refers to  the set of $S$-blocks in $NG_1(n,k)$ that are not in $NG_2(n,k)$ and similarly for ``$NG_2(n,k)$-only."

\begin{table}[h]
\resizebox{0.9\textwidth}{!}{\begin{minipage}{\textwidth}
        %\caption{Table caption}
{\renewcommand{\arraystretch}{1.5}

 \begin{tabular}{| c c c c   c  | } \hline
 $i$ &  $ \abone$ & $\abtwo$   & $\abone  \wedge \abtwo$  &  $ \abone  \vee \abtwo$ \\ \hline
  $1,2$& $NG_i(n,k)$ & $NG_i(n,k)$ &   $NG_i(n,k)$   & $NG_i(n,k)$  or  $\hat{1}$  \\
  $1,2$&$NG^*_i(n,k)$ & $NG^*_i(n,k)$ &   $NG^*_i(n,k)$  & $NG^*_i(n,k)$ or  $\hat{1}$  \\
  &$NG^*_1(n,k)$ & $NG^*_2(n,k)$ & $NG_1(n,k) \cap NG_2(n,k)$ &      $\hat{1}$   \\

%4. $|k-j|=1$ & NG-1 $\cup$ NG-2 & NG-1 $\cup$ NG-2 &  balanced        & $\hat{1}$ \\

%5.  $|k-j|>1$ & NG-1 $\cup$ NG-2 & NG-1 $\cup$ NG-2 &     $\hat{0}$  & $\hat{1}$ \\
\hline
\end{tabular} }

 \end{minipage}
 }

%\small

\caption{For $i = 1$ or $2$,  when   $\abone$ and $\abtwo$ belong to the amphoras specified in columns 2 and 3,  then their meet and join are specified in columns 4 and 5.}
%\caption{ The meet $\ab = \abone  \wedge \abtwo$  and join   $\abpr = \abone  \vee \abtwo$ of $S$-blocks  $\abone$ and $\abtwo$  where  len$(\alpha_1) =  $ len$(\alpha_2) $. }
 \label{table-of-results-NG}
\end{table}

Before proving Theorem~\ref{NG-meet-join-thm}, we introduce several technical lemmas  that help us determine when the meet and join of two partitions are  NG-1 or NG-2.
  
  \begin{lemma}
  Suppose $\gamma_1,\gamma_2 \in Dis_k(n)$, 
  with $\gamma_1 = (a_1, a_2, \ldots, a_k)$ and $\gamma_2 = (b_1, b_2, \ldots, b_k)$ and $a_k = b_k = 1$.    
  Then the last part of $\gamma_1 \vee \gamma_2$ is $1$ and the last part of $\gamma_1 \wedge \gamma_2$ is $1$. 
\label{last-part-lem}
  \end{lemma}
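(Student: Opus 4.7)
Both $\gamma_1 \wedge \gamma_2$ and $\gamma_1 \vee \gamma_2$ lie in $Dis_k(n)$ by Proposition~\ref{mj-length-prop}, so it makes sense to ask about their last parts. The whole argument will rest on Lemma~\ref{NG-one-lem}, which in effect says that inside $Dis_k(n)$ the property ``last part equals $1$'' is preserved upward under majorization.

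The join case is immediate: since $\gamma_1 \vee \gamma_2 \succeq \gamma_1$ and $\gamma_1$ has last part $1$, Lemma~\ref{NG-one-lem} forces the last part of $\gamma_1 \vee \gamma_2$ to be $1$.

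The meet case is the substantive one, because Lemma~\ref{NG-one-lem} only propagates the ``last part $=1$'' property upward, whereas $\gamma_1 \wedge \gamma_2$ sits below $\gamma_1$ and $\gamma_2$. The plan is to exhibit a lower bound $\mu$ of $\{\gamma_1,\gamma_2\}$ inside $Dis_k(n)$ whose last part is $1$; once such a $\mu$ is produced, $\mu \preceq \gamma_1 \wedge \gamma_2$, and a second application of Lemma~\ref{NG-one-lem} (with $\mu$ in the role of the smaller partition) finishes the proof. To construct $\mu$, consider the map $\Phi \colon (x_1,\ldots,x_{k-1},1) \mapsto (x_1-1,\ldots,x_{k-1}-1)$, which is a bijection between the partitions in $Dis_k(n)$ with last part $1$ and the partitions in $Dis_{k-1}(n-k)$. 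The target poset is nonempty because $n \ge \binom{k+1}{2}$ gives $n-k \ge \binom{k}{2}$. Since the $r$-th partial sum of $\Phi(\lambda)$ equals the $r$-th partial sum of $\lambda$ minus $r$, the bijection $\Phi$ preserves majorization. Setting $\mu = \Phi^{-1}(\tau_{k-1}(n-k))$, where $\tau_{k-1}(n-k)$ is the minimum element of $Dis_{k-1}(n-k)$ by Lemma~\ref{tau-min-lem}, produces a partition in $Dis_k(n)$ with last part $1$ that is majorized by every element of $Dis_k(n)$ with last part $1$; in particular, $\mu \preceq \gamma_1$ and $\mu \preceq \gamma_2$, as required.

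The only real obstacle is the one-sided nature of Lemma~\ref{NG-one-lem}: the join case falls out of a single application, but the meet case requires us to produce an explicit minimal witness, which we obtain by transporting $\tau_{k-1}(n-k)$ through the bijection $\Phi$.
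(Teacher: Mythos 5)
Your proof is correct and follows essentially the same strategy as the paper's: the join case is a one-line application of Lemma~\ref{NG-one-lem}, and the meet case is handled by stripping the last part, shifting down by $1$ into $Dis_{k-1}(n-k)$, lifting a lower bound back to $Dis_k(n)$, and applying Lemma~\ref{NG-one-lem} upward. The only difference is cosmetic: where the paper lifts the meet $\hat{\gamma_1}\wedge\hat{\gamma_2}$ computed in $Dis_{k-1}(n-k)$, you lift the global minimum $\tau_{k-1}(n-k)$, a looser but equally sufficient lower bound.
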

  
  \begin{proof} 
  Let $\alpha = \gamma_1 \vee \gamma_2$ and $\beta = \gamma_1 \wedge \gamma_2$.    By Proposition~\ref{mj-length-prop}, we know  $ \alpha, \beta \in Dis_k(n)$.  By Lemma~\ref{NG-one-lem}, since $\alpha\succeq \gamma_1$ and $\alpha, \gamma_1\in Dis_k(n)$, then the last part of $\alpha $ is 1. 
  
  Let $\hat{\gamma_1} = (a_1-1, a_2-1, \ldots, a_{k-1}-1)$ and $\hat{\gamma_2} = (b_1-1, b_2-1, \ldots, b_{k-1}-1)$ and note that $\hat{\gamma_1}, \hat{\gamma_2} \in Dis_{k-1}(n-k)$.  Let $\hat{\beta} = \hat{\gamma_1} \wedge \hat{\gamma_2}$. By Proposition~\ref{mj-length-prop}, $\hat{\beta} \in Dis_{k-1}(n-k)$.  Write $\hat{\beta} = (d_1, d_2, \ldots, d_{k-1})$, and let 
 $ \tilde{\beta}=(d_1+1, d_2+1 \ldots, d_{k-1}+1,1)$.  
Since adding 1 to each distinct part preserves distinctness, then $\tilde{\beta}\in Dis_k(n)$. It is straightforward to see that  $\gamma_1,\gamma_2\succeq \tilde{\beta}$. By the property of greatest lower bound, $\beta\succeq \tilde{\beta}$, and by Lemma~\ref{NG-one-lem}, the last part of $\beta $ is~1. 
  \end{proof}
  
  \begin{lemma}
  Suppose $\gamma_1,\gamma_2 \in Dis_k(n)$, 
  with $\gamma_1 = (a_1, a_2, \ldots, a_k)$ and $\gamma_2 = (b_1, b_2, \ldots, b_k)$ and $a_k \ge b_k \ge 2$.    
  Then the last part of $\gamma_1 \vee \gamma_2$ is at least 2 and the last part of $\gamma_1 \wedge \gamma_2$ is at least 2.
  \label{last-part-lem-2}
  \end{lemma}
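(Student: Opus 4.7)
The plan is to mirror the argument used for Lemma~\ref{last-part-lem}, but with the roles of meet and join exchanged. There the hypothesis forced the last part to be as small as possible, so the join could be handled directly by Lemma~\ref{NG-one-lem} while the meet required constructing a witness below $\gamma_1 \wedge \gamma_2$. Here the hypothesis forces the last part to be at least $2$, so the situation is dual: the meet is immediate and the join is what needs a construction.

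For the meet, I will let $\beta = \gamma_1 \wedge \gamma_2$. By Proposition~\ref{mj-length-prop} we have $\beta \in Dis_k(n)$, and $\beta \preceq \gamma_1$ with both lying in $Dis_k(n)$. Since the last part of $\gamma_1$ is $a_k \ge 2$, the second statement of Lemma~\ref{NG-one-lem} (applied with $\gamma_1$ playing the role of the upper element and $\beta$ the lower) gives that the last part of $\beta$ is at least $2$.

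For the join, set $\alpha = \gamma_1 \vee \gamma_2$. The idea is to produce an upper bound $\tilde{\alpha}$ of both $\gamma_1$ and $\gamma_2$ in $Dis_k(n)$ whose last part is at least $2$; then $\alpha \preceq \tilde{\alpha}$ and Lemma~\ref{NG-one-lem} will force the last part of $\alpha$ to be at least $2$ as well. To construct $\tilde{\alpha}$, I will define $\hat{\gamma_1} = (a_1 - 1, a_2 - 1, \ldots, a_k - 1)$ and $\hat{\gamma_2} = (b_1 - 1, b_2 - 1, \ldots, b_k - 1)$. Because subtracting $1$ from each part preserves the strict descent and, by the hypothesis $a_k, b_k \ge 2$, preserves positivity, both $\hat{\gamma_1}$ and $\hat{\gamma_2}$ lie in $Dis_k(n-k)$. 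Let $\hat{\alpha} = \hat{\gamma_1} \vee \hat{\gamma_2}$, which by Proposition~\ref{mj-length-prop} also lies in $Dis_k(n-k)$. Writing $\hat{\alpha} = (c_1, \ldots, c_k)$, I then put $\tilde{\alpha} = (c_1 + 1, c_2 + 1, \ldots, c_k + 1) \in Dis_k(n)$, whose last part is $c_k + 1 \ge 2$.

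The one thing that needs a short verification is that $\tilde{\alpha}$ really majorizes both $\gamma_1$ and $\gamma_2$; this follows because for every $r \le k$ the relation $\hat{\alpha} \succeq \hat{\gamma_1}$ gives $\sum_{i=1}^{r} c_i \ge \sum_{i=1}^{r}(a_i - 1)$, and adding $r$ to each side yields $\sum_{i=1}^{r}\tilde{\alpha}_i \ge \sum_{i=1}^{r} a_i$, and symmetrically for $\gamma_2$. Since $\alpha$ is the least upper bound, $\alpha \preceq \tilde{\alpha}$, and applying Lemma~\ref{NG-one-lem} (with $\tilde{\alpha}$ as the upper and $\alpha$ as the lower element, both in $Dis_k(n)$) completes the argument. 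The main obstacle, such as it is, is simply keeping track of which direction of Lemma~\ref{NG-one-lem} applies and confirming the $+1$ shift preserves membership in $Dis_k$ and the majorization inequalities, both of which are routine.
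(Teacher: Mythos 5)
Your proof is correct and takes essentially the same route as the paper's: the meet is handled directly by Lemma~\ref{NG-one-lem}, and the join by shifting both partitions down, joining in the smaller lattice, and shifting back up to produce an upper bound $\tilde{\alpha}$ with last part at least $2$. The only (immaterial) difference is that you subtract $1$ from each part where the paper subtracts $b_k-1$; both shifts work.
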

  
  \begin{proof} 
  Let $\alpha = \gamma_1 \vee \gamma_2$ and $\beta = \gamma_1 \wedge \gamma_2$.    By Proposition~\ref{mj-length-prop}, we know  $ \alpha, \beta \in Dis_k(n)$.  By Lemma~\ref{NG-one-lem}, since $\beta\preceq \gamma_1$ and $\beta, \gamma_1\in Dis_k(n)$, then the last part of $\beta $ is at least 2. 
  
  Let $\hat{\gamma_1} = \gamma_1 - (b_k-1)(1,1,\ldots, 1) =  (a_1-b_k + 1, a_2-b_k + 1, \ldots, a_{k}-b_k + 1)$ and $\hat{\gamma_2} = \gamma_2 - (b_k-1)(1,1,\ldots, 1) = (b_1-b_k + 1, b_2-b_k + 1, \ldots, 1)$ and note that $\hat{\gamma_1}, \hat{\gamma_2} \in Dis_{k}(n-k(b_k-1))$.  Let $\hat{\alpha} = \hat{\gamma_1} \vee \hat{\gamma_2}$. By Proposition~\ref{mj-length-prop}, $\hat{\alpha} \in Dis_{k}(n-k(b_k-1))$.  Write $\hat{\alpha} = (d_1, d_2, \ldots, d_{k})$, and let 
 $ \tilde{\alpha}= \hat{\alpha} + (b_k -1)(1,1, \ldots, 1) = (d_1+b_k - 1, d_2+b_k - 1 \ldots, d_{k} + b_k - 1)$, and note that $d_k + b_k -1 \ge 2$.  
Since adding $b_k-k+1$ to each distinct part preserves distinctness, then $\tilde{\alpha}\in Dis_k(n)$. It is straightforward to see that  $\gamma_1,\gamma_2\preceq \tilde{\alpha}$. By the property of the least upper  bound, $\alpha\preceq \tilde{\alpha}$, and by Lemma~\ref{NG-one-lem}, the last part of $\alpha $ is at least 2. 
  \end{proof}

\begin{Thm}
The results  for $\abone  \wedge \abtwo$ and $\abone  \vee \abtwo$ shown in Table~\ref{table-of-results-NG} are correct and the latter value is $  \hat{1}$ precisely when  $(\beta_1 \wedge \beta_2) \not\succeq ( \alpha_1 \vee \alpha_2) $.

\label{NG-meet-join-thm}
\end{Thm}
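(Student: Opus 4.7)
The plan is to work case by case through the table, in each case translating ``NG-$i$'' into the appropriate last-part condition on $\alpha$ and $\beta$ via Theorem~\ref{NG-block-thm}, and then applying the explicit formulas for block meet and block join from Theorem~\ref{poset-theorem}. Throughout, all the $S$-blocks under consideration lie in a single amphora $A(n,k)$ (since NG-1 and NG-2 $S$-blocks are unbalanced, and we are comparing pairs with the same $k = {\rm len}(\alpha_1) = {\rm len}(\alpha_2)$), so Theorem~\ref{unbal-bal-meet-join-thm} case (1) already tells us that $\abone \wedge \abtwo$ is an $S$-block in $A(n,k)$ and that $\abone \vee \abtwo$ is either in $A(n,k)$ or equals $\hat{1}$. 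What remains in each row is to track the last parts of $\alpha_1 \wedge \alpha_2$, $\alpha_1 \vee \alpha_2$, $\beta_1 \wedge \beta_2$, $\beta_1 \vee \beta_2$, and then apply Theorem~\ref{NG-block-thm} once more.

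For row 1 with $i=1$: since $\beta_1$ and $\beta_2$ both end in $1$, Lemma~\ref{last-part-lem} gives that $\beta_1 \vee \beta_2$ and $\beta_1 \wedge \beta_2$ both end in $1$. The second component of $\abone \wedge \abtwo$ is $\bjb$ and the second component of $\abone \vee \abtwo$ (when it is an $S$-block) is $\bmb$; in both cases the second component ends in $1$, so by Theorem~\ref{NG-block-thm} the result is $NG$-$1$. Row 1 with $i=2$ is symmetric, now using Lemma~\ref{last-part-lem-2} on the $\alpha$-components, since the first component of the meet is $\ama$ and of the join is $\aja$. Row 2 is handled identically: for $NG^*_1(n,k)$ every $\abone$ has \emph{both} $\alpha_1$ and $\beta_1$ ending in $1$ (the ``$\alpha$ ends in 1'' is the failure of NG-2), so Lemma~\ref{last-part-lem} applied twice controls both components of the meet and join simultaneously; and analogously for $NG^*_2(n,k)$ via Lemma~\ref{last-part-lem-2}.

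The genuinely new case, and the place where I expect the main obstacle, is row 3. Here $\abone \in NG^*_1(n,k)$ gives that $\alpha_1$ and $\beta_1$ both end in $1$, while $\abtwo \in NG^*_2(n,k)$ gives that $\alpha_2$ and $\beta_2$ both end in values $\geq 2$, so Lemmas~\ref{last-part-lem} and~\ref{last-part-lem-2} do not apply directly. Instead, I will use the one-sided monotonicity provided by Lemma~\ref{NG-one-lem}. For the meet $[\ama \mid \bjb]$, from $\alpha_2 \succeq \ama$ (both in $Dis_k(n)$ by Proposition~\ref{mj-length-prop}) and last part of $\alpha_2 \geq 2$, Lemma~\ref{NG-one-lem} gives last part of $\ama \geq 2$; similarly from $\bjb \succeq \beta_1$ and last part of $\beta_1 = 1$ we get last part of $\bjb = 1$. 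By Theorem~\ref{NG-block-thm}, the meet is in $NG_1(n,k) \cap NG_2(n,k)$, as claimed.

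For the join in row 3, the same Lemma~\ref{NG-one-lem} arguments applied in the opposite direction force last part of $\aja = 1$ (since $\aja \succeq \alpha_1$ and $\alpha_1$ ends in $1$) and last part of $\bmb \geq 2$ (since $\beta_2 \succeq \bmb$ and $\beta_2$ ends in $\geq 2$). But then $\bmb \succeq \aja$ would contradict Lemma~\ref{NG-one-lem} (a partition majorizing one with last part $1$ must itself end in $1$), so the required majorization fails and Theorem~\ref{poset-theorem}(ii) forces $\abone \vee \abtwo = \hat{1}$. This observation also confirms the final parenthetical clause of the theorem: the join equals $\hat{1}$ exactly when $\bmb \not\succeq \aja$, which is built into Theorem~\ref{poset-theorem}(ii) and needs no further argument beyond what has already been noted in each row.
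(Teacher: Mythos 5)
Your proof is correct and follows essentially the same route as the paper: the join computations, including the decisive row-3 observation that the last part of $\aja$ is $1$ while the last part of $\bmb$ is at least $2$, so that $(\bmb) \not\succeq (\aja)$ and the join is $\hat{1}$, match the paper's argument step for step, using the same Lemmas~\ref{last-part-lem}, \ref{last-part-lem-2}, and \ref{NG-one-lem} together with Theorems~\ref{NG-block-thm} and \ref{poset-theorem}. The only (valid) variation is in the meets, where the paper appeals to the amphora structure of $NG_i(n,k)$ and $NG^*_i(n,k)$ and to the downward closure in Theorem~\ref{NG-one-prop}, while you instead track the last parts of $\ama$ and $\bjb$ directly.
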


\begin{proof}
    
We first prove the results for $\abone \wedge \abtwo$ in the first three rows of the table.
By Theorems~\ref{Ak-ng1-thm} and \ref{NG-intersect-thm}, we know that $NG_i(n,k)$  and  $NG^*_i(n,k)$ are amphoras for  $i =1,2$, so as in the proof of  Theorem~\ref{unbal-bal-meet-join-thm}, this proves the results for $\abone \wedge \abtwo$ in rows 1 and 2 of the table.  In the third row, 
$\abone \wedge \abtwo  \in NG_i(n,k)$ because $NG_i(n,k)$ is an amphora, and then the result follows from Proposition~\ref{NG-one-prop}.

We next prove the results for the join in the first three rows of the table, and  in all remaining cases we denote $\abone \vee \abtwo$ by $\abpr$.
First  let $\abone$ and $\abtwo$  be elements of  $NG_1(n,k)$.     By Theorem~\ref{NG-block-thm}, the last part of $\beta_1$ is 1 (and likewise for $\beta_2$).   %Since NG-1 graphs are unbalanced, we know from Theorem~\ref{unbal-bal-meet-join-thm} that $\ab$ is an unbalanced $S$-block.  
%Since $\beta, \beta_1 \in Dis_k(n)$ and  $\beta \succeq \beta_1$, we know the last part of $\beta$ is 1 by Lemma~\ref{NG-one-lem}, thus $\ab$ is an NG-1 graph.  Recall that $\beta' = \beta_1 \wedge \beta_2$.  
By Lemma~\ref{last-part-lem}, the last part of $\beta'$ is 1.  Thus  if $\beta' \succeq \alpha'$, then $\abpr \in NG_1(n,k)$, and otherwise, $\abpr = \hat{1}$.  The result   for  $NG_2(n,k)$ is analogous and uses Lemma~\ref{last-part-lem-2}
to conclude that the last part  of $\alpha'$ is at least $2$. This completes the join result for row 1 of the table.

In row 2 of the table,  let  $\abone$ and $\abtwo$  be elements of  $NG^*_1(n,k)$.  
%Since $\beta, \beta_1 \in Dis_k(n)$ and  $\beta \succeq \beta_1$, we know the last part of $\beta$ is 1 by Lemma~\ref{NG-one-lem}, thus $\ab$ is an NG-1 graph.  Recall that $\beta' = \beta_1 \wedge \beta_2$.  
By Theorem~\ref{NG-block-thm}, the last part of each of $\alpha_1$, $\beta_1$, $\alpha_2$, and $\beta_2$ is 1, so by 
 Lemma~\ref{last-part-lem},   the last part of  $\alpha'$ and of $\beta'$ is 1.  Thus  if $\beta' \succeq \alpha'$, then $\abpr$ is  in $NG^*_1(n,k)$, and otherwise, $\abpr = \hat{1}$.  The result   for elements of $NG^*_2(n,k)$ is analogous and uses Lemma~\ref{last-part-lem-2}
to conclude that the last part   of  each of $\alpha'$  and $\beta'$ is at least $2$.

In row 3 of the table,  let $\abone \in NG^*_i(n,k)$ and $\abtwo \in NG_1(n,k) \cap NG_2(n,k)$.  Since  $\abone$ and $\abtwo$ are both elements of $A(n,k)$, Theorem~\ref{unbal-bal-meet-join-thm} implies that $\abpr \in A(n,k)$ when $\beta' \succeq \alpha'$ and $\abpr = \hat{1}$ otherwise.  In the former case, we know $\abpr \in NG^*_i(n,k)$ by Proposition~\ref{NG-one-prop}.

Finally, we prove the meet and join results for row 4 of the table.  We have $\abone, \abtwo \in A(n,k)$, so     Theorem~\ref{unbal-bal-meet-join-thm} implies that $\abone \wedge \abtwo \in A(n,k)$.  Furthermore,  Theorem~\ref{NG-one-prop} tells us that  each of the classes of NG-1 and NG-2 graphs are downward closed within unbalanced $S$-blocks, therefore 
$\abone \wedge \abtwo$ is  in $NG_1(n,k) \cap NG_2(n,k)$.    For $\abpr$, we know $(\alpha_1 \vee \alpha_2) \succeq \alpha_1$, and  the last part of $\alpha_1$ equals 1, so by Lemma~\ref{NG-one-lem}, the last part of $(\alpha_1 \vee \alpha_2) $ equals 1.  Similarly,   $(\beta_1 \wedge \beta_2) \preceq \beta_2$, and  the last part of $\beta_2$  is at least 2, so by Lemma~\ref{NG-one-lem}, the last part of $(\beta_1 \wedge \beta_2) $  is at least 2.  Now again using  Lemma~\ref{NG-one-lem}, we know $(\beta_1 \wedge \beta_2)  \not\succeq (\alpha_1 \vee \alpha_2) $, so $\abpr = \hat{1}$.
%  This means that $\abpr$ is neither NG-1 nor NG-2,  so it cannot be an unbalanced $S$-block.   Hence by Theorem~\ref{unbal-bal-meet-join-thm}, it must equal $\hat{1}$.
\end{proof}

\section{Pseudo-split graphs and NG-3 graphs}
 \label{sec-6}
 
Pseudo-split graphs and NG-3 graphs were introduced in  Section~\ref{sec-2} and now we study blocks $\ab$ that arise from these two families of graphs.   Recall from Definition~\ref{block-defn}
that $\beta \succeq_w \alpha$ for  blocks $\ab$ and that $\alpha$ and $\beta$ are partitions of integers $n_1$ and $n_2$ respectively where $n_2 \ge n_1$.

\subsection{Characterization results}

 Both NG-3 graphs and pseudo-split graphs have degree sequence characterizations, and in this section we give characterizations  of these classes based on the $\ab$ form of their degree sequences.  NG-3 graphs are not split graphs because they contain an induced 5-cycle.  However, an NG-3 graph $G$ becomes a split graph when the vertices in $A_G$ are removed, and this facilitates our characterization theorem.  Figure~\ref{fig-NG3} shows the Ferrers diagram for the degree sequence $ (7,6,4,4,4,4,4,1)$ of an NG-3 graph.  If the ten boxes marked with $\times$s are removed, the result is the Ferrers diagram of a split graph.

\begin{Thm}
Let $G$ be a graph with degree sequence $\pi$.  Graph $G$ is an NG-3 graph  if and only if the following four conditions all hold:

{\rm (i)}  $\alpha(\pi)$ ends in $2,1$

{\rm (ii)}  $\beta(\pi)$ ends in $4,3$

{\rm (iii)}  len$(\alpha(\pi)) = $ len$(\beta(\pi))$

{\rm (iv)}  $\tbeta \succeq  \talpha$ where $\tbeta$ is $\beta(\pi)$ with the smallest two parts removed and $\talpha$ is $\alpha(\pi)$ with the smallest two parts removed.

\label{NG3-thm}
\end{Thm}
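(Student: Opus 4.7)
My plan is to establish both directions by reducing the majorization condition (iv) to a Gale--Ryser realizability statement for the bipartite subgraph between $B$ and $C$ in the ABC-partition.

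For the forward direction, suppose $G$ is NG-3 and set $m:=m(\pi)=\chi(G)$ (Remark~\ref{m-ABC-rem}). Combining Theorem~\ref{NG-charac} and Definition~\ref{NG-123-def} forces $|A|=5$, every $A$-vertex to have degree exactly $m-1$, $|B|=m-3$, every $b_i\in B$ to have degree $m+1+c_i$ where $c_i$ denotes its number of $C$-neighbors, and every $v\in C$ to have degree at most $|B|=m-3$. Reading this off the Ferrers diagram yields $\alpha_i=d_i-i+1$ for $i\le m-3$ with $(\alpha_{m-2},\alpha_{m-1})=(2,1)$, and $\beta_j=(m+2-j)+|\{v\in C:d_v\ge j\}|$ for $j\le m-3$ with $(\beta_{m-2},\beta_{m-1})=(4,3)$, giving (i)--(iii) at once. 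For (iv), I would rewrite the partial sums, for $k\le m-3$, as
\[
\sum_{i=1}^{k}\talpha_i=\sum_{i=1}^{k}d_i-\binom{k}{2}, \qquad \sum_{j=1}^{k}\tbeta_j=k(m+2)-\binom{k+1}{2}+\sum_{v\in C}\min(d_v,k),
\]
and substitute $d_i=m+1+c_i$ to reduce $\sum_{j=1}^{k}\tbeta_j\ge\sum_{i=1}^{k}\talpha_i$ to the transparent bound $\sum_{v\in C}\min(d_v,k)\ge\sum_{i=1}^{k}c_i$; this holds because the right-hand side counts $B$-$C$ edges with $B$-endpoint in $\{v_1,\dots,v_k\}$ and each $v\in C$ supplies at most $\min(d_v,k)$ of them. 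Equality of the total sums follows from the edge-counting identity $\sum_{B}d_v-\sum_{C}d_v=(m-3)(m+1)$, so (iv) is a genuine (not merely weak) majorization.

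For the converse, assume (i)--(iv) hold for $\pi$. From Theorem~\ref{length-alpha}, Theorem~\ref{unbal-block-thm}, and the exact last parts of $\alpha$ and $\beta$, I deduce that $\pi$ has exactly five entries equal to $m-1$ (at positions $m-2$ through $m+2$), $m-3$ earlier entries with $d_i\ge m+1$, and remaining entries with $d_i\le m-3$. I then construct an NG-3 graph $G'$ realizing $\pi$ by taking $A$ to be a $5$-cycle, $B$ a clique of size $m-3$ completely joined to $A$, $C$ a stable set with no edges to $A$, and adding $B$-$C$ edges so that each $b_i$ has $c_i:=d_i-(m+1)$ $C$-neighbors and the $j$-th $C$-vertex has $d_{m+2+j}$ $B$-neighbors. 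By the Gale--Ryser theorem, such a bipartite graph exists iff $\sum_{i=1}^{k}c_i\le\sum_{v\in C}\min(d_v,k)$ for all $k\le m-3$, which is exactly the reformulation of (iv) derived above. Hence $\pi$ is realized by an NG-3 graph, and since NG-3 membership is determined by the degree sequence, $G$ itself is NG-3.

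The principal obstacle is the bookkeeping that identifies (iv) with the Gale--Ryser inequality; once that translation is in hand, each direction is a short application. A secondary subtlety is the claim in the converse that (i)--(iii) already pin down the consecutive-block structure of $\pi$ (five consecutive entries of value $m-1$, with larger entries above and entries bounded by $m-3$ below), which I will have to extract from the exact values of the last two parts of $\alpha$ and $\beta$ together with Theorem~\ref{length-alpha}.
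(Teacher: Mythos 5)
Your overall route is genuinely different from the paper's. The paper never invokes Gale--Ryser: in the forward direction it deletes the five edges of the $5$-cycle on $A_G$, observes that this removes a $5\times 2$ rectangle from $F(\pi)$ and leaves the Ferrers diagram of a split graph, and then obtains (iv) from Theorem~\ref{split-S-block-thm}; in the converse it subtracts $2$ from the five middle degrees, uses (iv) and Theorem~\ref{split-S-block-thm} to realize the modified sequence by a split graph $\tg$, locates a stable $5$-set on the relevant vertices of $\tg$, and adds back a $5$-cycle. Your forward direction --- computing $\alpha$ and $\beta$ directly from the $ABC$-structure and reducing (iv) to the counting bound $\sum_{v\in C}\min(d_v,k)\ge\sum_{i\le k}c_i$ together with the edge-count identity for equality of totals --- checks out and is a legitimate, more computational alternative.

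The converse, however, has a genuine gap. You assert that Theorem~\ref{length-alpha}, Theorem~\ref{unbal-block-thm} and the exact last parts of $\alpha$ and $\beta$ force $d_i\ge m+1$ for all $i\le m-3$. They do not: conditions (i)--(iii) only yield $d_{m-3}\ge m-1$ (distinctness of the parts of $\alpha$ gives $\alpha_{m-3}\ge 3$, i.e.\ $d_{m-3}\ge m-1$, and nothing in (i)--(iii) improves this). Concretely, the graphic sequence $\pi=(4,4,4,4,4,4,4)$ has $m=5$, $\alpha=(4,3,2,1)$, $\beta=(6,5,4,3)$, so it satisfies (i)--(iii), yet $d_{m-3}=d_2=4=m-1$. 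Without the bound $d_i\ge m+1$ the quantities $c_i=d_i-(m+1)$ can be negative and your Gale--Ryser instance is not even well posed. The missing input is condition (iv) itself: since $\talpha$ and $\tbeta$ have the same length $m-3$ and, by majorization, the same sum, comparing the first $m-4$ partial sums with the common total gives $\talpha_{m-3}\ge\tbeta_{m-3}$, and $\tbeta_{m-3}=\beta_{m-3}>\beta_{m-2}=4$ by distinctness of the parts of $\beta$; hence $\alpha_{m-3}\ge 5$ and $d_{m-3}\ge m+1$. With that step supplied (and the degenerate case $m=3$, where $B=\emptyset$, treated separately), your construction goes through.
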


\begin{proof}
First suppose that $G$ is an NG-3 graph,  let $\pi$ be its degree sequence, and let $\pi = (d_1, d_2, \ldots, d_r)$.   Let  $\alpha = \alpha(\pi)$,  $\beta = \beta(\pi)$, $m = m(\pi)$, and write $V(G) = \{v_1, v_2, \ldots, v_r\}$, where $deg(v_i) = d_i$ for each $i$.  
By Theorem 21 of \cite{ChCoTr16}, $d_i = m-1$ if and only if $i \in \{m-2, m-1, m, m+1, m+2\}$.    Now Remark~\ref{m-ABC-rem} and Definition~\ref{ABC-def}
 imply that the sets 
 $A_G = \{v_{m-2}, v_{m-1}, v_m, v_{m+1}, v_{m+2} \}$, 
$B_G = \{ v_1, v_2, \ldots, v_{m-3}\}$,  
$C_G = \{v_{m+3},v_{m+4}, \ldots, v_r\}$ form the $ABC$-partition of $G$.  Since $d_{m-1} = d_m = m-1$, we know 
that len$(\alpha) = $ len$(\beta) = m-1$.   In addition, $d_{m-2} = m-1$, so 
 the smallest two parts of $\alpha$ are  $2,1$.  Finally, using Theorem~\ref{NG-charac},
 there are no edges in $G$ between vertices in $A_G$ and vertices in $C_G$, so $d_{m+3} = deg(v_{m+3}) \le |B_G| = m-3$, and this means that the 
 smallest  two parts of $\beta$ are $4,3$.   It remains to show (iv).

Removing  the smallest  two parts of $\alpha$ and the smallest  two parts of $\beta$ corresponds to removing a $5 \times 2$ rectangle from $F(\pi)$, consisting of the boxes in 
the rows $m-2$ through $m+2$ that are in columns $m-2$ and $m-1$.
 This  transforms the degree sequence $\pi$ by removing 2 from each of  $d_{m-2}, d_{m-1}, d_m, d_{m+1}, d_{m+2}$,  corresponding to the five vertices in the set $A_G$.  The resulting degree sequence $\tpi$  is the degree sequence of a split graph, since removing the edges in $A_G$  (a 5-cycle) gives a split graph $H$ with  $KS$-partition as follows:    $K = B_G$ and $S = A_G \cup C_G$.  Thus $\tbeta \succeq \talpha$ by Theorem~\ref{split-S-block-thm}, and this establishes (iv) and completes the  proof of the forward direction.

To prove the converse,  we assume that graph $G$ with degree sequence $\pi$ satisfies the four conditions and show $G$ is an NG-3 graph.  It will suffice to show that $G$ has the same degree sequence as an NG-3 graph, since membership in the class of NG-3 graphs can be recognized  by degree sequences \cite{ChCoTr16}.

Let  $ (d_1, d_2, \ldots, d_r)$ be the degree sequence $\pi$ and let  $m = m(\pi)$.  Let  $\alpha = \alpha(G)$,  $\beta = \beta(G)$,  and  $k = len(\alpha)$.    By condition (iii), we have len$(\beta)= k$.  Since $\alpha$ ends in $2,1$, and $\beta$ ends in $4,3$,  we know $d_{k-1} = d_k = d_{k+1} = d_{k+2} = d_{k+3} = k$ and if $ r \ge k+4 $ then $d_{k+4} \le k-2$ (for example, see Figure~\ref{fig-NG3}).  By the definition of $m(\pi)$ we know $m = k+1$.  
Let $\tpi$ be the sequence $ \tilde{d_1}, \tilde{d_2},  \tilde{d_3},  \ldots , \tilde{d}_{r}$  defined as follows:
$\tilde{d_i} = d_i -2$ for $k-1 \le i \le k+3$ and otherwise, $\tilde{d_i} = d_i$.   	  Since $\pi$ is non-increasing  and $d_{k+4} \le k-2 \le d_{k+3} - 2$  (if $r \ge k+4$) we conclude that $\tpi$ is also  non-increasing.
%Note that the sequence $\tpi$ is non-increasing since $\pi$ is non-increasing  and   if $k+4 \le r$,  then $\tilde{d}_{k+4} =   d_{k+4} \le k-2  = \tilde{d}_{k+3}$.  
  The Ferrers diagram $F(\tpi)$ is obtained from $F(\pi)$ be removing the $5 \times 2$ rectangle consisting of the boxes in rows $k-1$ through $k+4$ that are in columns $k$ and $k-1$.  As a result, boxes on the diagonal in rows $k$ and $k-1$ are removed from $F(\pi)$ and len$(\alpha(\tpi)) = $ len$(\beta(\tpi)) = k-2$.

  By construction,  $\tbeta = \beta(\tpi)$ and $\talpha = \alpha(\tpi)$, and     by hypothesis, $\tbeta \succeq \talpha$.  Thus Theorem~\ref{split-S-block-thm}  and Definition~\ref{block-defn}
  imply that  $\tpi$ is the degree sequence of a split graph.    Let $\tg$ be a split graph with degree sequence $\tpi$ with vertex set $v_1, v_2, \cdots, v_{r}$, where $deg(v_i) = \tilde{d_i} $.
  In $\tpi$ we have  $\tilde{d}_{k-1} = k-2 = (k-1) - 1$ and $\tilde{d_{k}} = k-2 < k - 1$, so $m(\tg) = k-1 =  m-2$. 
  %  We know $len(\talpha) = len(\tbeta) = k-2$, so 
 % graph $\tg$ is an \emph{unbalanced} split graph by Theorem~\ref{unbal-block-thm}.  
  
  The partition $\tk = \{v_1, v_2,  \ldots, v_{k-1}\}$ and $\ts = \{v_{k}, v_{k+1}, \ldots, v_{r}\}$   is a $K$-max partition of $\tg$ (see, for example,
  Remark 14 of \cite{ChCoTr16}).  Since vertex $v_{k-1}$ has degree $k-2$ in $\tg$, all of its neighbors are in $K$ and thus $\{v_{k-1}, v_{k},  v_{k+1},  v_{k+2},  v_{k+3} \}$ forms a stable set in $\tg$.  
  % A $K$-max partition of $\tg$ will have $|K| = m(\tg)$ and an $S$-max partition will have $|K| = m(\tg) -1$. {\color{red}(because split graphs are perfect and  $m(\tg) = \chi(\tg) = \omega(\tg)$.) } Fix an $S$-max partition $\tk, \ts$ of $\tg$.  The set $\tk$ consists of $|\tk|$ vertices of highest degree, thus 
 % $\tk = \{v_1, v_2,  \ldots, v_{k-2}\}$ and $\ts = \{v_{k-1}, v_{k}, \ldots, v_{r}\}$  
  Starting with graph $\tg$, add five edges to form a $5$-cycle among the vertices $v_{k-1}, v_{k},  v_{k+1},  v_{k+2},  v_{k+3}$ and call the resulting graph $H$.  By construction, $H$ has the same degree sequence as $G$, and furthermore, 
   $H$ is an NG-3 graph  with the following $ABC$-partition:  $A_H = \{v_{k-1}, v_{k},  v_{k+1},  v_{k+2},  v_{k+3}\}$, $B_H = \{v_1, v_2,  \ldots, v_{k-2}\}$, and $C_H = \{v_{k+4}, v_{k+5}, \ldots, v_{r}\}$.   
  Since   membership in the class of NG-3 graphs is completely determined by degree sequence \cite{ChCoTr16}, we conclude that $G$ is an NG-3 graph.
\end{proof}

A more algebraic proof of Theorem~\ref{NG3-thm} can be obtained by using the summation condition of the characterization theorem for NG-3 graphs, which appears in \cite{ChCoTr16}.

We now characterize pseudo-split graphs from the $\ab$ form of their degree sequence.

\begin{Cor}
Let  $G$ be a graph with degree sequence $\pi$.  Graph $G$ is a pseudo-split graph  if  and only if  $\beta(\pi) \succeq \alpha(\pi)$ or the four conditions of Theorem~\ref{NG3-thm}
are satisfied.
\label{pseudo-charac}
\end{Cor}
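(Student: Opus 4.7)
The plan is to derive the corollary directly from Theorem~\ref{pseudo-split-union}(ii), which decomposes pseudo-split graphs as the disjoint union of split graphs and NG-3 graphs, together with the two degree-sequence characterizations already available: Theorem~\ref{split-S-block-thm} for split graphs, and Theorem~\ref{NG3-thm} for NG-3 graphs. Thus the corollary should require essentially no graph-theoretic work; it is a translation of Theorem~\ref{pseudo-split-union}(ii) into the language of $\alpha(\pi)$ and $\beta(\pi)$.

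For the forward direction, I would assume $G$ is a pseudo-split graph. By Theorem~\ref{pseudo-split-union}(ii), either $G$ is a split graph or $G$ is an NG-3 graph. In the first case, Theorem~\ref{split-S-block-thm} gives that $[\alpha(\pi)|\beta(\pi)]$ is an $S$-block, so in particular $\beta(\pi) \succeq \alpha(\pi)$. In the second case, Theorem~\ref{NG3-thm} yields the four conditions. For the converse, I would argue by cases in exactly the same way: if $\beta(\pi) \succeq \alpha(\pi)$, then since $\pi$ is already graphic (being the degree sequence of $G$) and the length condition of Definition~\ref{block-defn} is automatic from Theorem~\ref{length-alpha}, the pair $[\alpha(\pi)|\beta(\pi)]$ is an $S$-block, so by Theorem~\ref{split-S-block-thm} $\pi$ is the degree sequence of a split graph; since split graphs are characterized by degree sequence, $G$ itself is split and hence pseudo-split. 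If instead the four conditions of Theorem~\ref{NG3-thm} hold, then $G$ is NG-3 and hence pseudo-split.

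There is essentially no obstacle, but one small subtlety worth flagging is to confirm that the two alternatives in the statement correspond to the two parts of the disjoint union in Theorem~\ref{pseudo-split-union}(ii), and in particular that they cannot both hold. This is immediate: an NG-3 graph contains an induced $5$-cycle and is therefore not a split graph, so by Theorem~\ref{split-S-block-thm} its degree sequence does not satisfy $\beta(\pi) \succeq \alpha(\pi)$. Hence the ``or'' in the statement of the corollary is in fact exclusive, mirroring the disjoint union in Theorem~\ref{pseudo-split-union}(ii), and no case overlap has to be reconciled in the proof.
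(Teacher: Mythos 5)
Your proposal is correct and follows essentially the same route as the paper: both directions reduce to the disjoint-union decomposition of Theorem~\ref{pseudo-split-union}, with Theorem~\ref{split-S-block-thm} (plus Definition~\ref{block-defn} and Theorem~\ref{length-alpha}) handling the split case and Theorem~\ref{NG3-thm} handling the NG-3 case, together with the fact that membership in these classes is determined by degree sequence. The remark on the exclusivity of the ``or'' is a harmless extra observation not needed for the equivalence.
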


\begin{proof}
First suppose $G$ is a pseudo-split graph.  By Theorem~\ref{pseudo-split-union},
$G$ is either a split graph or an NG-3 graph.   
 In the first case, 
 Theorem~\ref{split-S-block-thm}   and Definition~\ref{block-defn} imply that $\beta(\pi) \succeq \alpha(\pi)$.  The second case follows from Theorem~\ref{NG3-thm}.  
 
 Conversely, if $\beta(\pi) \succeq \alpha(\pi)$, then by Theorem~\ref{length-alpha}, Definition~\ref{block-defn},  and 
 Theorem~\ref{split-S-block-thm},    we know that $\pi$ is the degree sequence of a split graph.  Since membership in the class of split graphs is determined by degree sequence, $G$ is a split graph.  The second case again follows immediately from Theorem~\ref{NG3-thm}.
\end{proof}

 %{\includegraphics[height=2in]{fig-5}}

\begin{figure}\begin{center}
\begin{tikzpicture}[scale=.8]

%\node(0) at (1.5,-.5) {$G_1$};

\draw[gray] (0,0)--(0,4);
\draw[gray]  (.5,0)--(.5,4);
\draw[gray]  (1,.5)--(1,4);
\draw[gray]  (1.5,.5)--(1.5,4);
\draw[gray]  (2,.5)--(2,4);
\draw[gray]  (2.5,3)--(2.5,4);
\draw[gray]  (3,3)--(3,4);
\draw[gray]  (3.5,3.5)--(3.5,4);

\draw[gray]  (0,4)--(3.5,4);
\draw[gray]  (0,3.5)--(3.5,3.5);
\draw[gray]  (0,3)--(3,3);
\draw[gray]  (0,2.5)--(2,2.5);
\draw[gray]  (0,2)--(2,2);
\draw[gray]  (0,1.5)--(2,1.5);
\draw[gray]  (0,1)--(2,1);
\draw[gray]  (0,.5)--(2,.5);
\draw[gray]  (0,0)--(.5,0);

\draw[thick, black] (0,4)--(2,4)--(2,1.5)--(0,1.5)--(0,4);
\draw [fill=cyan] (0,3.5) rectangle (.5,4);
\draw [fill=cyan] (.5,3) rectangle (1,3.5);
\draw [fill=cyan] (1,2.5) rectangle (1.5,3);
\draw [fill=cyan] (1.5,2) rectangle (2,2.5);

\node(0) at (1.5,-.5) {$F(\pi_5)$};
\node(0) at (1.75,.75) {$\times$};
\node(0) at (1.75,1.25) {$\times$};
\node(0) at (1.75,1.75) {$\times$};
\node(0) at (1.75,2.25) {$\times$};
\node(0) at (1.75,2.75) {$\times$};
\node(0) at (1.25,.75) {$\times$};
\node(0) at (1.25,1.25) {$\times$};
\node(0) at (1.25,1.75) {$\times$};
\node(0) at (1.25,2.25) {$\times$};
\node(0) at (1.25,2.75) {$\times$};
%\node(0) at (.8, -1.5) {$[\alpha(\pi_3)\;|\;\beta(\pi_3)]=[5,3,2\;|\;6,3,1]$};

\end{tikzpicture}\end{center}
%\end{figure}

% \begin{figure}
% {\includegraphics[height=2in]{fig-5}}

  \caption{The Ferrers diagram and central rectangle for an NG-3 graph $G$ with degree sequence $\pi_5 =(7,6,4,4,4,4,4,1)$.  The boxes marked with an $\times$ represent the edges in the 5-cycle induced by $A_G$.  }
  \label{fig-NG3}
\end{figure}
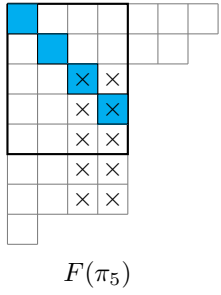

The following example shows that in contrast to Theorem~\ref{NG-one-prop}, the class of NG-3 graphs is neither upward nor downward closed under the majorization of Definition~\ref{block-maj-def}.

\begin{example} {\rm 
Let $\alpha = (10, 8, 6, 2, 1)$, $\beta = (11, 8, 5, 4, 3)$, $\alpha_1 = (10, 8, 5,3, 1)$, and  $\beta_1 = (11, 8, 6, 4, 2)$.  
It is not hard to check that  $[\alpha | \beta_1]$, $ [\alpha | \beta] $, and   $ [\alpha_1 | \beta] $  are all   blocks with $n_1 = 27$ and $n_2 = 31$ and that 
$[\alpha | \beta_1] \succ [\alpha | \beta]  \succ  [\alpha_1 | \beta] $.  
Using Theorem~\ref{NG3-thm}
one can check  that $\ab$ is an NG-3 graph while  $[\alpha_1 | \beta]$  and $ [\alpha | \beta_1]$ are not.  
}
\end{example}

\subsection{The amphora $NG_3(n,k)$}

 In Theorem~\ref{Ak-ng1-thm}
 we show that $NG_1(n,k)$ and $NG_2(n,k)$ are amphoras, and in this section we prove an analogous result for NG-3 graphs.
It follows from Theorem~\ref{NG3-thm}
and its proof that if $\pi$ is the degree sequence of an NG-3 graph and $\alpha(\pi) \in Dis(n_1)$ and $\beta(\pi) \in Dis(n_2)$  then $n_2 = n_1 + 4$.  When $\talpha$ is formed by removing the  parts  $2,1$  from $\alpha(\pi)$ and  $\tbeta$ is formed by removing the  parts $4,3$  from $\beta(\pi)$, then $\talpha$ and $\tbeta$ both partition  $n$ where $ n= n_1 - 3 = n_2 - 7$.  This motivates our definition of the poset $NG_3(n,k)$.

\begin{Def}
{\rm
The elements of poset  $NG_3(n,k)$ are the  NG-3 blocks  $\ab$ for which $\alpha$ has smallest parts $2$, $1$;  $\beta$ has smallest parts $4$, $3$; and the partitions $\talpha$, $\tbeta$ obtained by removing the smallest two parts of $\alpha$ and $\beta$ respectively satisfy the following:

\noindent
(i)  $\talpha, \tbeta  \in Dis_k(n)$,\\
(ii) The smallest part of $\talpha$ is at least $3$, and \\
(iii) The smallest part of $\tbeta$ is at least $5$.

The  poset relation in $NG_3(n,k)$ is the majorization given in Definition~\ref{block-maj-def}

}

\label{Ak-ng3-def}
\end{Def}

 Our first result establishes when $NG_3(n,k)$ is non-empty.

 \begin{Prop}
 The set $NG_3(n,k)$  is non-empty if and only if $n \ge 4k + \binom{k+1}{2}$.
 \label{NG-3-nonempty-prop}
 \end{Prop}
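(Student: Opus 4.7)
The plan is to reduce the existence question for $NG_3(n,k)$ to the non-emptiness of $Dis_k(n-4k)$ via the shift bijection $(b_1,\ldots,b_k) \leftrightarrow (b_1-4,\ldots,b_k-4)$ between partitions in $Dis_k(n)$ with smallest part at least~$5$ and arbitrary partitions in $Dis_k(n-4k)$.  This map obviously preserves distinctness and is reversible precisely because the smallest-part condition guarantees all images are positive.  Since $Dis_k(n-4k)$ is non-empty iff $n - 4k \ge \binom{k+1}{2}$ (as used in Lemma~\ref{tau-pr-max-lem}), this bijection immediately supplies the threshold stated in the proposition.

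For the forward direction I would take any $\ab \in NG_3(n,k)$ and look at $\tbeta$, which by Definition~\ref{Ak-ng3-def} lies in $Dis_k(n)$ and has smallest part at least~$5$.  Subtracting $4$ from each of its $k$ parts gives a partition in $Dis_k(n-4k)$; the non-emptiness of the latter forces $n \ge 4k + \binom{k+1}{2}$.  Conversely, assuming $n \ge 4k + \binom{k+1}{2}$, I fix any $\gamma = (g_1,\ldots,g_k) \in Dis_k(n-4k)$, put $\tilde\gamma := (g_1+4,\ldots,g_k+4)$, and set $\talpha = \tbeta := \tilde\gamma$.  Then I define $\alpha := (g_1+4,\ldots,g_k+4,2,1)$ and $\beta := (g_1+4,\ldots,g_k+4,4,3)$.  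Because the smallest part of $\tilde\gamma$ is at least~$5$, appending $2,1$ and $4,3$ preserves strict decrease and distinctness, so both $\alpha$ and $\beta$ are genuine partitions into distinct parts.

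The remaining work is to check that this $\ab$ really lies in $NG_3(n,k)$.  All four conditions of Theorem~\ref{NG3-thm} are immediate by construction: $\alpha$ ends in $2,1$; $\beta$ ends in $4,3$; $\text{len}(\alpha) = \text{len}(\beta) = k+2$; and $\tbeta \succeq \talpha$ holds trivially since the two sides are equal.  Thus $\ab$ is an NG-3 block, and the three bulleted conditions of Definition~\ref{Ak-ng3-def} hold by our choice of $\tilde\gamma$.  I do not expect a real obstacle here; the only point requiring care is noting that the smallest-part requirement of~$5$ on $\tbeta$ is strictly more restrictive than the smallest-part requirement of~$3$ on $\talpha$, so it is $\tbeta$ that dictates the binding threshold $n \ge 4k + \binom{k+1}{2}$ rather than the weaker $n \ge 2k + \binom{k+1}{2}$ one might naively expect from $\talpha$ alone.
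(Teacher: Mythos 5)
Your proposal is correct and follows essentially the same route as the paper: both directions come down to the observation that $\tbeta$ must be a partition of $n$ into $k$ distinct parts each at least $5$, and the witness for sufficiency is built by taking $\talpha=\tbeta$ with smallest part exactly $5,6,\ldots$ and appending $2,1$ and $4,3$. The only cosmetic difference is that you package the threshold computation as the shift bijection with $Dis_k(n-4k)$, whereas the paper sums $5+6+\cdots+(k+4)$ directly; the underlying argument is identical.
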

 
 \begin{proof}
 Suppose $\ab \in NG_3(n,k)$ and write $\beta = (b_1, b_2, \ldots, b_k, 4,3)$.  Then $b_k \ge 5$ and  $(b_1, b_2, \ldots, b_k) \in Dis_k(n)$, hence $b_k \ge 5$, 
 $b_{k-1} \ge 6$, $b_{k-2} \ge 7$,  \ldots, $b_{1} \ge k+4$.  So $n = \sum_{i=1}^k b_i \ge 5 + 6 + 7 + \cdots + (k+4) = 4k + (1 + 2 + \cdots + k) = 4k + \binom{k+1}{2}$.
 
 Conversely, if $n \ge 4k + \binom{k+1}{2}$ then write $n = 4k + \binom{k+1}{2} + x$.  For $b_k = 5$, $b_{k-1} = 6$, $b_{k-2} = 7$,  \ldots, $b_{2} = k+3$ and $b'_1 = k+ 4 + x$, we have $(b'_1, b_2, \ldots, b_k) \in Dis_k(n)$ using the above calculation.    Thus $\ab \in  NG_3(n,k)$, where 
 $\alpha = (b'_1, b_2, \ldots, b_k, 2,1)$ and $\beta = (b'_1, b_2, \ldots, b_k, 4,3)$, and $NG_3(n,k)$ is non-empty.
   \end{proof}

 \begin{lemma}
  If $\ab \in NG_3(n,k)$ then the smallest part of $\talpha$ is at least 5.
  \label{at-least-5-lem}
 \end{lemma}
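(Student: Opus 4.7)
The plan is to pass from the NG-3 block to an underlying NG-3 graph, and then read off the smallest part of $\tilde{\alpha}$ from the degree of a single specific vertex in $B_G$. Apart from a trivial boundary case, the argument is essentially a one-line count inside the Ferrers diagram.

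First I would dispose of the trivial case $k=0$, in which $\tilde{\alpha}$ is empty and the conclusion is vacuous, and henceforth assume $k \ge 1$. Since $[\alpha|\beta] \in NG_3(n,k)$ is (in particular) an NG-3 block, there is an NG-3 graph $G$ with degree sequence $\pi = (d_1, d_2, \ldots, d_r)$ such that $\alpha(\pi) = \alpha$ and $\beta(\pi) = \beta$. Set $m = m(\pi)$. The description of NG-3 degree sequences used in the proof of Theorem~\ref{NG3-thm} (ultimately Theorem~21 of \cite{ChCoTr16}, combined with Remark~\ref{m-ABC-rem}) gives the $ABC$-partition $B_G = \{v_1, \ldots, v_{m-3}\}$, $A_G = \{v_{m-2}, v_{m-1}, v_m, v_{m+1}, v_{m+2}\}$, $C_G = \{v_{m+3}, \ldots, v_r\}$. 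Moreover, $\mathrm{len}(\tilde{\alpha}) = m-3 = k$, so the assumption $k \ge 1$ forces $m \ge 4$ and in particular $v_{m-3}$ exists in $B_G$.

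Next I would bound $d_{m-3}$ using the NG-characterization (Theorem~\ref{NG-charac}). Since $G[B_G]$ is a clique on $m-3$ vertices and since every vertex of $B_G$ is adjacent to every vertex of $A_G$, the vertex $v_{m-3}$ has at least $(m-4) + 5 = m+1$ neighbors, giving $d_{m-3} \ge m+1$.

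Finally, the smallest part of $\tilde{\alpha}$ is $\alpha_{m-3}$ (after stripping off the two smallest parts $\alpha_{m-2}=2$ and $\alpha_{m-1}=1$). This part counts the boxes of row $m-3$ of $F(\pi)$ lying on or to the right of the main diagonal, so $\alpha_{m-3} = d_{m-3} - (m-3) + 1 \ge (m+1) - (m-3) + 1 = 5$, which is the claim. I do not anticipate any real obstacle; the only care required is the index bookkeeping around the central rectangle, and the key graph-theoretic input—that vertices of $B_G$ are forced to have degree at least $|B_G|-1+|A_G| = m+1$—is an immediate consequence of Theorem~\ref{NG-charac}.
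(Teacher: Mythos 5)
Your proof is correct, but it takes a genuinely different route from the paper's. The paper never returns to a graph: it works entirely inside the world of partitions, observing that $\talpha$ and $\tbeta$ both partition the same $n$ into exactly $k$ parts, that $\tbeta \succeq \talpha$ (from condition (iv) of Theorem~\ref{NG3-thm}, built into the definition of an NG-3 block), and hence that summing the first $k-1$ parts forces the last part of $\tbeta$ to be at most the last part of $\talpha$; since the last part of $\tbeta$ is at least $5$ by Definition~\ref{Ak-ng3-def}(iii), the conclusion is immediate. You instead realize the block by an NG-3 graph $G$, invoke the $ABC$-structure from Theorem~\ref{NG-charac} (clique on $B_G$, completely joined to the five vertices of $A_G$) to get $d_{m-3}\ge m+1$, and translate that into $\alpha_{m-3}=d_{m-3}-(m-3)+1\ge 5$. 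Your index bookkeeping checks out (the two smallest parts of $\alpha$ are indeed $\alpha_{m-2}=2$ and $\alpha_{m-1}=1$, so the smallest part of $\talpha$ is $\alpha_{m-3}$, and $k=m-3\ge 1$ guarantees $v_{m-3}$ exists), and the facts you cite about NG-3 degree sequences are exactly those the paper uses in proving Theorem~\ref{NG3-thm}. What each approach buys: the paper's argument is shorter, purely order-theoretic, and shows the slightly stronger fact that the last part of $\tbeta$ is bounded above by the last part of $\talpha$ for any such pair of partitions; yours gives a concrete structural explanation in the graph (the minimum-degree vertex of $B_G$ already sees $|B_G|-1+|A_G|$ neighbors) at the cost of re-importing the machinery of Theorem~21 of \cite{ChCoTr16} that the lemma was arguably designed to avoid.
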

 
 \begin{proof}
 By Definition~\ref{Ak-ng3-def}, we know $\talpha, \tbeta \in Dis_k(n)$.  Let $\talpha = (a_1, a_2, \ldots, a_k)$ and $\tbeta = (b_1, b_2, \ldots, b_k)$, thus $ n =  a_k + \sum_{i=1}^{k-1} a_i =  b_k + \sum_{i=1}^{k-1} b_i  = n$.    Since $\tbeta \succeq \talpha$ we have  $\sum_{i=1}^{k-1} b_i \ge   \sum_{i=1}^{k-1} a_i $ and thus $b_k \le a_k$.  Since $b_k \ge 5  $ in Definition~\ref{Ak-ng3-def}, we conclude that $a_k \ge 5$.
 \end{proof}
 
 For positive integers $n,k,r$, let $Dis_{(k,r)}(n)$ be the set of partitions of $n$ into $k$ distinct parts each of which is greater than $r$.  Define the function
 $f: Dis_{(k,r)}(n) \to Dis_k(n-kr)$, which subtracts $r$ from each part, as follows:  if $(a_1, a_2, \ldots, a_k) \in Dis_{(k,r)}(n)$ with $a_1 > a_2 > \cdots > a_k$ then $f(a_1, a_2, \ldots, a_k) = (a_1-r, a_2-r, \ldots , a_k-r)$.  It is not hard to see that $f$ is well-defined and preserves  majorization, that is, if $\sigma, \gamma \in Dis_{(k,r)}(n)$, then $\sigma \succeq \gamma$ if and only if $f(\sigma) \succeq f(\gamma).$
 
 \begin{Thm}
 Let $n \ge 4k + \binom{k+1}{2}$.   The set $NG_3(n,k)$  is an amphora under majorization.  Moreover,
 there is a bijection  that preserves majorization between    $NG_3(n,k)$  and the amphora  $A(n-4k, k)$ of split blocks.
   \end{Thm}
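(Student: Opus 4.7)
The plan is to define an explicit order-preserving bijection $\phi: NG_3(n,k) \to A(n-4k,k)$ and then transfer the amphora structure along it. For a block $\ab \in NG_3(n,k)$, let $\talpha$ and $\tbeta$ be the partitions in $Dis_k(n)$ obtained by removing the smallest two parts of $\alpha$ (namely $2,1$) and of $\beta$ (namely $4,3$), and set $\phi(\ab) = [f(\talpha)\,|\,f(\tbeta)]$, where $f$ is the map that subtracts $4$ from each part (as introduced just before Theorem~\ref{Ak-ng-only-prop}). By Lemma~\ref{at-least-5-lem} and Definition~\ref{Ak-ng3-def}(iii), the smallest parts of $\talpha$ and $\tbeta$ are both at least $5$, so $f(\talpha), f(\tbeta) \in Dis_k(n-4k)$. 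The inverse map $\psi$ adds $4$ to each part of both components and then appends $(2,1)$ to the first coordinate and $(4,3)$ to the second; the resulting partitions have distinct parts (since all preceding parts exceed $4$) and the condition $\beta \succeq_w \alpha$ is inherited from the inequality $\beta' \succeq \alpha'$ in $A(n-4k,k)$ by partial-sum bookkeeping.

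For order-preservation, the key observation is that truncating the fixed tails preserves majorization: if $\alpha_1 \succeq \alpha_2$ with both ending in $(2,1)$, then $\talpha_1$ and $\talpha_2$ both partition $n$, and for $r \le k$ the partial sums $\sum_{i=1}^r \talpha_1(i) \ge \sum_{i=1}^r \talpha_2(i)$ follow directly from the majorization of the original partitions, giving $\talpha_1 \succeq \talpha_2$ in $Dis_k(n)$. An analogous statement holds for $\tbeta$, and $f$ preserves majorization in both directions (as noted in the paragraph before Theorem~\ref{Ak-ng-only-prop}). Combining these, together with the analogous computation for $\psi$, establishes that $\phi$ is an order-isomorphism.

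It remains to establish the amphora structure. Taking the ambient poset of $NG_3(n,k)$ to be the set of all blocks $\ab$ with $\alpha \in Dis(n+3)$ and $\beta \in Dis(n+7)$ under Definition~\ref{block-maj-def}, I would propose as minimum the image under $\psi$ of $[\tau_k(n-4k)\,|\,\tau'_k(n-4k)]$ (the minimum of $A(n-4k,k)$ by Theorem~\ref{Ak-thm}) and as maximal antichain the image of $(Max)^{n-4k}_k$. The main obstacle is verifying the amphora property directly in the ambient: given a block $\ab$ sandwiched between this proposed minimum and some proposed maximal element, one must show $\ab \in NG_3(n,k)$. The key computation is that the first $k$ parts of the minimum's $\alpha$-coordinate, and of every maximum's $\alpha$-coordinate, both sum to $n$, forcing $\sum_{i=1}^k \alpha(i) = n$ for any sandwiched $\alpha$; hence $\alpha(k+1)+\alpha(k+2) = 3$, and distinctness pins the tail as $(2,1)$. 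A parallel argument using the $(k+1)$st partial sum for $\beta$ forces $\beta(k+1)=4$ and then $\beta(k+2)=3$. Once the tails are fixed, the remaining conditions of Definition~\ref{Ak-ng3-def} follow automatically from the block structure on $\ab$.
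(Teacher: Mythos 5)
Your proof is correct and follows essentially the same route as the paper's: both rest on the bijection that strips the fixed tails $(2,1)$ and $(4,3)$ and then subtracts $4$ from each remaining part via the map $f$, landing in $A(n-4k,k)$ and pulling the amphora structure back. The one place you go beyond the paper is your direct verification of the amphora property in an explicit ambient poset (pinning down the tails of a sandwiched block via the $k$th and $(k+1)$st partial sums); the paper simply transfers the structure through the order-isomorphism, so your added detail is sound but not a different method.
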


 \begin{proof}
 By Proposition~\ref{NG-3-nonempty-prop}, the set $NG_3(n,k)$ is non-empty.  For any $\ab \in NG_3(n,k)$, let $\talpha$, $\tbeta$  be obtained by removing the smallest two parts of $\alpha$ and $\beta$ respectively.  We know the smallest two parts of $\talpha$ and $\tbeta$ are at least 5 by Lemma~\ref{at-least-5-lem}
 and Definition~\ref{Ak-ng3-def}, thus $\talpha,\tbeta \in Dis_{(k,4)}(n)$.    The function $f$ defined above with $r=4$ is an order-preserving bijection from $Dis_{(k,4)}(n)$ to $Dis_k(n-4k)$, thus $f(\talpha), f(\tbeta) \in Dis_k(n-4k)$ and $f(\tbeta) \succeq f(\talpha)$.  Hence $[f(\talpha)|f(\tbeta)] \in A(n-4k,k)$ and the resulting bijection $\ab \to [f(\talpha)|f(\tbeta)] $ also preserves majorization by Definition~\ref{block-maj-def}.
 
 \end{proof}

 We illustrate this bijection in the following example.

 \begin{example}{\rm 
 Let $n= 38$ and  $k = 5$, so $n-4k = 20$.  Observe that     $20 + \binom{6}{2} = 35 \le 38$, so $NG_3(38,5)$ is non-empty.    There are three elements in $Dis_5(18)$, namely,  $(8,4,3,2,1) \succeq (7,5,3,2,1) \succeq (6,5,4,2,1)$.   These correspond to adding each of the partitions of 3 to the partition $(5,4,3,2,1)$.  Next we add $4$ to each component to find the possibilities for $\talpha$ and $\tbeta$ as follows: $\gamma_1 \succeq \gamma_2 \succeq \gamma_3$ where $\gamma_1 = (12, 8,7,6,5) $,  $\gamma_2 = (11,9,7,6,5) $ and $\gamma_3 = (10, 9,8,6,5)$.
%$(12, 8,7,6,5) \succeq (11,9,7,6,5) \succeq (10, 9,8,6,5)$.
We   append $2,1$ and $4,3$ to each of these to get the possibilities for $\alpha$ and $\beta$, respectively, and then combine these to get the elements of $NG_3(38,5)$ as shown in Figure~\ref{NG-3-fig}.
}
\label{ng3-ex}
 \end{example}
%\begin{itemize}
%\item minimum element:  

%$[10, 9, 8, 6, 5, 2, 1 \, | \, 12, 8, 7, 6, 5, 4,3]$.

%\item maximal elements:

%$[10, 9, 8, 6, 5, 2, 1 \,| \,10, 9, 8, 6, 5, 4,3 ]$,

%$[11, 9, 7, 6, 5, 2, 1 \, | \, 11, 9, 7, 6, 5, 4, 3]$.

%$[12, 8, 7, 6, 5, 2,1 \, | \, 12, 8, 7, 6, 5, 4, 3]$

%\item remaining elements:

%$[10, 9, 8, 6, 5, 2, 1 \, |  \,11, 9, 7, 6, 5, 4, 3]$

%$[11, 9, 7, 6, 5, 2, 1 \, | \, 12, 8, 7, 6, 5, 4, 3]$

%\end{itemize}

  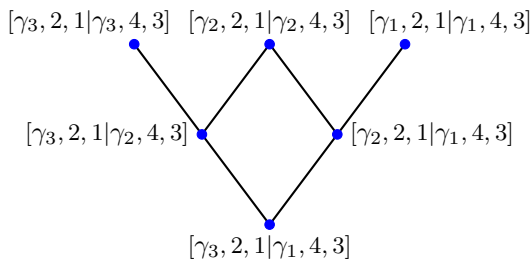
\begin{figure}[ht]
\begin{center}
\begin{tikzpicture}[scale=.6]
\draw[thick] (0,2)--(1.5,0)--(3,2)--(4.5,0)--(6,2);
\draw[thick] (1.5,0)--(3,-2)--(4.5,0);
\filldraw[blue]

%(0,1) circle [radius=3pt]
(1.5,0) circle [radius=3pt]
%(2,1) circle [radius=3pt]
(4.5,0) circle [radius=3pt]

(0,2) circle [radius=3pt]
%(1,0) circle [radius=3pt]
(3,2) circle [radius=3pt]
%(3,0) circle [radius=3pt]
(6,2) circle [radius=3pt]
%(5,0) circle [radius=3pt]
%(6,1) circle [radius=3pt]

(3,-2) circle [radius=3pt]
;

\node(0) at (-1,2.5)  { \small $ [ \mbox{\normalsize $\gamma_3$}, 2, 1|  \mbox{\normalsize $\gamma_3$},4, 3]$};

\node(0) at (3,2.5) {\small$[\mbox{\normalsize $\gamma_2$}, 2, 1|\mbox{\normalsize $\gamma_2$}, 4,3]$};
\node(0) at (7,2.5) {\small $[\mbox{\normalsize $\gamma_1$}, 2, 1|\mbox{\normalsize $\gamma_1$}, 4, 3]$};
%\node(0) at (6,1.5) {\small $ A(10,1)$};

\node(0) at (-.6,0) {\small$[\mbox{\normalsize $\gamma_3$}, 2, 1|\mbox{\normalsize $\gamma_2$}, 4, 3]$};
\node(0) at (6.6,0) {\small$[\mbox{\normalsize $\gamma_2$}, 2, 1|\mbox{\normalsize $\gamma_1$}, 4, 3]$};
\node(0) at (3,-2.5) {\small $[\mbox{\normalsize $\gamma_3$}, 2, 1|\mbox{\normalsize $\gamma_1$}, 4, 3]$};

  \end{tikzpicture}
  \end{center}
 % \end{figure}

 % \begin{figure}
  %{\includegraphics[height=1in]{fig-3}}
 
  \caption{The poset $NG_3(38,5)$ from Example \ref{ng3-ex}}
  \label{NG-3-fig}
  \end{figure}

%The unique minimum element of $NG_3(38,5)$ is $[10, 9, 8, 6, 5, 2, 1 \; | \; 12, 8, 7, 6, 5, 4,3]$.  The maximal elements are:
%$[10, 9, 8, 6, 5, 2, 1 \;| \;10, 9, 8, 6, 5, 4,3 ]$, $[12, 8, 7, 6, 5, 2,1 \; | \; 12, 8, 7, 6, 5, 4, 3]$ and $[11, 9, 7, 6, 5, 2, 1 \; | \; 11, 9, 7, 6, 5, 4, 3]$.  There are two additional elements of $NG_3(38,5)$,  
%namely  $[10, 9, 8, 6, 5, 2, 1 \; |  \;11, 9, 7, 6, 5, 4, 3]$ and $[11, 9, 7, 6, 5, 2, 1 \; | \; 12, 8, 7, 6, 5, 4, 3]$.

In future work, we expect to see more connections between the structure of graphs, based on their degree sequences, and the structure of lattices of partitions of integers.


\begin{thebibliography}{99}


   \bibitem{BlHuPlTu93} Z. Bl\'azsik, M. Hujter, A. Pluh\'ar, and Z. Tuza, 
   \newblock Graphs with no induced $C_4$ and $2K_2$, 
   \newblock \emph{Discrete Math.} {\bf 115} (1993), 51--55.


\bibitem{Br73} T. Brylawski,
\newblock The lattice of integer partitions,
\newblock \emph{Discrete Math.} {\bf 6} (1972), 201-219.


\bibitem{ChCoTr16} C. Cheng, K. L. Collins, and A. Trenk,  
\newblock Split graphs and Nordhaus--Gaddum graphs, 
\newblock \emph{Discrete Math.} {\bf 339} (2016), 2345--2356.


%\bibitem{ChHa73}  V. Chv\'{a}tal and P. Hammer,  
%  \newblock  Set-packing problems and threshold graphs, 
%  \newblock CORR 73-21, University of Waterloo, Canada, 1973. 


 \bibitem{ChHa77}  V. Chv\'{a}tal and P. Hammer,    
  \newblock Aggregation of inequalities in integer programming, 
  \newblock \emph{Ann. Discrete Math.}  {\bf 1} (1977), 145--162.


\bibitem{CoTr13}  K. L. Collins and A. Trenk,  
\newblock   Nordhaus--Gaddum Theorem for the distinguishing chromatic number,
\newblock \emph{Electron. J. of Combin.} {\bf 20} (3),  (2013) \#P46.



 \bibitem{CoTr21}   K. L. Collins and A.N. Trenk,
\newblock  Split graphs, 
\newblock in  L. Beineke, M. C. Golumbic, and R. Wilson, editors,
\newblock \emph{Topics in Algorithmic Graph Theory}, 
\newblock     Encyclopedia of Mathematics and its Applications {\bf 178},
\newblock  Cambridge University Press,  (2021), 189-206.


  

\bibitem{Fi66}   H.J. Finck,
{ On the chromatic number of a graph and its complements,} \emph{Theory of Graphs}, Proceedings of the 1966 Colloquium, Tihany, Hungary,   (1968), 99--113.

\bibitem{FoHa77}  S. F\"{o}ldes and P. Hammer,     Split graphs,
 \emph{Congr. Numer.}  {\bf 19} (1977), 311--315.
 
 
\bibitem{Ga22}  B. Ganter,
\newblock  Notes on integer partitions,
  \newblock \emph{Internat. J. of Approx. Reason.} {\bf 142},  (2022), 31-40.


 
\bibitem{Go80}
M. C. Golumbic,
\newblock {\em Algorithmic Graph Theory and Perfect Graphs},
\newblock Academic  Press, 1980.

\bibitem{HaSi81} P. Hammer and B. Simeone, 
\newblock The splittance of a graph, 
\newblock \emph{Combinatorica} {\bf 1}, (1981), 274-284.

\bibitem{LePh07}  M. Le and T. Phan,
 Strict partitions and discrete dynamical systems, {\it Theoretical Computer Science}, {\bf 389}
(2007),  82-90.


\bibitem{NoGa56}   E. A. Nordhaus and J. W. Gaddum,
{On complementary graphs},  \emph{Amer. Math. Monthly}, {\bf 63}  (1956), 175--177.


 \bibitem{MaPe95}
N. V. R. Mahedev and U. N. Peled,
\newblock {\em  Threshold Graphs and Related Topics},
\newblock  {\it Ann. Discrete Math. } {\bf 56},
\newblock North-Holland, 1995.

\bibitem{Me01}
R. Merris,
\newblock {\em  Graph Theory},
\newblock Wiley--Interscience,  2001.

\bibitem{Me03}
R. Merris,
\newblock Split graphs,
\newblock {\em  Europ. J. Combin.}
  {\bf 24}  (2003), 413--430.

\bibitem{MeRo05}
R. Merris and T. Roby,
\newblock The lattice of threshold graphs,
\newblock {\em  J.  Inequal.  Pure Appl. Math.}
  {\bf 6}  (2005), Article~2.


\bibitem{StTu08}   C.L. Starr and G.E. Turner III, Complementary Graphs and the Chromatic Number, \emph{Missouri J. of Mathematical Sciences,} {\bf 20} (1), (2008), 19--26.


\end{thebibliography}
\end{document}